\newcommand{\N}{\mathbb{N}}
\newcommand{\Z}{\mathbb{Z}}
\newcommand{\Q}{\mathbb{Q}}
\newcommand{\R}{\mathbb{R}}
\newtheorem{theorem}{Theorem}
\newtheorem{proposition}[theorem]{Proposition}
\newtheorem{claim}[theorem]{Claim}
\newtheorem{lemma}[theorem]{Lemma}
\newtheorem{corollary}[theorem]{Corollary}
\newtheorem{definition}[theorem]{Definition}
\newtheorem{example}[theorem]{Example}
\newtheorem{remark}[theorem]{Remark}
\newtheorem{observation}{Observation}
\begin{document}





\title{Scarf's algorithm and stable marriages} 
\author{Yuri Faenza\footnote{IEOR Department, Columbia University, yf2414@columbia.edu}}
\author{Chengyue He\footnote{IEOR Department, Columbia University, ch3480@columbia.edu}} 
\author{Jay Sethuraman\footnote{IEOR Department, Columbia University, jay@ieor.columbia.edu}} 
 
\affil{Columbia University} 
\maketitle



\begin{abstract}
    Scarf's algorithm gives a pivoting procedure to find a special vertex---a  \emph{dominating} vertex---in down-monotone polytopes. This paper studies the behavior of Scarf's algorithm when employed to find stable matchings in bipartite graphs. First, it proves that Scarf's algorithm can be implemented to run in polynomial time, showing the first positive result on its runtime in significant settings. Second, it shows an infinite family of instances where, no matter the pivoting rule and runtime, Scarf's algorithm outputs a matching from an exponentially small subset of all stable matchings, thus showing a structural weakness of the approach. 
\end{abstract}

\maketitle

%


%
%
%

\section{Introduction}

The theory of stable matchings has been studied for decades by the algorithms and operations research community. This effort has led to a variety of algorithms, which often give complementary approaches to the same problem. For instance, when we are given weights on the edges, a stable matching of maximum total weight in a bipartite (marriage) instance can be found using a combinatorial algorithm~\cite{irving1987efficient}, any linear programming solver~\cite{roth1993stable,rothblum1992characterization,vate1989linear}, or an interplay of the two~\cite{faenza2022affinely}. The problem of finding a stable matching in a marriage instance can then also be solved via multiple algorithms, including Gale and Shapley's Deferred Acceptance algorithm~\cite{gale1962college}, mechanisms using compensation chains~\cite{dworczak2021deferred}, and more (see~\cite{manlove2013algorithmics} for extensive references).

Scarf's lemma~\cite{scarf1967core} provides yet another algorithm for finding a stable matching in a marriage instance, with a number of 
distinct features that call for a deeper understanding. First, it is geometric in nature, while most other algorithms are combinatorial. Secondly, it applies under more general conditions, that go well beyond the classical marriage setting by Gale and Shapley. 
For instance, it has been used to design heuristics or (not necessarily polynomial-time) exact algorithms for matching markets with complex side constraints for which no alternative algorithms are known; examples of such side constraints are those arising from the presence of couples or budgets or the need to meet a proportionality requirement~\cite{biro2016matching, nguyen2021stability, nguyen2018near,nguyen2019stable}. Scarf's lemma has also been employed to show the existence of objects arising in the theory of graphs, matroids, posets, and games~\cite{aharoni2003lemma,aharoni1995fractional,biro2016matching}. Its connections with Sperner's lemma~\cite{kiraly2009note} and Nash equilibria~\cite{Scarf} have also been studied.

\smallskip

In essence, Scarf's lemma guarantees the existence of a vertex of a down-monotone polytope that is \emph{dominating} (see Section~\ref{sec:scarf-lemma} for definitions). The original proof by Scarf is algorithmic, as it is based on a pivoting rule that, starting from a certain vertex of the polytope, is guaranteed to terminate at a dominating vertex. When applied to the bipartite matching polytope, it guarantees the existence of a stable matching in a marriage instance. When applied to the fractional matching polytope, and combined with its half-integrality, it guarantees the existence of a stable partition in a roommate instance, a result originally obtained by Tan~\cite{tan1991stable} via a combinatorial argument. As discussed earlier, however, Scarf's lemma applies much more generally: for instance, when applied to a fractional hypergraph matching polytope, it establishes the existence of a fractional stable matching in a hypergraph~\cite{aharoni1995fractional}. Many allocation problems in which integral solutions may not exist can be modelled as the problem of finding a fractional stable matching in a hypergraph~\cite{biro2016matching}, which is then rounded to an integer, almost-feasible solution~\cite{nguyen2018near}.

The generality of Scarf's result comes however at a computational price: the problem of finding a dominating vertex of a down-monotone polytope is PPAD-Complete~\cite{kintali2013reducibility}. In particular, it remains PPAD-Complete on the hypergraph matching polytope~\cite{kintali2013reducibility}, even in quite restricted settings~\cite{csaji2021complexity,ishizuka2018complexity}.

\smallskip

These negative results frustrate the search for a proof of polynomial-time convergence of Scarf's algorithm in its most general terms. However, the broad applicability of Scarf's lemma calls for a more fine-grained analysis of the algorithm. To the best of our knowledge, in no relevant case has the polynomial-time convergence of Scarf's algorithm been established so far. This is in stark contrast with the many positive results known for other important geometric algorithms in combinatorics and combinatorial optimization, including the cutting plane method~\cite{chandrasekaran2016cutting} and pivoting procedures such as the simplex method~\cite{black2021simplex,orlin1997polynomial,tarjan1997dynamic}. 

The goal of this paper is to shed some light on the strengths and limits of Scarf's algorithm, by focusing on its application to the marriage model. The restriction to this model is motivated by multiple reasons. First, the marriage model is arguably one of the most relevant settings where Scarf's lemma holds true, and  applications of Scarf's algorithm in market design rely on extensions of the marriage model~\cite{nguyen2018near,nguyen2019stable,nguyen2021stability}. For some of those applications, we do not know if Scarf's algorithm can be implemented to run in polynomial time, and understanding their common special case is a natural first step. Second, computational experiments have shown that Scarf's algorithm can be used as a heuristic in some of those markets~\cite{biro2016matching}, hence understanding which underlying structure implies fast running time is an intriguing and important question. In particular, Biro and Fleiner~\cite{biro2016fractional} pose many open questions on the features of Scarf's algorithm when employed to find stable matchings. Some of these open questions are answered in this paper, see Section~\ref{sec:contribution} and Section~\ref{sec:related}. Last, matching problems have often proved to be at the ``proper level of difficulty''~\cite{lovasz2009matching} for developing tractable, yet non-trivial, theories and algorithms. 





Before delving into the details, we formally introduce Scarf's lemma and its associated algorithm in the next section and review them more in detail in Section~\ref{sec:ReviewScarf}. Our results are presented formally in Section~\ref{sec:contribution}, and related literature is discussed in Section~\ref{sec:related}. An overview of the techniques used to show our results is given in Section~\ref{sec:overview}, while full details are carried out in Sections~\ref{sec:algo-bipartite},~\ref{sec:perturbation},  and~\ref{sec:not-all-stable}. 

\addtocontents{toc}{\setcounter{tocdepth}{-10}} 

\subsection{Scarf's Lemma and Applications}\label{sec:scarf-lemma}


Consider a \emph{down-monotone} polytope $P\subset  \mathbb{R}^{(n+m)}$ in standard form, i.e.,  \begin{equation}\label{eq:Ax-leq-b}
P=\{x \in \mathbb{R}^{n+m}_{\ge 0} : Ax = b\}, \hbox{ with } A=(I|A'),\end{equation} 
where $I$ is the $n\times n$ identity matrix, $A \in \mathbb{Q}^{n \times (n+m)}_{\geq 0}$, $A'=(a'_{i,j})$, and $b\in\Q^n_+$. We say that $A$ as above is in \emph{standard form}. We call a basis (in the classical linear algebra sense) $B$ for $A$ that is feasible for~\eqref{eq:Ax-leq-b} an \emph{$(A,b)$ basis}. The definition depends on $b$ to emphasize feasibility. 

A matrix $C=(c_{i,j}) \in \Z^{n\times (n+m)}$ is an \emph{ordinal matrix} if it has distinct entries that satisfy $c_{i,i} < c_{i,k} < c_{i,j}$  for any $i\neq j \in [n]$, $k \in [n+m]\setminus [n]$, where we let $[n]=\{1,\dots,n\}$. As we see later, the only relevant information contained in $C$ is the relative order of entries in each row of $C$, so we assume w.l.o.g.~$c_{ij}=O(n+m)$ for all $i,j$.

Consider a set $D=\{j_1,\dots,j_n\}$ of $n$ columns of $C$. 
For every row $i \in [n]$, define the minimum element
\begin{equation}\label{eq:utilityvector}
    u_i=\min_{k\in[n]}c_{i,j_k}.
\end{equation}

Fix $j \in D$. By definition, $u_i\le c_{i,j}$ for any row index $i$. We succinctly write this set of relations by $u\le c_{j}$.
The set $D$ is called an \emph{ordinal basis} of $C$ if for every column $h\in[m]$, there is at least one $i \in [n]$ such that $u_i\ge c_{ih}$. The associated vector $u\in\Q^n$ is called the \emph{utility vector} of this ordinal basis. These concepts are illustrated in the example below.

\begin{example}\label{ex:utility-vector}
The matrix, $C$, below is an ordinal matrix and $D=\{\mathit{4},\mathit{5},\mathit{6}\}$ is an ordinal basis with $u=(1,1,1)^T$.
$$C=
\begin{pNiceArray}{ccc|ccc}[first-row]
    \mathit{1} & \mathit{2} & \mathit{3} & \mathit{4} & \mathit{5} & \mathit{6} \\
    0 & 5 & 4 & 2 & 3 & 1 \\
    5 & 0 & 4 & 1 & 2 & 3 \\
    5 & 4 & 0 & 3 & 1 & 2 
\end{pNiceArray}.$$
\end{example}

An $(A,b)$ basis that is also an ordinal basis of $C$ is called a \emph{dominating basis} for $(A,b,C)$. Given a dominating basis $B$ for $(A,b,C)$, the unique vertex $x$ of~\eqref{eq:Ax-leq-b} corresponding to $B$ is called a \emph{dominating vertex} for $(A,b,C)$. \emph{Scarf's lemma}~\cite{scarf1967core}, presented next, shows that such a vertex always exists.

\begin{theorem}[Scarf's Lemma]\label{thm:scarflemma}
Let $A\in \Q^{n \times (n+m)}_{\ge 0}$ be in standard form, $C \in \Z^{n \times (n+m)}$ be an ordinal matrix, and $b\in\Q^n_{+}$ such that~\eqref{eq:Ax-leq-b} is bounded. Then there exists a dominating vertex for $(A,b,C)$. \end{theorem}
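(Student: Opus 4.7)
The approach I would take is Scarf's original complementary pivoting algorithm, a close cousin of Lemke's algorithm for bimatrix games. The plan is to define a graph whose vertices are ``almost-dominating'' configurations---pairs $(B, D)$ where $B$ is an $(A,b)$-basis, $D$ is an ordinal basis of $C$, and $|B \triangle D|$ is small---and whose edges correspond to single pivot operations of two types. The key structural property is that this graph has maximum degree $2$, so it decomposes into paths and cycles. A distinguished starting configuration and all genuinely dominating bases correspond to endpoints of paths, so the existence of a dominating basis will follow by tracing the unique path emanating from the designated starting point.

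For initialization, I exploit the standard form $A = (I \mid A')$: the column set $[n]$ is automatically an $(A,b)$-basis, and the ordinal condition $c_{i,i} < c_{i,k}$ for all $k \in [n+m]\setminus[n]$ together with $c_{i,k}<c_{i,j}$ for $j\in[n]\setminus\{i\}$ forces $c_{i,i}$ to be the minimum entry of row $i$, so $[n]$ has utility vector $(c_{1,1}, \ldots, c_{n,n})$. This set fails to be an ordinal basis as soon as $m\geq 1$, because for every $h \in [n+m]\setminus[n]$ and every $i$ one has $c_{i,i} < c_{i,h}$, so no row dominates $h$. I would pair $[n]$ with a canonical auxiliary column to form a starting endpoint, then define two pivot operations: a \emph{cardinal pivot} that fixes $D$ and performs a simplex-style basis exchange on $A$, where boundedness of the polytope rules out pivoting to infinity, and an \emph{ordinal pivot} that fixes $B$ and, after removing the column $j \in D$ at which some row's utility is attained, uses the distinctness of entries of $C$ plus its ordinal structure to uniquely identify the incoming column that restores the ordinal basis property.

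To close the argument, I would verify that at every non-endpoint configuration exactly one cardinal and one ordinal pivot are available, yielding degree $2$, while dominating bases and the initial configuration each admit only one pivot, giving degree $1$. The path from the initial endpoint must then terminate at another degree-$1$ vertex, which by the canonical choice of initialization cannot be another trivial starting configuration and hence is a dominating basis. The main obstacle is proving the degree-$2$ property, i.e., showing that the two pivot types interlock correctly on both sides of a configuration; this requires securing non-degeneracy of the cardinal pivot, which I would handle by a standard lexicographic perturbation of $b$, while the corresponding ordinal non-degeneracy is provided directly by the assumed distinctness of the entries of $C$.
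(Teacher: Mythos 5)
Your proposal is essentially Scarf's original complementary-pivoting proof, which is exactly the argument the paper sketches in Section~\ref{sec:scarf-lemma} and reviews in Section~\ref{sec:ReviewScarf} (deferring details to~\cite{scarf1967core}): the same initialization $B=[n]$, $D=\{j_0,2,\dots,n\}$ with $j_0$ the auxiliary column maximizing $c_{1k}$, the same alternation of uniquely determined cardinal and ordinal pivots, the same degree-at-most-two parity argument to exclude cycling and force termination at a dominating basis, and the same perturbation of $b$ to handle degeneracy. The approach is correct and matches the paper's.
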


Theorem~\ref{thm:scarflemma} was originally proved using a pivoting algorithm, described next.

\smallskip

\paragraph{Scarf's algorithm. }
Scarf's algorithm starts by letting $B=\{1,\cdots,n\}$ and $D=\{j_0,2,3,\cdots,n\}$, where $j_0$ is selected from the columns $k\in \{n+1,\dots,m\}$ so as to maximize $c_{1k}$. That is, 
$c_{1j_0}=\max_{k>n} c_{1k}$ (in Example~\ref{ex:utility-vector}, $j_0=\mathit{5}$ and $D=\{\mathit{5},\mathit{2},\mathit{3}\}$).
Note that $|B\cap D|\geq n-1$, $B$ is an $(A,b)$ basis, and $D$ is an ordinal basis of $C$. 

These properties are satisfied throughout the algorithm: at the beginning of each iteration, we have an $(A,b)$ basis $B$ and an ordinal basis $D$ with $|B \cap D|\geq  n-1$. If $|B\cap D|=n$, then $B=D$ and the algorithm halts and outputs the dominating basis $B$. Note that, in this case, $B$ is a dominating basis for $(A,b,C)$. Else, we let $\{j_t\}= D\setminus B$ and perform the following:
\begin{enumerate}
    \item[1]\emph{Cardinal pivot:} A column $j_\ell \in B \cap D$ is chosen, so that $B':=B\setminus \{j_\ell\} \cup \{j_t\}$ is a basis for $(A,b)$. That is, $j_t$ enters and $j_\ell$ leaves $B$.
    \item[2]\emph{Ordinal pivot:} A column $j^* \notin D$ is chosen, so that $D':=D \setminus \{j_\ell\} \cup \{j^*\}$ is an ordinal basis of $C$. That is, $j_\ell$ leaves and $j^*$ enters $D$.
\end{enumerate}
The algorithm then proceeds to the next \emph{iteration}, setting $B=B'$ and $D=D'$. Note that the change of bases in cardinal pivoting coincides with the classical pivoting operation employed by, e.g., the simplex algorithm. In particular, by basic linear algebra, there is always at least a feasible choice for $j_\ell$~\cite{bertsimas1997introduction}---even though, if the polytope is degenerate, multiple choices may be possible. \emph{Ordinal pivoting} is, at every step, uniquely determined~\cite{scarf1967core}. 

In non-degenerate polytopes, Scarf~\cite{scarf1967core} proved that the algorithm always terminates. For degenerate polytopes, a reduction to the non-degenerate case shows that a pivoting rule leading to convergence exists, but to the best of our knowledge, this pivoting rule is polytope-specific and does not match any of the ``standard'' pivoting rules investigated, e.g., in the literature on the simplex method~\cite{bland1977new}.
\smallskip

\paragraph{Applications to Stable Marriage.} As mentioned in the introduction, when specialized to certain polytopes, Scarf's lemma can be used to establish the existence of specific combinatorial objects. In this section, we review the setting that is relevant for our paper, that is, the stable marriage model. 

Let $M=\{m_1,\dots,m_k\}$ denote a set of men, and $W=\{w_1,\dots,w_k\}$ denote a set of women. Without loss of generality, we assume that $|M|=|W|=k$ and that every possible pair $(m_i,w_j)$ is acceptable: for any man $m\in M$, there is a strict linear order over $W\cup\{m\}$ such that $m$ is ordered last ($m$ being matched to $m$ means that $m$ is left unmatched). It is well-known that every stable marriage instance can be transformed to an instance with these properties without loss of generality~\cite{gusfield1989stable}. We refer to this order as the \emph{preference order of $m$} and denote it by $\succ_{m}$. The preference order for every woman $w$ is defined analogously and it is denoted by $\succ_{w}$. A \emph{matching} is a set of disjoint pairs $\mu\subset M\times W$. Given a matching $\mu$, a pair $(m,w)\in M\times W$ is a \emph{blocking pair} if both $w\succ_m \mu(m)$ and $m\succ_w \mu(w)$,  where for $v \in M\cup W$, we denote by $\mu(v)$ the partner of $v$ in $\mu$ if such a partner exists, or $\mu(v)=v$ otherwise. A matching $\mu$ is called \emph{stable} if no blocking pair exists. It has been observed, e.g., in Bir\'o and Fleiner~\cite{biro2016matching}, that Scarf's lemma can be used to show the existence of stable matchings in marriage instances.

\begin{theorem}\label{thm:existence-sm}
Consider an instance ${\cal I}$ of the marriage model defined over a bipartite graph $G(V,E)$, and let~\eqref{eq:Ax-leq-b} describe the (classical) matching polytope of $G(V,E)$, with $A'$ being the node-edge incidence matrix of $G$ and every component of $b$ being $1$. There exists a matrix $C$ such that every dominating vertex for $(A,b,C)$ is the characteristic vector of a stable matching.
\end{theorem}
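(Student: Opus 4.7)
The strategy is to construct an ordinal matrix $C$ whose rows encode each vertex's preferences, apply Scarf's lemma (Theorem~\ref{thm:scarflemma}), and translate the ordinal basis condition into stability. The core idea is that for any non-basic edge $e=(v,w)$, the ordinal condition should force one of $v$ or $w$ to prefer its current partner in the matching to deviating to the other endpoint.

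I would index the rows of $C$ by the vertices of $G$ and the columns by the slacks $s_v$ (first $n$ columns) followed by the edges (remaining $m$ columns). For each vertex $v$, I set $c_{v,v}$ to be the smallest entry of its row, reflecting that being unmatched is the least preferred option for $v$ (since all potential partners are acceptable). For each edge $e=(v,w)$ incident to $v$, I set $c_{v,e}$ to an integer that increases with $v$'s preference for $w$. For each edge $e'$ not incident to $v$, I set $c_{v,e'}$ strictly larger than every $c_{v,e}$ with $e$ incident to $v$, so that $v$ alone cannot trivially dominate $e'$ through row $v$. Finally, the entries $c_{v,s_{v'}}$ for $v' \neq v$ are the largest in row $v$. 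Small perturbations ensure that all entries are distinct while preserving these orderings, and one verifies that the resulting $C$ is an ordinal matrix in the sense of Section~\ref{sec:scarf-lemma}.

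Invoking Theorem~\ref{thm:scarflemma}, since $P$ is bounded (each coordinate lies in $[0,1]$), $A$ is in standard form, and $C$ is ordinal, there exists a dominating vertex $x^*$ corresponding to a dominating basis $D$. By integrality of the bipartite matching polytope, $x^*$ has all coordinates in $\{0,1\}$ and hence equals the characteristic vector of a matching $\mu$, with $s^*_v=1$ iff $v$ is unmatched in $\mu$. To prove stability, I would suppose that $e=(v,w) \notin \mu$ is a blocking pair, so $w \succ_v \mu(v)$ and $v \succ_w \mu(w)$. Applying the ordinal condition to the non-basic column $e$ yields some row $i$ with $u_i \geq c_{i,e}$. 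Because $c_{i,e}$ for $i \notin \{v,w\}$ is, by construction, strictly greater than every value $u_i$ can take (which is bounded by the preference of $i$'s matched edge in $D$, or by $c_{i,s_i}$ if $i$ is unmatched), we must have $i \in \{v,w\}$; the inequality $u_i \geq c_{i,e}$ then translates into $i$ preferring $\mu(i)$ over the other endpoint of $e$, contradicting the blocking-pair assumption.

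The hard part is degeneracy. The matching polytope is typically degenerate (a perfect matching corresponds to a vertex with $n/2$ zero-valued basic variables), so the same vertex may admit bases that include non-matching edges as degenerate basic columns; such a basis evades the ordinal check for those edges and could, in principle, allow an unstable matching to appear as a dominating vertex. The cleanest resolution is a small lexicographic perturbation of $b$: the perturbed polytope is non-degenerate so that each dominating basis uniquely determines a matching, the clean argument above applies, and a continuity/compactness argument then recovers the claim for the original right-hand side $b=(1,\dots,1)$.
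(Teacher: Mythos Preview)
Your construction of $C$ and the core stability argument are correct and match the standard approach (the paper does not prove this theorem itself but cites Bir\'o and Fleiner~\cite{biro2016matching}; the explicit $C$ in Section~\ref{sec:matrixdesign} is a special case of your construction).

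However, your degeneracy concern is unfounded, and the perturbation detour is unnecessary. The inequality you already use---$u_i \le c_{i,\mu(i)}$ for every vertex $i$---holds for \emph{any} dominating basis $D$, degenerate or not, simply because the matched edge (or loop) $\mu(i)$ lies in the support of the vertex $x^*$ and hence in $D$. With your construction of $C$, this forces the row $i$ witnessing $u_i \ge c_{i,e}$ to lie in $\{v,w\}$ for every valid edge $e=(v,w)$, \emph{whether or not} $e\in D$: if $e\in D$, the witness is the unique row with $u_i=c_{i,e}$ supplied by Proposition~\ref{prop:app1.1}, and the same reasoning shows $i\in\{v,w\}$ since $c_{i,e}$ is ``large'' for $i\notin\{v,w\}$ while $u_i\le c_{i,\mu(i)}$ is ``small''. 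In either case you obtain $c_{i,\mu(i)} \ge u_i \ge c_{i,e}$, i.e., $i$ weakly prefers $\mu(i)$ to the other endpoint of $e$, so $e$ is not blocking. Nothing is ``evaded'' when $e$ is a degenerate basic column; the ordinal condition is still satisfied there (with equality), and the rest of your argument applies verbatim.
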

\smallskip

For $A,b,{\cal I}$ as in Theorem~\ref{thm:existence-sm}, we say that $A,b$ are \emph{induced} by instance ${\cal I}$. 


\subsection{Our Contributions}\label{sec:contribution}

\paragraph{Polynomiality of Scarf's Algorithm in the Marriage Model.} As our first result, we show that Scarf's algorithm on an input $(A,b,C^*)$, where $A,b$ are as in Theorem~\ref{thm:existence-sm} and $C^*$ is a specific choice among the matrices $C$ that make Theorem~\ref{thm:existence-sm} true, can be implemented to run in polynomial time. In particular, as the matching polytope is degenerate, we find a pivoting rule (see Algorithm~\ref{alg:pivoting}) to control the cardinal pivots.
Bir\'o and Fleiner~\cite{biro2016fractional} ask whether Scarf's algorithm terminates in polynomial time for matching games, and our result gives a positive answer to their question in the special case of stable marriage games. 


\begin{theorem}\label{main:bipartite-poly}
For any instance ${\cal I}$ of the marriage model over a bipartite graph $G(V,E)$, there exists a cardinal pivoting rule such that Scarf's algorithm runs in polynomial time on input $(A,b,C^*)$, where $\{x \in \R^m_{\geq 0} : Ax =  b\}$ describes the matching polytope of $G$ and $C^*$ is a specific matrix that make Theorem~\ref{thm:existence-sm} true. In particular, the output will be the characteristic vector of a stable matching. 
\end{theorem}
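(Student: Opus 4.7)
The plan is to analyze Scarf's algorithm on the matching polytope by giving each basis pair $(B,D)$ a combinatorial interpretation, designing a cardinal pivoting rule that makes the interaction with the forced ordinal pivot monotone, and bounding the length of the resulting trajectory.

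First, I would specify $C^*$ explicitly: for each row corresponding to a vertex $v$, the smallest entry is assigned to the own-slack $s_v$, followed by the edges incident to $v$ ranked by $v$'s preference (most preferred gets the next-smallest entry), followed by edges not incident to $v$ in some canonical order, followed by the slacks of other vertices as the largest entries (as demanded by the definition of an ordinal matrix). A direct check shows that a dominating basis $B=D$ under this $C^*$ encodes a stable matching: the utility vector entry $u_v$ records the $C^*$-value of $v$'s current partner, and the dominating condition $u\le c_j$ for each non-basis edge $j=(m,w)$ is exactly the non-blocking condition for $(m,w)$.

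Next, I would read off the combinatorial meaning of the two pivots. An $(A,b)$-basis $B$ encodes a matching together with a choice of ``phantom'' slack columns for vertices that are matched but still need a basis column (the source of degeneracy), while an ordinal basis $D$ encodes a utility profile $(u_v)_{v\in V}$ together with one ``pending'' column outside $B$. Because the ordinal pivot is uniquely determined by $j_\ell$ leaving $D$, its combinatorial effect is to lower the utility of exactly one vertex to its next preferred option, mirroring a rejected-proposal step of a deferred-acceptance-like process. The cardinal pivot, by contrast, is generally not unique. I would prescribe a rule that, whenever the entering column is an edge $e=(m,w)$, removes the slack of a designated endpoint whenever this preserves basis feasibility, and otherwise removes a column from a carefully identified alternating structure in $B$; I would show this rule is always executable by standard linear-algebra arguments and keeps $B$ synchronized with an actual matching.

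Finally, polynomial termination will follow from a monovariant argument: the profile $(u_v)_{v\in V}$ never improves in any coordinate, because the forced ordinal pivot strictly worsens one coordinate while the chosen cardinal pivot does not subsequently undo this worsening. Since each individual $u_v$ can take at most $\deg(v)+1$ values (one per edge incident to $v$, plus the own-slack value), the profile can change at most $\sum_v(\deg(v)+1)=O(|E|)$ times, giving a polynomial iteration count with polynomial work per iteration. The main obstacle is verifying the monovariant across the combined cardinal-then-ordinal step, particularly ruling out ``silent'' cardinal pivots that leave the matching vertex unchanged but whose ordinal companion could in principle reverse a previous downgrade. I expect this delicate case analysis, depending crucially on the proposed pivoting rule's treatment of degenerate bases, to be the technical heart of the proof.
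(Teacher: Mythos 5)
Your high-level template (explicit $C^*$, a combinatorial reading of the bases, a tailored cardinal pivot rule, a potential function) matches the paper's, but the specific monovariant you propose is false, and this breaks the convergence argument. An ordinal pivot never changes ``exactly one'' utility coordinate: as recorded in Lemma~\ref{lem:app:changeofu}, it \emph{increases} $u_{i_\ell}$ (the row whose disliked column left) and simultaneously \emph{decreases} $u_{i_r}$ (the row of the reference column), leaving the rest unchanged. Hence the profile $(u_v)_{v\in V}$ is not componentwise monotone in either direction, and ``never improves in any coordinate'' cannot hold along the trajectory. The paper's fix is structural, not a delicate case analysis of silent pivots: one proves that every almost-feasible ordinal basis has a \emph{separator} $m_i$ (Proposition~\ref{prop:separator}), classifies each basis column as man- or woman-disliked, and chooses the cardinal pivot (Lemma~\ref{lem:cardpivot}) so that the leaving column is either the separator's loop or a woman-disliked edge. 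This forces the coordinate that \emph{decreases} in the subsequent ordinal pivot to always belong to a man, so the correct potential is the aggregate pair $\bigl(i,\sum_{w\in W}u_w\bigr)$ of~\eqref{eq:app-convergence-bi}, which increases lexicographically; no per-coordinate monotonicity is available. Without the separator/disliked machinery your rule (``remove the slack of a designated endpoint'') is not well enough specified to guarantee which side loses utility, and the $O(|E|)$ iteration bound has no valid justification.

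Two secondary but substantive issues. First, your $C^*$ has the preference orientation reversed: in the paper's construction the \emph{most} preferred incident edge receives the \emph{largest} value in its block (so that $u_i\ge c_{ih}$ reads ``$i$ weakly prefers its current assignment to $h$''), and the dominating condition is that \emph{no} column strictly exceeds $u$ componentwise, i.e.\ for each non-basis edge $(m,w)$ \emph{at least one} of $m,w$ does not want to deviate — not ``$u\le c_j$'', which would demand this of both endpoints and of all other rows. Second, the ``canonical order'' you leave unspecified for non-incident entries is not a free choice: the decreasing left-to-right ordering of the $\mathcal{L}$ and $\mathcal{XL}$ blocks is exactly what creates the separator structure and makes Lemma~\ref{lem:mloopordpivot} (the separator advancing from $m_i$ to $m_{i+1}$) true, so it must be pinned down for the argument to go through.
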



As a building block to the proof of Theorem~\ref{main:bipartite-poly}, we develop an understanding of pivoting operations connecting ordinal and feasible bases that may be visited by Scarf's algorithm.

\paragraph{Polynomial-time Convergence Through a Perturbation of the Polytope.}

We show that polynomial-time convergence can also be achieved by perturbing the underlying bipartite matching polytope as to make it non-degenerate, see Section~\ref{sec:perturbation}. This result can be of computational interest, since on non-degenerate polytopes the behaviour of Scarf's algorithm is univocally determined, hence no ad-hoc pivoting rule needs to be implemented. Moreover, this latter theoretical result substantiates the empirical observation that Scarf's algorithm converges fast in standard perturbations of the bipartite matching polytope~\cite{biro2016fractional}. 



\paragraph{Limits of Scarf's Approach: Expressing Stable Matchings.}  Recent work has focused on understanding the ``expressive power'' of algorithms for constructing stable matchings.  For instance, while Gale and Shapley's algorithm~\cite{gale1962college} can be used to produce at most two stable matchings, the deferred acceptance algorithm with compensation chains can output all stable matchings~\cite{dworczak2021deferred}. Bir\'o and Fleiner~\cite{biro2016fractional} ask whether Scarf's algorithm can also be used to output all stable matchings of a given instance. 
As our next result, we show that the expressive power of Scarf's algorithm is also weak, since it will, in general, output only an exponentially small subset of stable matchings. 

Our result requires $C$ to be \emph{consistent}~\cite{aharoni2003lemma}. This is a common assumption that, roughly speaking, states that $C$ needs to characterize $\succ$ correctly (see Section~\ref{sec:overview-expressing-sm} for a formal definition). We emphasize that all the ordinal matrices $C$ discussed in 
Theorems~\ref{thm:existence-sm},~\ref{main:bipartite-poly}, and extensions to the roommate setting, to  hypergraphic matching~\cite{aharoni2003lemma}, and to matching with couples~\cite{biro2016matching}, satisfy consistency.  More generally, all applications of Scarf's algorithm to stable matching problems we are aware of employ a consistent matrix $C$. Now let $dom(A,b,C)$ be the family of dominating vertices of $(A,b,C)$ and, for a marriage instance ${\cal I}$, let ${\cal S}(I)$ be the family of stable matchings of ${\cal I}$. 

\begin{theorem}\label{thm:Scarf-is-weak}
There is a universal constant $c>1$ and, for infinitely many $n \in \N$, a marriage instance ${\cal I}_n$ with $n$ agents such that, for every $A,b$ induced by ${\cal I}_n$ and matrix $C$ consistent with ${\cal I}_n$, we have:
$$\frac{|{\cal S}({\cal I}_n)|}{|dom(A,b,C)|}=\Omega(c^n).$$
 \end{theorem}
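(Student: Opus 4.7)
The plan is to exhibit a family of instances $\mathcal{I}_n$ with $n = 4k$ agents built from $k$ independent ``$2 \times 2$ modules.'' Each module $i$ consists of $\{m_i, m'_i, w_i, w'_i\}$ with cyclic within-module preferences ($m_i : w_i \succ w'_i$, $m'_i : w'_i \succ w_i$, $w_i : m'_i \succ m_i$, $w'_i : m_i \succ m'_i$) admitting exactly two stable sub-matchings: ``aligned'' $\{(m_i, w_i), (m'_i, w'_i)\}$ and ``swapped'' $\{(m_i, w'_i), (m'_i, w_i)\}$. Cross-module preferences rank every within-module partner above every cross-module one. Since no cross-module pair can block, the stable matchings decompose as independent products of the two options, giving $|\mathcal{S}(\mathcal{I}_n)| = 2^k = 2^{n/4}$.

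I next translate the ordinal-basis condition into a combinatorial constraint on matchings. Using $A = (I \mid A')$, a basis of the matching polytope at a perfect matching $\chi^\mu$ has the form $B = \mu \cup E_0 \cup S$, where $E_0$ is a set of non-matching edges with $\mu \cup E_0$ a forest, and $S$ picks exactly one slack per connected component. For any consistent $C$ (each row ordered by preferences on incident edges, with all non-incident entries strictly above), the utility vector satisfies $u_v = c_{v,v}$ if $v \in S$ and $u_v = \min_{e \in N(v)} c_{v, e}$ otherwise, where $N(v) = \{\mu(v)\} \cup (E_0 \cap \delta(v))$. The ordinal condition then reduces to: for every non-basis edge $e = (v_1, v_2)$, there exists $v \in \{v_1, v_2\} \setminus S$ that prefers every element of $N(v)$ to the other endpoint of $e$. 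This derivation uses only the ordering properties of consistency and is therefore uniform in $C$.

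Applying the reduction, both uniform stable matchings (all aligned, all swapped) are dominating with $E_0 = \emptyset$: take $S = W$ for all-aligned and $S = M$ for all-swapped, and the conditions are immediate. For a \emph{mixed} stable matching, the within-module singleton dominating-endpoint constraints force $U \cap (\text{module } i)$ to equal the men-side (aligned) or women-side (swapped). Every cross-module edge running between an aligned module's men-side and a swapped module's women-side then has both endpoints in $S$, and no vertex in $U$ can dominate it; so mixed matchings cannot be dominating with $E_0 = \emptyset$.

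The main obstacle is to rule out rescues by $E_0 \neq \emptyset$. A casual inspection of the naive modular construction reveals ``star-shaped'' $E_0$'s (a few edges sharing a single cross-module vertex) that enlarge the components of $\mu \cup E_0$, expand $U$, and can in principle allow mixed matchings to be dominating --- I confirmed this failure mode can occur in a 2-module example. To defeat such rescues I would fine-tune the cross-module preferences --- e.g., using carefully asymmetric or cyclic rankings --- so that any cross-module edge placed in $E_0$ strictly drops the utility $u_v$ of some incident vertex below the threshold required to dominate another cross-module edge that thereby becomes undominatable. Combined with the forest-budget bound $|E_0| \leq n/2 - 1$ against the $\Theta(k^2)$ cross-module problematic edges in a balanced mixed matching, this should restrict the dominating vertices to the two uniform stable matchings, yielding $|\mathrm{dom}(A, b, C)| \leq 2$ and hence the ratio $|\mathcal{S}(\mathcal{I}_n)| / |\mathrm{dom}(A, b, C)| \geq 2^{n/4 - 1} = \Omega(c^n)$ with $c = 2^{1/4}$.
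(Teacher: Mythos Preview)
Your proposal is incomplete, and you acknowledge this: you confirm that in a $2$-module example, nonempty $E_0$ can rescue a mixed matching as a dominating vertex, and you only sketch (``would fine-tune'', ``should restrict'') a repair without carrying it out. The forest-budget count $|E_0|\le n/2-1$ against $\Theta(k^2)$ problematic edges is not an argument either, since a single well-placed $E_0$-edge can change the domination status of many cross-module edges at once.

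More importantly, your modular construction is structurally at odds with the paper's route. The paper proves a clean lemma (Theorem~\ref{thm:intermediatematching}): any dominating basis contains at least one loop $(v_\ell,v_\ell)$; if the matching $\mu$ is \emph{intermediate} (not $v$-optimal for any agent), then the edge from $v_\ell$ to her partner in the man-optimal matching $\mu_0$ is strictly preferred by both endpoints to anything in $B$, so its column beats the utility vector and $B$ cannot be ordinal. This reduces the problem to exhibiting an instance with exponentially many stable matchings but only two that are $v$-optimal for some $v$. The paper achieves this (Example~\ref{ex:Irving-leather}) by arranging that the unique rotation exposed at $\mu_0$ is a single long cycle through \emph{all} agents, so that eliminating it already strips every man of his best stable partner; after that, $k/2$ disjoint rotations give $2^{k/2}$ further stable matchings, all intermediate.

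In your construction, by contrast, every mixed matching is $v$-optimal for the agents in the modules that got their preferred side, so none of them is intermediate and the paper's lemma says nothing about them. You are therefore forced into the direct basis-by-basis analysis you attempt, which is genuinely harder and which you have not closed. Moreover, any ``fine-tuning'' of cross-module preferences strong enough to block all $E_0$-rescues risks creating cross-module blocking pairs and destroying the product structure that gives you $2^k$ stable matchings in the first place. The paper's long-rotation idea is exactly what resolves this tension: it couples the agents just enough to make all but two matchings intermediate, while still leaving many independent rotations one level down.
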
 

Hence, to cover all stable matchings of a marriage instance with dominating vertices of consistent matrices, we may need exponentially many matrices. In particular, for each consistent $C$, there are exponentially many stable matchings that cannot be obtained via Scarf's algorithm, thus answering a question of~\cite{biro2016fractional}.  

\subsection{Related Literature}\label{sec:related}
As mentioned in the introduction, Scarf's lemma has been used to show the existence of objects such as cores and fractional cores~\cite{biro2016fractional,scarf1967core}, strong fractional kernels~\cite{aharoni1995fractional}, and fractional stable solutions in hypergraphs~\cite{aharoni2003lemma} and stable paths~\cite{haxell2008fractional}. In particular, Bir\'{o} and Fleiner~\cite{biro2016fractional} investigate Scarf's algorithm for stable matching problems and extensions, posing many intriguing question on the features of Scarf's algorithm, some of which are investigated (and answered) in this paper, see Section~\ref{sec:contribution}. 
For some more complex markets, many $2$-stage rounding algorithms~\cite{nguyen2021stability,nguyen2018near,nguyen2019stable} use Scarf's algorithm as a first step to find a fractional point, which is then often rounded in a second step to a feasible or quasi-feasible solution.

To the best of our knowledge, no result on the polynomial-time convergence of Scarf's algorithm was known prior to this work. In contrast, it was known that the problem of finding a dominating vertex is PPAD-Complete, even in a restricted setting such as hypergraph matching~\cite{csaji2021complexity,ishizuka2018complexity,kintali2013reducibility}. PPAD~\cite{papadimitriou1994complexity} is a complexity class containing certain problems whose associated decision version always has a positive answer, but whose solution may be non-trivial to find. PPAD-Complete problems include the computation of Nash equilibria~\cite{chen2006settling,daskalakis2009complexity} and of fixed point of Brouwer functions~\cite{papadimitriou1994complexity}, among others, hence the existence of a polynomial-time algorithm that finds a solution for PPAD-Complete problems would be surprising. Moreover, examples are known where Scarf's algorithm's path is uniquely defined and requires an exponential number of steps (independently of any complex theoretic assumption), even if the corresponding dominating vertex can be found in polynomial time~\cite{EDMONDS20101281}. 

A crucial component of our approach is an understanding of the feasible and ordinal bases of the bipartite matching polytope that can be visited by Scarf's algorithm. In contrast, most of the polyhedral literature on matching polytopes has focused on studying vertices and conditions for their adjacency only (see, e.g.,~\cite{balinski1970maximum,sanita2018diameter}) in order to, e.g., bound the diameter, or to investigate classical pivoting operations~\cite{behrend2013fractional}. Another polyhedral approach to stable matching problems studies properties of the stable marriage or roommate polytopes, i.e., the convex hull of stable matchings, focusing on topics such as their linear descriptions~\cite{eirinakis2014polyhedral,faenza2022affinely,roth1993stable,rothblum1992characterization,teo1998geometry,vate1989linear} and diameters~\cite{eirinakis2014one}.

\addtocontents{toc}{\setcounter{tocdepth}{1}}

\section{Technical Overview}\label{sec:overview}
\subsection{Polynomiality of Scarf's Algorithm in the Marriage Model}\label{sec:hl-poly}

Consider a marriage instance $\mathcal{I}:=(G(V,E),\succ)$, where $G$ is a bipartite graph with nodes $V=M\cup W$, and $M$ and $W$ are the set of men and women, respectively. $E=E^\ell \cup E^v$, where $E^\ell$ is the set of \emph{loops} -- one for every vertex --  while $E^v$ is the set of \emph{valid} edges (i.e., not loops). Recall that we assume that the underlying graph is complete. $|V|=|E^\ell|=n=2k$ and $|E^v|=m=k^2$. For $v\in V$ and $e\in E$, we say $v$ is \emph{incident} to $e$ if $v\in e$. $\succ$ is a preference system such that for every $v\in V$, $\succ_v$ strictly ranks all the edges incident to $v$ and the unique loop on $v$ such that $e\succ_v (v,v)$ for every $e$ incident to $v$. We let $A=(I|A')$, where $A'$ is the incidence matrix of the graph, and the identity matrix $I$ corresponds to the loops (slack variables). We let $b=(1,1,\dots,1)^T\in\R^n$.

With $A, b$ as above,~\eqref{eq:Ax-leq-b} defines the matching polytope of $G(V,E)$. To a fixed graph $G(V,E)$ we associate a matrix $C$ such that each dominating vertex of $(A,b,C)$ is a stable matching of $G(V,E)$, see Theorem~\ref{thm:C-valid-for-marriage}. A set of columns of $A$ or $C$ (in particular, a feasible or ordinal basis) will also be interpreted as a set of edges of $G$.



We give here a high-level view of the behaviour of Scarf's algorithm on a generic iteration associated with a pair $(B,D)$ (cardinal basis and ordinal basis, respectively), and a combinatorial interpretation of intermediate vertices found on the way. 
We denote the valid (resp.~loop) edges in $B$ by $E_B^v$ (resp.~$E_B^\ell$), and define the graph (opti, with loops) $G_B=(V,E_B^v \cup E_B^\ell)$. Similarly, $G_D$ (resp.~$E_D$) is the subgraph (resp.~the subset) containing all and only  the edges in $D$. The pair associated after performing a cardinal and an ordinal pivoting starting from $(B,D)$ is denoted by $(B',D')$.


\smallskip

We next introduce definitions and properties of objects associated to a given iteration. The reader can follow those in Figure~\ref{fig:bipartite-evencycle} and Figure~\ref{fig:bipartite-example} for illustration.

\begin{enumerate}

\item {\bf Structure of basis $B$: Forest with single loops}. $F=(V,E_B^v)$ is a forest, and each connected component of $F$ contains exactly one edge from $E_B^\ell$ (i.e.,  a loop, see Lemma~\ref{lem:forest}). We say therefore that $G_B$ has a \emph{forest with single loops} structure. This fact is probably folklore, but we could not find a reference and for completeness we give a proof in Appendix~\ref{sec:app:proof-basis}.

\item {\bf Separator}. Define $m_{k+1}=m_1$. Each of the ordinal bases visited by the algorithm will have a unique separator. This is an agent $m_i \in M$ that, among other properties, satisfies the following: 
    \begin{enumerate}
    \item[(i)] the separator $m_i$ is incident to both the loop $e_i$ and some valid edge(s);
    \item[(ii)] for $i'=2,\dots, i-1$, $m_{i'}$ is incident to at least one valid edge of $G_D$ and no loop; 
    \item[(iii)] for $i'=i+1,\dots, k$, $m_{i'}$ is only incident in $G_D$ to the loop $e_{i'}$. 
    \end{enumerate}  
    See Proposition~\ref{prop:separator} and Section~\ref{sec:convergence}.


\item {\bf Utility vector.} Recall that the utility $u \in \Q^n$ is defined as $u_i=\min\{c_{ij} : j \in D\}$ $\forall i \in [n]$. If an ordinal pivot starting from a basis with associated utility $u$ leads to a basis associated to $u'$ with $u_i'>u_i$ for some $i \in [n]$, we say that the utility of $i$ \emph{increases}. 

\item {\bf $v$-disliked edge}, where $v \in M \cup W$. We say that $e_i \in E_D$ is \emph{$v_j$-disliked} w.r.t.~$D$ if $c_{ij}=u_i$.  This defines a bijection between edges in an ordinal basis $D$ and agents, see Definition~\ref{def:disliked}.
\end{enumerate}

\smallskip

In each iteration, we execute the following steps:

\begin{enumerate}
    \item {\bf Identification of man- and woman-disliked edges.} We identify the set of \emph{woman-} (resp.~\emph{man-}) \emph{disliked} edges as the set of edges that are $v$-disliked, for some agent $v$ that is a woman (resp.~man).
    
    \item {\bf Cardinal Pivoting:} Let ${\Lambda}$ be the connected component of $G_B$ containing the separator $m_i$.
    Recall that the next basis has the form $B'=B \cup \{j_t\}\setminus \{j_\ell\}$ and $G_{B'}$ is a forest with single loops. We show that one of the following holds:
    \begin{enumerate}
        \item\label{it:case-1-hl-bip} $\Lambda$ is a connected component of $G_{B \cup \{e_{j_t}\}}$ containing an even cycle $C$. Then $e_{j_\ell}$ can be chosen to be an edge of $C$. See Figure~\ref{fig:bipartite-evencycle}.
        \begin{figure}[h!]
    \centering
    \includegraphics{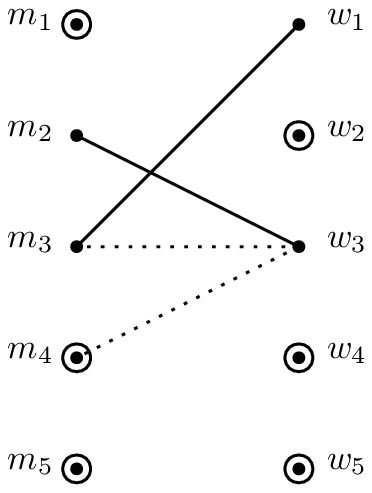}\hspace{1.5cm}
    \includegraphics{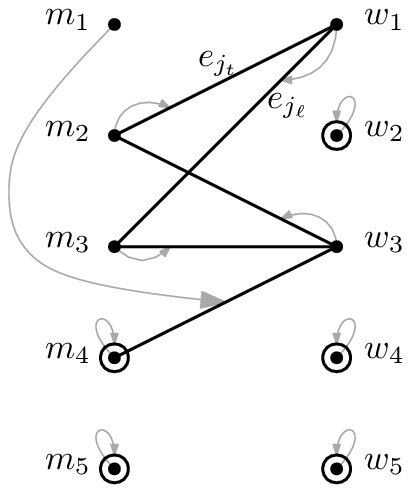}\hspace{1.5cm}
    \includegraphics{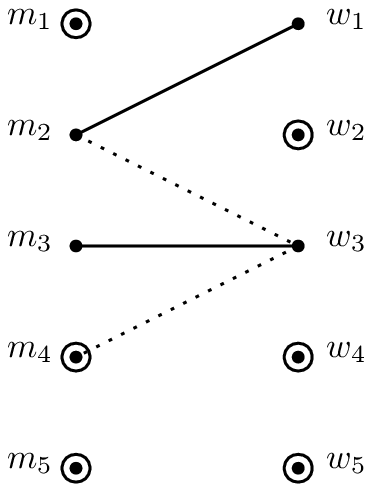}
    \caption{An illustration of some of the concepts introduced for the marriage case. We let $V=\{m_1,\dots,m_5;w_1,\dots,w_5\}$, and the pivoting rule when an even cycle occurs. On the left: The graph $G_B$, with the convention that solid edges are associated to variables from $B$ with $x$-value $1$ and dotted edges to variables from $B$ with $x$-value $0$. One can see that $G_B$ is a forest with single loops. Edge $e_{j_t}=(m_2,w_1)$ will enter the basis creating an $x$-alternating cycle $Q=(m_2,w_1),(w_1,m_3),(m_3,w_3),(w_3,m_2)$. In the center: The graph $G_D$,. All edges are full since there is no value associated to a ordinal basis. Gray arrows denote which edge is disliked by each node. It can be observed that $m_4$ is the separator. Edges $e_{j_t}$ (entering $B$) and $e_{j_{\ell}}$ (leaving $B$) are highlighted. In particular, the entering edge $e_{j_t}$ is $m_2$-disliked, and the addition of $e_{j_t}$ to $E_B$ creates a connected component with an even cycle.  On the right: The cardinal pivoting removes $e_{j_\ell}=(w_1,m_3)$, and leads to the new feasible basis $B'=B \setminus \{e_{j_\ell}\} \cup \{e_{j_t}\}$. Our pivoting rule indicates that, in this iteration, we can always select a woman-disliked edge inside $Q$ to leave. Notice that we may also select the leaving edge to be $(w_3,m_2)$. We arbitrarily select one when multiple choices exist.}
    \label{fig:bipartite-evencycle}
\end{figure}

        \item\label{it:case-2-hl-bip} In $G_{B \cup \{e_{j_t}\}}$, ${\Lambda}$ is joined with another connected component of $G_B$, as to create a component $\Gamma$ with two loops. Then $e_{j_\ell}$ can be chosen to be either a loop, or an edge of the path connecting the two loops. See Figure~\ref{fig:bipartite-example}.
        
        \begin{figure}[h!]
    \centering
    \includegraphics{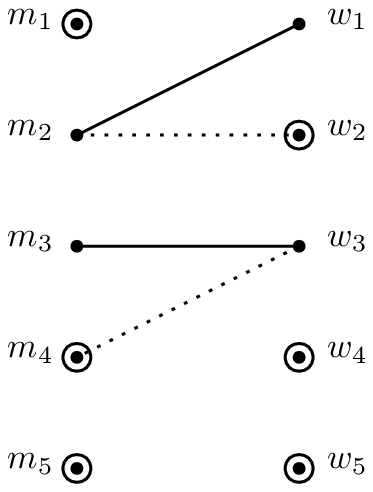}\hspace{1.5cm}
    \includegraphics{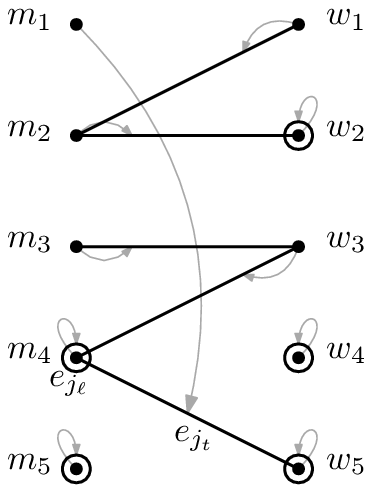}\hspace{1.5cm}
    \includegraphics{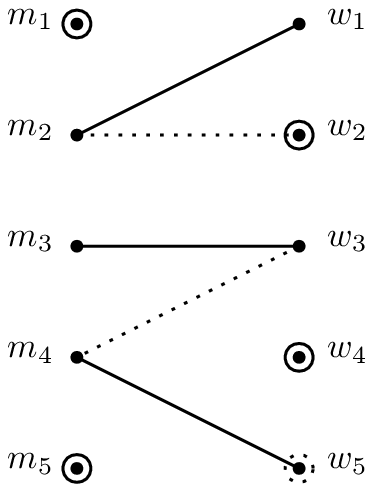}
    \caption{An illustration of pivoting rule when a path connecting two loops occurs. On the left: The graph $G_B$. In the center: The graph $G_D$. It can be observed that $m_4$ is the separator. The addition of $e_{j_t}$ to $B$ creates a connected component with two loops.     On the right: The cardinal pivoting removes $e_{j_\ell}$, which is the loop $(m_4,m_4)$, and leads to the new feasible basis $B'=B \setminus \{j_\ell\} \cup \{j_t\}$.}
    \label{fig:bipartite-example}

\end{figure}
\end{enumerate} 

We show in Lemma~\ref{lem:cardpivot} that if $e_{j_t}$ is man-disliked, we can always pick $e_{j_\ell}$ to be either (i) a woman-disliked edge in $D$ {(Figure~\ref{fig:bipartite-evencycle})}, or (ii) the loop corresponding to the separator $m_i$ {(Figure~\ref{fig:bipartite-example})}.

   \item {\bf Ordinal Pivoting:} 
    The new ordinal basis will be of the form $D'= D \setminus \{j_\ell\} \cup \{j^{*}\}$. Recall that $j^*$ is uniquely determined by $D, j_\ell$. In Section~\ref{sec:ordinal-pivot}, we show that if case (i) in the Cardinal Pivoting analysis holds, then $e_{j^*}$ is a $m_j$-disliked valid edge in $D'$ for some $j\in\{2,3,\dots,i-1\}$, see Figure~\ref{fig:bipartite-ordin-womandislikedenter}, while if (ii)  holds, then $e_{j^*}$ is an $m_{1}$-disliked valid edge in $D'$ and $m_{i+1}$ is the separator in $D'$ (let $m_{k+1}=m_1$), see Figure~\ref{fig:bipartite-ordin-separatorchange}. In both cases, $e_{j^*}$ is man-disliked, which provides the exact conditions for the cardinal pivot rule discussed above to apply.

\begin{figure}[h!]
    \centering
    \includegraphics{fig-bipartite/fig-bipartite-dislikedevencycle.eps}\hspace{1.5cm}
    \includegraphics{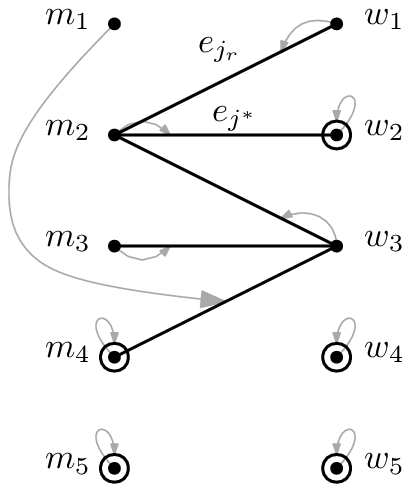}
    \caption{The ordinal pivot as a continuation of Figure~\ref{fig:bipartite-evencycle}. On the left: The graph $G_D$, where we want the $w_1$-disliked edge $(m_3,w_1)$ to leave. We then find the second worst choice for $w_1$, which is the reference edge $(m_2,w_1)$ and $m_2$-disliked in $D$. On the right: The graph $G_{D'}$, where the next entering variable $e_{j^*}$ is a $m_2$-disliked valid edge. During this ordinal pivot, the utility of $w_1=i_\ell$ increases, and the utility of $m_2=i_r$ decreases, while others do not change their utilities and their disliked edges. The separator stays at $m_4$.}
    \label{fig:bipartite-ordin-womandislikedenter}
    
\end{figure}
\medskip  
  
\begin{figure}[h!]
    \centering
    \includegraphics{fig-bipartite/fig-bipartite-disliked.eps}\hspace{1.5cm}
    \includegraphics{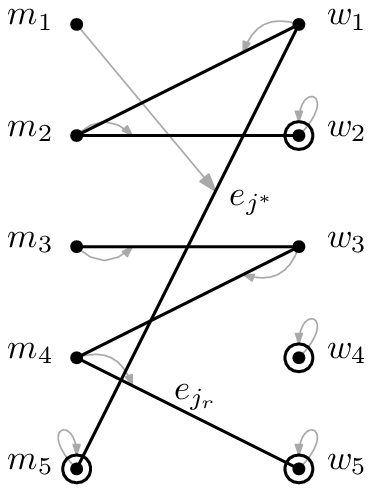}
    \caption{The ordinal pivot as a continuation of Figure~\ref{fig:bipartite-example}. On the left: The graph $G_D$, where we want the loop $(m_4,m_4)$ to leave. We then find the $m_1$-disliked edge $(m_4,w_5)$ as the reference edge $e_{j_r}$. On the right: The graph $G_{D'}$, where the next entering variable $e_{j^*}$ is a $m_1$-disliked valid edge. During this ordinal pivot, the utility of $m_4=i_\ell$ increases, and the utility of $m_1=i_r$ decreases, while others do not change their utilities and their disliked edges. The separator changes from $m_4$ to $m_5$.}
    \label{fig:bipartite-ordin-separatorchange}
    
\end{figure}
\medskip

\end{enumerate}
We then continue the next iteration with $(B',D')$. 

\smallskip

Our convergence analysis follows by the \emph{componentwise monotone evolution} of a \emph{potential vector} in $\Z^2$, see Section~\ref{sec:convergence}: \begin{equation}\label{eq:app-convergence-bi}
    \left(i,\sum_{w\in W}u_w \right),
\end{equation}
where $i$ is the index of the current separator. That is, at each iteration one component of the potential vector strictly increases while the other does not decrease. In particular, either the new separator becomes $m_{i+1}$ and and the total utility of women (i.e.,  the $\ell_1$ norm of the subvector of $u$ restricted to women) does not decrease, or the total utility of women strictly increases, while the separator is still $m_i$. 
Since both the number of separators (men) and the total utility of women are bounded by $O(n^2)$, the number of iterations is $O(n^2)$, and each iteration can clearly be performed in polynomial time. 

\smallskip

For a \emph{combinatorial interpretation} of the pivoting rule, one can think of the algorithm as adding men one by one from the queue $m_2,m_3,\dots,m_k,m_1$. At each iteration, the separator denotes the last man introduced. One can show that we change the separator from $m_i$ to $m_{i+1}$ as soon as we obtain a ``local'' stable matching, i.e., a stable matching restricted to men $m_2,m_3,\dots,m_i$ and all women. In iterations when the separator does not change, the algorithm adjusts the current matching by ``improving'' the matching for women (thus the increase in $\sum_{w \in W} u_w)$, until a ``local'' stable matching is obtained.

\subsection{Polynomial-time Convergence Through a Perturbation of the Polytope}\label{sec:overview:perturbation}

In the previous section, we showed that a stable matching can be obtained in polynomial time by running Scarf's algorithm on the bipartite matching polytope $\mathcal{P}$ with a suitable pivoting rule. The implementation of this approach would however be non-trivial, since it needs in particular  to deal with degenerate pivots. For practical purposes, it would be desirable to run Scarf's algorithm on a perturbation of ${\cal P}$, since every pivot of Scarf's algorithm on non-degenerate polytopes is uniquely determined~\cite{scarf1967core}, hence no tailored pivoting rule needs to be implemented.

In Section~\ref{sec:perturbation}, we show how a ``classical'' perturbation allows us to find a stable matching using Scarf's algorithm in polynomial time. Our approach is as follows. We perturb $\mathcal P$ as to construct a non-degenerate polytope $\mathcal{P}^\epsilon$ (see~\eqref{eq:nondegeneratepoly}). Thus, an execution of Scarf's algorithm on $\mathcal{P}^\epsilon$ is uniquely defined by a sequence 
of pairs $(x^\epsilon_0,D_0)\to(x^\epsilon_1,D_1)\to\dots\to (x^\epsilon_N,D_N)$, where $x_I^\epsilon$ is a vertex of $\mathcal{P}^\epsilon$ and $D_I$ is an ordinal basis of $C$, for $0\le I\le N$. Our main technical ingredient here is to show that there is a sequence of Scarf pairs $(B_0,D_0)\to(B_1,D_1)\to\dots\to (B_N,D_N)$ satisfying our pivoting rule in the non-perturbed case (i.e., Algorithm~\ref{alg:pivoting}) and such that, for each $I$, $B_I$ corresponds in ${\cal P}^\epsilon$ to $x_I^\epsilon$. 
Therefore, based on Theorem~\ref{main:bipartite-poly}, we have $N=poly(n)$, achieving the claimed polynomial-time convergence. The details are given in Theorem~\ref{thm:perturbation}.

\subsection{Limits of Scarf's Algorithm: Expressing Stable Matchings}\label{sec:overview-expressing-sm}

As we argue next, under the (quite natural) consistency assumption on $C$, the set of stable matchings that Scarf's algorithm can output may be exponentially smaller than the set of all stable matchings of an instance. 
\begin{definition}\label{def:C-consistency}
An ordinal matrix $C$ is \emph{consistent} with a marriage instance ${\cal I}=(G(V,E),\succ)$ with $V=\{v_1,\dots,v_n\}$ if

\begin{enumerate}
    \item[(i)] For any $i, \bar \imath \in [n]$ and $e_j\in E$ such that $v_i\in e_j$ and $v_{\bar \imath}\notin e_j$, we have $c_{i,j}<c_{\bar \imath,j}$.
    \item[(ii)] For any $i\in [n]$ and $e_j,e_\ell\in E$ such that $v_i\in e_j,e_\ell$, then $c_{ij}>c_{i\ell}$ if and only if $e_j\succ_{v_i}e_\ell$.
\end{enumerate}
\end{definition}
In Definition~\ref{def:C-consistency}, (i) is a regularity condition, and (ii) means that $C$ characterizes the order of $\succ$ correctly. Under the assumption that $C$ is consistent, one can deduce that every dominating vertex $x$ of $(A,b,C)$ is the characteristic vector of  a stable matching of ${\cal I}$, where $A,b$ is induced by ${\cal I}$, see~\cite{aharoni2003lemma}. We emphasize that all the ordinal matrices $C$ discussed in 
Theorems~\ref{thm:existence-sm},\ref{main:bipartite-poly}, and extensions to hypergraphic matching~\cite{aharoni2003lemma} and matching with couples~\cite{biro2016matching} yield consistent $C$. 

We show that, when $C$ is consistent, $dom(A,b,C)$ always yields a $v$-optimal stable matching, i.e., there exists an agent $v$ who is matched to her best partner among all stable matchings. We remark, in passing, that, since our implementations of Scarf's algorithm discussed in Section~\ref{sec:hl-poly} and Section~\ref{sec:overview:perturbation} are obtained with a consistent $C$, the theorem applies for those algorithms as well.
\begin{theorem}\label{main:lastoptimal}
Suppose $C$ is a consistent ordinal matrix and $A,b$ are as in Theorem~\ref{thm:existence-sm}. Then any dominating vertex of $(A,b,C)$ is a characteristic vector of a $v$-optimal stable matching for some $v\in V$.\end{theorem}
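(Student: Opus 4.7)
The plan is to locate, within the basis $D$ of a dominating vertex, an agent whose disliked edge under the bijection between $V$ and $D$ coincides with its matching partner, and to prove $v$-optimality for that agent via an alternating-cycle argument using the ordinal basis condition. Let $x$ be a dominating vertex, $B=D$ its basis, and $\mu$ the corresponding stable matching. Since $x$ takes value $1$ on every edge of $\mu$, we have $\mu\subseteq D$; combined with the bijection $d:V\to D$ defined by $d(v)=\arg\min_{k\in D}c_{v,k}$, each matching edge $(m,w)\in\mu$ is ``claimed'' by exactly one of its endpoints (its \emph{matching-claimer}). Under complete preferences every stable matching is perfect, so matching-claimers always exist.

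Fix any matching-claimer $v$ and suppose for contradiction that some stable $\mu'$ satisfies $\mu'(v)\succ_v\mu(v)$. I would consider the alternating cycle $C\subseteq\mu\triangle\mu'$ containing $v$; by the classical property of symmetric differences of stable matchings, every agent of $C$ on $v$'s side of the bipartition strictly prefers $\mu'$ while every agent on the opposite side strictly prefers $\mu$. For each edge $e=(m,w)\in C\cap(\mu'\setminus\mu)$, consistency~(i) restricts any agent satisfying $u_{v'}\geq c_{v',e}$ or $d(v')=e$ to the endpoints of $e$. The endpoint on the $\mu'$-preferring side (say $m$) fails this: $d(m)$ is $m$'s worst $D$-edge and $\mu(m)\in D$ with $c_{m,\mu(m)}<c_{m,e}$ by consistency~(ii), since the cycle property gives $e=(m,\mu'(m))\succ_m\mu(m)$. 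Hence the opposite-side endpoint $w$ must dominate or claim $e$, and since loops rank strictly worst under consistency, $w$ cannot be a loop-claimer, so $w$ is either a matching-claimer or an \emph{extra-claimer} (its disliked edge is a non-matching valid edge of $D$).

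In the simple case where $D$ contains no extra non-matching valid edges---so each component of $G_D$ (Lemma~\ref{lem:forest}) consists of one matched pair plus one loop---$w$ must be a matching-claimer, forcing its partner $\mu(w)$ to be a loop-claimer. Iterating around $C$ makes every opposite-side agent a matching-claimer and every $v$-side agent a loop-claimer, so closing the cycle would force $v$ itself to be a loop-claimer, contradicting the hypothesis.

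The hard part will be the general basis structure, where $D$ may contain extra non-matching edges, allowing $w$ to be an extra-claimer. The plan to handle this is to leverage the forest-with-single-loops structure of $G_D$: if every opposite-side agent in $C$ claimed its adjacent $\mu'$-edge as an extra, then all those $\mu'$-edges would lie in $D$, and $G_D$ would contain the graph cycle $C$, violating the forest property. A case analysis tracing the remaining extras (those not equal to a $\mu'$-edge of $C$)---using that each such extra still imposes, via consistency~(ii), a matching-claimer assignment on a different pair adjacent to $w$---extends the parity argument around $C$ and completes the contradiction in the general case.
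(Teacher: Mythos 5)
Your argument is anchored on the claim that \emph{every} matching-claimer $v$ (an agent whose disliked edge in $D$ is $(v,\mu(v))$) receives its optimal stable partner in $\mu$, and this claim is false; the ``hard part'' you defer is not merely technical, it is where counterexamples live. Take the instance of Example~\ref{ex:scarfnegative} ($m_i: w_i\succ w_{i+1}\succ w_{i+2}$, $w_i: m_{i+1}\succ m_{i+2}\succ m_i$, indices mod $3$), the man-optimal stable matching $\mu_1=\{(m_1,w_1),(m_2,w_2),(m_3,w_3)\}$, the consistent matrix $C^*$ of Section~\ref{sec:matrixdesign}, and the basis
$D=\{(m_1,w_1),(m_2,w_2),(m_3,w_3),(m_1,w_2),(w_2,w_2),(w_3,w_3)\}$.
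This is a feasible basis (a forest with single loops whose basic solution is $\chi^{\mu_1}$), and one checks directly that every column of $C^*$ is dominated by its utility vector $u=(2,3,3,1,0,0)$, so $D$ is a dominating basis for $\mu_1$. Here $d(w_1)=(m_1,w_1)$, so $w_1$ is a matching-claimer in your sense, yet $m_1$ is $w_1$'s \emph{worst} stable partner. Tracing your cycle argument on $\mu_1\triangle\mu_3$ shows exactly where the propagation dies: $m_1$ dominates the $\mu_3$-edge $(m_1,w_3)$ through the \emph{extra} edge $(m_1,w_2)\in D$ rather than through $(m_1,w_1)$, so no constraint ever returns to $w_1$ and no contradiction exists. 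Your ``simple case'' ($D=\mu\cup\text{loops}$) is correct, but the general case cannot be completed for an arbitrary matching-claimer because the statement you are trying to contradict actually occurs.

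The paper's proof avoids this by choosing the witness differently: it does not start from a matching-claimer at all. It picks a loop $(v_\ell,v_\ell)\in D$ (guaranteed by Lemma~\ref{lem:forest}), notes that consistency forces $u_{v_\ell}$ to be the absolute minimum of row $v_\ell$, and then considers the single column $e=(m^*,v_\ell)$ where $m^*$ is the agent on the other side for whom $v_\ell$ is the optimal stable partner (Lemma~\ref{lem:one-side-optimal}). Row $v_\ell$ cannot dominate $e$, non-incident rows cannot by consistency, so row $m^*$ must, which forces $\mu(m^*)\succeq_{m^*}v_\ell$ and hence $\mu(m^*)=v_\ell$, i.e.\ $\mu$ is $m^*$-optimal --- a one-column argument with no alternating cycles. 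In the counterexample above this correctly identifies $m_2$ (the partner of the loop-agent $w_2$ in the man-optimal matching) rather than $w_1$. If you want to salvage your approach, you must restrict to matching-claimers of this special type (those whose partner is a loop-claimer), at which point you essentially recover the paper's proof.
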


Hence, any ``intermediate'' stable matching (i.e.,  that does not assign any agent to their favorite stable partner) cannot be output by Scarf's algorithm because it cannot even be represented by a dominating vertex. We show in Example~\ref{ex:Irving-leather} an infinite family of instances $\mathcal{I}_n$ with $2$ stable matchings that are $v$-optimal for some $v$, but exponentially many stable matchings, and Theorem~\ref{thm:Scarf-is-weak} follows.


\section{Review of Scarf's Algorithm}\label{sec:ReviewScarf}

We discuss here some classical properties of Scarf's algorithm and introduce related definitions and notation. This section can be used by the reader as an introduction / reminder of the algorithm, but it also presents building blocks that will be used in our arguments in future sections. Missing proofs can be found in~\cite{scarf1967core}. 

Recall that the input to Scarf's algorithm is given by $n\times (n+m)$ nonnegative matrices $A$ and $C$ with special properties. We call $C$ an \emph{ordinal} matrix (see Section~\ref{sec:scarf-lemma}). 

\subsection{Cardinal and Ordinal Pivots} To make the argument clear, we first add the standard nondegeneracy assumption that all of the variables associated with the $n$ columns of a feasible basis $B$ for the equations $Ax=b$ are strictly positive (Scarf's algorithm was originally stated in this setting only~\cite{scarf1967core}). Recall that at each step of Scarf's algorithm, we are given matrices $B$ and $D$, where $B$ is a $(A,b)$ basis and $D$ is an ordinal basis for $C$. The properties of $D$ imply the following.

\begin{proposition}\label{prop:app1.1}
For any column $c$ of an ordinal basis $D$, there is exactly one row minimizer to be used in forming the utility vector. More formally, there is a unique row $i \in [n]$, such that $u_{i}=c_{i}$. Hence, for the other rows $\bar \imath\neq i$, we have $u_{\bar \imath}<c_{\bar \imath}$.\end{proposition}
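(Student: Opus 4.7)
The plan is to introduce the function $\sigma:[n]\to D$ sending each row $i$ to the column of $D$ at which the row minimum $u_i$ is attained, and to show that $\sigma$ is a bijection. The proposition then falls out immediately, since for each column $j\in D$ the preimage $\sigma^{-1}(j)$ is the set of rows $i$ with $u_i=c_{i,j}$, which a bijection forces to be a singleton.

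First I would check that $\sigma$ is well-defined. Fix $i\in[n]$; among the $n$ entries $\{c_{i,j} : j\in D\}$, all are pairwise distinct because $C$ has pairwise distinct entries by the definition of an ordinal matrix. Hence exactly one of them equals $u_i=\min_{j\in D}c_{i,j}$, and this entry determines $\sigma(i)$ unambiguously.

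Next I would establish surjectivity by applying the defining property of the ordinal basis to a column $h\in D$ itself. On one hand, by the definition of $u_i$ as a minimum taken over $D$, we have $u_i\le c_{i,h}$ for every $i\in[n]$. On the other hand, since $D$ is an ordinal basis, there exists at least one $i\in[n]$ such that $u_i\ge c_{i,h}$. Combining the two inequalities gives $u_i=c_{i,h}$, i.e.\ $\sigma(i)=h$. Thus every column of $D$ lies in the image of $\sigma$, and since $[n]$ and $D$ both have cardinality $n$, $\sigma$ is a bijection.

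The second (``hence'') part of the statement then follows directly. Given a column $j\in D$, let $i=\sigma^{-1}(j)$. For any other row $\bar\imath\neq i$ we have $\sigma(\bar\imath)\neq j$, so $c_{\bar\imath,j}$ is not the minimum of row $\bar\imath$ restricted to columns of $D$; together with $u_{\bar\imath}\le c_{\bar\imath,j}$ and the distinctness of entries, this yields the strict inequality $u_{\bar\imath}<c_{\bar\imath,j}$. There is no real obstacle to overcome in this proof: the entire content is the observation that the ordinal-basis condition, when specialized to $h\in D$, promotes the automatic inequality $u_i\le c_{i,h}$ into an equality for some $i$, which is exactly surjectivity of $\sigma$.
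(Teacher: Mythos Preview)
Your argument is correct and is essentially the standard proof; the paper itself does not supply a proof of this proposition but defers to Scarf~\cite{scarf1967core}, and what you wrote matches that classical argument. The only point worth flagging is that the paper's definition of ordinal basis contains an apparent typo (``for every column $h\in[m]$'' should read $h\in[n+m]$), and your surjectivity step correctly reads the condition as ranging over all columns, in particular those in $D$.
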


Proposition~\ref{prop:app1.1} gives a bijection from the $n$ columns of an ordinal basis to the $n$ rows.

Recall that an iteration of Scarf's algorithm consists of two main steps: cardinal pivot and ordinal pivot. 
Cardinal pivot is similar to the pivot performed by the simplex algorithm, where we have:
\begin{lemma}\label{lem:appcplemma}
Let $B=\{j_1,\cdots,j_n\}$ be an $(A,b)$ basis, and let $j'$ be an arbitrary column not in $B$. Then, if~\eqref{eq:Ax-leq-b} is nondegenerate and the feasible set $\{x|x\ge 0 \textrm{ and } Ax=b\}$ is bounded, there is a unique $j_t \in B$ such that $B\setminus \{j_t\} \cup \{j'\}$ is an $(A,b)$ basis.
\end{lemma}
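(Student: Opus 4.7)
The plan is to invoke the standard basis-exchange argument that underlies the simplex method. Since $B=\{j_1,\dots,j_n\}$ is an $(A,b)$ basis, the columns $A_{j_1},\dots,A_{j_n}$ form a basis of $\R^n$, so there exist unique scalars $\lambda_1,\dots,\lambda_n$ with $A_{j'}=\sum_{s=1}^{n}\lambda_s A_{j_s}$. Let $x^\star$ be the feasible point associated with $B$; by nondegeneracy $x^\star_{j_s}>0$ for every $s\in[n]$. For $\theta\geq 0$, define $x(\theta)\in\R^{n+m}$ by $x(\theta)_{j'}=\theta$, $x(\theta)_{j_s}=x^\star_{j_s}-\theta\lambda_s$ for $s\in[n]$, and zero elsewhere. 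By construction $Ax(\theta)=b$ for every $\theta$, and the only way $x(\theta)$ can fail to be feasible is for some coordinate $x^\star_{j_s}-\theta\lambda_s$ to become negative.

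For existence, I would use boundedness to guarantee that the set $S:=\{s\in[n]:\lambda_s>0\}$ is nonempty. Indeed, if $\lambda_s\leq 0$ for all $s$, then $x(\theta)\geq 0$ for every $\theta\geq 0$, producing an unbounded ray of feasible solutions and contradicting boundedness of $\{x\geq 0:Ax=b\}$. Given $S\neq\emptyset$, set $\theta^\star:=\min_{s\in S} x^\star_{j_s}/\lambda_s$, pick any $t\in S$ achieving the minimum, and define $B':=B\setminus\{j_t\}\cup\{j'\}$. Because $\lambda_t\neq 0$, the column $A_{j'}$ is not in the linear span of $\{A_{j_s}:s\neq t\}$, so the exchange preserves linear independence and $B'$ is a linear-algebraic basis. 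Moreover $x(\theta^\star)\geq 0$ and its support is contained in $B'$, so $B'$ is an $(A,b)$ basis.

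For uniqueness, I would appeal to nondegeneracy. Suppose the minimum defining $\theta^\star$ were attained at two distinct indices $s_1\neq s_2$ in $S$. Then the point $x(\theta^\star)$ would satisfy $x(\theta^\star)_{j_{s_1}}=x(\theta^\star)_{j_{s_2}}=0$, so in \emph{any} candidate basis $B\setminus\{j_{s_i}\}\cup\{j'\}$ obtained by letting $j_{s_i}$ leave, the basic variable corresponding to the other index would vanish. This contradicts the assumed nondegeneracy of~\eqref{eq:Ax-leq-b}, which requires every basic variable to be strictly positive. Hence the minimum ratio is achieved at a unique $t$, and the leaving column $j_t$ is uniquely determined.

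The argument is entirely routine — it is the classical minimum-ratio test — so the only ``obstacle'' is bookkeeping: one needs to pair boundedness (to get existence of some $\lambda_s>0$, hence a finite $\theta^\star$) with nondegeneracy (to force the minimum ratio to be attained at a single index) in order to obtain both existence and uniqueness of $j_t$ simultaneously. No new ideas beyond standard LP are required.
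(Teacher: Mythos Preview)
Your argument is correct and is precisely the standard minimum-ratio-test justification. The paper does not give its own proof of this lemma: it simply states it as a classical LP fact and refers to~\cite{scarf1967core} (and elsewhere to~\cite{bertsimas1997introduction}) for such background. One small point you could make explicit in your uniqueness paragraph: you only rule out ties among minimum-ratio indices, but the statement asks that \emph{no} $j_t\in B$ other than the minimum-ratio one yields an $(A,b)$ basis; this follows immediately since for $\lambda_s\le 0$ the exchange is either linearly dependent or has $x(\theta)_{j'}<0$, and for $\lambda_s>0$ not attaining the minimum the resulting basic solution has a strictly negative component at the true minimizer.
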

The previous is a standard result in linear programming, which says we can arbitrarily choose an outside column to enter the basis while a unique column leaves. A symmetric property holds for the ordinal pivot. An arbitrary column in an ordinal basis visited by the algorithm may be removed and a unique column introduced from outside so that the new set of columns is also an ordinal basis.
\begin{lemma}\label{lem:appoplemma}
Let $D=\{j_1,\cdots,j_n \}$ be an ordinal basis of $C$ and $\ell \in [n]$. Then there exists a unique column $j^*\notin D$ such that $D\cup \{j^*\}\setminus \{j_\ell\}$ is an ordinal basis of $C$. 
\end{lemma}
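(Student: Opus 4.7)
The plan is to use the bijection of Proposition~\ref{prop:app1.1} between columns of an ordinal basis and rows of $[n]$, and to trace how this bijection is perturbed when $j_\ell$ leaves $D$.

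\textbf{Setup.} Let $i_\ell\in[n]$ be the unique row paired with $j_\ell$ in $D$, so that $u_{i_\ell}=c_{i_\ell,j_\ell}$. Let $j_r$ be the unique column of $D\setminus\{j_\ell\}$ attaining the minimum of $c_{i_\ell,\cdot}$ over $D\setminus\{j_\ell\}$ (the second-smallest value of row $i_\ell$ within $D$), and let $i_r$ be the row paired with $j_r$ in $D$. After $j_\ell$ is removed, $j_r$ becomes the row-minimizer on two rows---$i_\ell$ (newly) and $i_r$ (originally)---breaking the bijection. Any column $j^{*}\notin D$ for which $D'=D\setminus\{j_\ell\}\cup\{j^{*}\}$ is an ordinal basis must, by Proposition~\ref{prop:app1.1} applied to $D'$, be the unique row-minimizer of some row $i^{*}\in[n]$ in $D'$, and this choice of $i^{*}$ is what resolves the overload at $j_r$.

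\textbf{Restricting $i^{*}$.} I would first argue $i^{*}\in\{i_\ell,i_r\}$: if $j^{*}$ won a row $i^{*}\notin\{i_\ell,i_r\}$, the old winner $j_{i^{*}}$ would be displaced and would need to win some other row in $D'$, but the inequalities $c_{i,j_{i^{*}}}>u_i$ for $i\ne i^{*}$ (Proposition~\ref{prop:app1.1} in $D$) together with $c_{i_\ell,j_{i^{*}}}>c_{i_\ell,j_r}$ (by the choice of $j_r$) leave $j_{i^{*}}$ without any available row to win. I would then rule out $i^{*}=i_\ell$: the required condition $c_{i,j^{*}}>u_i$ for $i\ne i_\ell$, combined with the fact that $j^{*}\notin D$ must be dominated somewhere in $D$, forces $c_{i_\ell,j^{*}}\le u_{i_\ell}$; but dominance of the outgoing column $j_\ell$ in $D'$ via row $i_\ell$ forces $c_{i_\ell,j^{*}}\ge u_{i_\ell}$, contradicting the distinct-entry property of $C$.

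\textbf{Existence and uniqueness when $i^{*}=i_r$.} The conditions on $j^{*}$ reduce to $c_{i_r,j^{*}}<u_{i_r}$ together with $c_{i,j^{*}}>u_i$ for every $i\ne i_r$. I plan to take $j^{*}$ to be the column in $[n+m]\setminus D$ maximising $c_{i_r,\cdot}$ subject to $c_{i_r,\cdot}<u_{i_r}$, and to verify, using the ordinal-basis property of $D$, that it also satisfies $c_{i,j^{*}}>u_i$ for $i\ne i_r$ and that the resulting $D'$ is an ordinal basis (the only delicate step is confirming that outside columns originally dominated via row $i_r$ remain dominated under the shifted utility $u^{D'}$). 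Uniqueness then follows by supposing two valid candidates $j^{*},j^{**}$ and noting that each must be dominated in the ordinal basis generated by the other: the case-analysis inequalities rule out every row except $i_r$, yielding both $c_{i_r,j^{*}}>c_{i_r,j^{**}}$ and $c_{i_r,j^{**}}>c_{i_r,j^{*}}$ by distinct entries, a contradiction. The main obstacle is this existence step, in particular verifying dominance of every outside column once the utility vector shifts from $u$ to $u^{D'}$; this relies on the maximality of $c_{i_r,j^{*}}$ together with the compensating increase of $u^{D'}$ on row $i_\ell$.
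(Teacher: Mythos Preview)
Your setup and the two reduction steps (restricting $i^{*}$ to $\{i_\ell,i_r\}$ and ruling out $i^{*}=i_\ell$) are correct and match the classical argument the paper refers to. The uniqueness argument is also fine. The genuine gap is in the existence step.

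You propose to take $j^{*}$ as the column in $[n+m]\setminus D$ maximising $c_{i_r,\cdot}$ subject only to $c_{i_r,\cdot}<u_{i_r}$, and then to \emph{verify} that $c_{i,j^{*}}>u_i$ for all $i\ne i_r$. This verification fails in general. A column can have $c_{i_r,\cdot}$ just below $u_{i_r}$ while also satisfying $c_{i,\cdot}<u_i$ for some third row $i\ne i_r,i_\ell$; nothing in the ordinal-basis property of $D$ prevents this, since such a column is already dominated in $D$ via that third row. Concretely, with $n=3$, take $D=\{4,5,6\}$ with row-winners $4\leftrightarrow 1$, $5\leftrightarrow 2$, $6\leftrightarrow 3$ and $u=(10,11,12)$; remove $j_\ell=4$ so $i_\ell=1$, $j_r=5$, $i_r=2$. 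A column $7$ with $(c_{1,7},c_{2,7},c_{3,7})=(56,9,8)$ is your candidate (it maximises $c_{2,\cdot}$ below $11$ among $\{1,2,3,7\}$), but $c_{3,7}=8<12=u_3$, so $\{5,6,7\}$ is not an ordinal basis: column $6$ wins no row and is therefore undominated.

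The construction the paper records in Definition~\ref{def:appopdef} (following Scarf) reverses the order of the two constraints: it first restricts to the set $K=\{k:\ c_{i,k}>\bar u_i\text{ for all }i\ne i_r\}$ and \emph{then} maximises $c_{i_r,\cdot}$ over $K$. Nonemptiness of $K$ is witnessed by the slack column $i_r$, and the maximiser over $K$ is forced to satisfy $c_{i_r,j^{*}}<u_{i_r}$ (else $c_{j^{*}}>u$, contradicting that $D$ is ordinal). With this order, both the bijection in $D'$ and domination of every outside column follow directly; your ``maximality plus compensating increase on row $i_\ell$'' intuition only works once you are already inside $K$. A minor additional slip: on row $i_\ell$ the correct threshold is $\bar u_{i_\ell}=c_{i_\ell,j_r}$, not $u_{i_\ell}$.
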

Recall that the procedure to find such a new column $j^*$ is called ordinal pivot. We give the formal definition of ordinal pivot as follows:
\begin{definition}[Ordinal pivot]\label{def:appopdef}
Consider an ordinal basis $D$ and a specific column $j_\ell$ to be removed from it. In the $n\times (n-1)$ matrix of remaining columns, define $$\bar{u}_i=\min_{j\in D\setminus \{j_\ell\}} c_{i,j}.$$
There exists exactly one column $j_r$ that contains two row minimizers in forming $\bar{u}$ (according to Proposition \ref{prop:app1.1}). We call $j_r$ the \emph{reference column}. Between the two minimizers, one of them is new and the other is a row minimizer of the original ordinal basis. Let the row associated with the former have an index $i_\ell$ (i.e., $c_{i_\ell,j_\ell}=u_{i_\ell}<\bar{u}_{i_\ell}=c_{i_\ell,j_r}$) and the latter have an index $i_r$ (i.e., $c_{i_r,j_r}=u_{i_r}=\bar{u}_{i_r}$). Let $K$ denote the columns in $C$ such that $k\in K$ if 
\begin{equation}\label{eq:appchoiceoford}
    c_{i,k}>\bar{u}_i, \textrm{ for all } i\neq i_r.
\end{equation}
Of the columns in $K$, select the one which maximizes $c_{i_r,k}$, i.e.
$$j^*=\mathop{\arg\max}_{k\in K} c_{i_r, k}.$$
An ordinal pivot step introduces this column $j^*$ into the ordinal basis, as to form the new ordinal basis $D'=D\setminus \{j_\ell\} \cup \{j^*\}$. 
\end{definition}

It can be shown that $j=j^*,j_\ell$ are the only two columns that make $D\setminus\{j_\ell\}\cup\{j\}$ an ordinal basis. This fact tells us that the number of ordinal bases which contains any given $n-1$ columns can only be $0$ or $2$. It also suggests that the ordinal pivots are ``reversible'': If $j_\ell$ is eliminated from a basis and $j^*$ brought in, then $j^*$ may be eliminated from the new basis and the original basis will be obtained.\par
It is useful to analyze the change of utility vector in an ordinal pivot. We follow the notation from Definition \ref{def:appopdef}.
\begin{lemma}\label{lem:app:changeofu}
Consider the two utility vectors $u$ and $u'$ associated to $D$, $D'$, respectively. Then, when going from $D$ to $D'$, the utility of $i_\ell$ increases while the utility of $i_r$ decreases, and others are indifferent. Formally, 
$$u'_{i_\ell}=c_{i_\ell,j_r}>c_{i_\ell,j_\ell}=u_{i_\ell},$$
$$u'_{i_r}=c_{i_r,j^*}<c_{i_r,j_r}=u_{i_r},$$
$$u'_{i}=u_{i},\textrm{ for $i \in [n],\  i \neq i_\ell,i_r$}.$$
\end{lemma}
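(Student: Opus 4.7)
The plan is to compute the new utility $u'_i = \min_{j \in D'} c_{i,j}$ row by row, comparing it first with the intermediate quantity $\bar u_i = \min_{j \in D \setminus \{j_\ell\}} c_{i,j}$ from Definition~\ref{def:appopdef}, and then with the old utility $u_i$. The key observation that I would use repeatedly is that $D' = (D \setminus \{j_\ell\}) \cup \{j^*\}$, so $u'_i = \min(\bar u_i,\, c_{i,j^*})$. By the defining property of $K$, we have $c_{i,j^*} > \bar u_i$ for every $i \neq i_r$, so for such rows $u'_i = \bar u_i$, and only the row $i_r$ can possibly see the new column affect the minimum.

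Next I would handle the rows $i \notin \{i_\ell, i_r\}$. By Proposition~\ref{prop:app1.1}, each column of the ordinal basis $D$ contains exactly one row minimizer for $u$, and by the choice of $i_\ell$, the row minimizer contained in column $j_\ell$ is row $i_\ell$. Hence for every $i \neq i_\ell$ the minimum defining $u_i$ is attained at some column of $D \setminus \{j_\ell\}$, giving $\bar u_i = u_i$, and combined with $u'_i = \bar u_i$ for $i \neq i_r$ this yields $u'_i = u_i$ for $i \neq i_\ell, i_r$. For $i = i_\ell$, by the definition of $j_r$ as the reference column, the new row-minimizer of row $i_\ell$ in $D \setminus \{j_\ell\}$ sits in column $j_r$, i.e.\ $\bar u_{i_\ell} = c_{i_\ell, j_r}$; since $j_\ell$ was the unique minimizer of row $i_\ell$ in $D$ and all entries of $C$ are distinct, the strict inequality $c_{i_\ell,j_r} > c_{i_\ell,j_\ell} = u_{i_\ell}$ follows, and since $i_\ell \neq i_r$ we get $u'_{i_\ell} = \bar u_{i_\ell} = c_{i_\ell, j_r}$, as claimed.

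The only case that requires a small argument is $i = i_r$, and this is where I expect the main (though still modest) obstacle. Here $\bar u_{i_r} = c_{i_r, j_r} = u_{i_r}$, so $u'_{i_r} = \min(u_{i_r},\, c_{i_r, j^*})$ and I must show $c_{i_r,j^*} < u_{i_r}$. I would argue by contradiction: suppose $c_{i_r,j^*} \geq c_{i_r,j_r}$; since $j^* \notin D$ while $j_r \in D$, the two entries are distinct, so $c_{i_r,j^*} > c_{i_r,j_r} = u_{i_r}$. Combined with what has already been shown for the rows $i \neq i_r$ (namely $c_{i,j^*} > \bar u_i \geq u_i$, with strict inequality for $i = i_\ell$ following from $c_{i_\ell,j^*} > \bar u_{i_\ell} = c_{i_\ell,j_r} > u_{i_\ell}$), this would give $c_{i,j^*} > u_i$ for every row $i$, i.e.\ the column $j^*$ strictly dominates the utility vector $u$, contradicting the assumption that $D$ is an ordinal basis. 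Hence $c_{i_r,j^*} < c_{i_r,j_r} = u_{i_r}$, and $u'_{i_r} = c_{i_r,j^*}$, completing the proof.
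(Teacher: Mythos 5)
Your proof is correct and follows the standard argument (the one in Scarf's original paper, to which the text defers for this lemma): compute $u'_i=\min(\bar u_i, c_{i,j^*})$ row by row, use the defining property of $K$ to get $u'_i=\bar u_i$ for $i\neq i_r$, and rule out $c_{i_r,j^*}>u_{i_r}$ because the column $c_{j^*}$ would then strictly dominate $u$, contradicting that $D$ is an ordinal basis. No gaps.
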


\subsection{Scarf Pairs, Almost-feasible Ordinal Bases, and the Termination Condition for the Algorithm}\label{sec:scarf:almost-feasible-etc}

We have already discussed in Section~\ref{sec:scarf-lemma} how the sequence of ordinal and cardinal pivoting leads, in finite time, to a dominating vertex. We next give more details on the (ordinal) bases visited and the termination condition of the algorithm.

\begin{definition}[Scarf pair]\label{def:iter-scarfpair}
Suppose Scarf's algorithm starts with initial feasible basis $B_0$ and ordinal basis $D_0$. For $i\ge 1$, if $B_{i-1}\neq D_{i-1}$, then the $i$-th \emph{iteration} consists first of a cardinal pivot $(B_{i-1},D_{i-1})\to (B_i,D_{i-1})$ and then, if $B_i\neq D_{i-1}$, of an ordinal pivot $(B_{i},D_{i-1})\to (B_i,D_i)$, where the pivots are defined in the previous section. Let $$B_0,B_1, \dots, B_I \quad \hbox{ and } \quad D_0,D_1,\dots, D_I$$ be the sequence of feasible (resp.~ordinal) bases visited by Scarf's algorithm such that $B_I=D_I$ when it terminates. For $0\leq i\leq I-1$, we call $(B_i,D_i)$ a \emph{Scarf pair}. 
\end{definition}

Hence, an iteration always starts with a Scarf pair and ends up with a new Scarf pair if the algorithm does not terminate. By Lemma~\ref{lem:appcplemma} and Lemma~\ref{lem:appoplemma}, any Scarf pair $(B_i,D_i)$ has $|B_i\cap D_i|=n-1$.

\begin{lemma}\label{lem:scarf-halts}
Throughout the algorithm, $B_{i-1}\neq D_{i-1}$ and $B_i=D_i$ if and only if in the $i$-th iteration column $1$ leaves the basis $B_{i-1}$ or column $1$ is introduced in the ordinal basis $D_{i-1}$. One of these two occurrences happen after a finite number of iterations, i.e., $I<\infty$ in Definition~\ref{def:iter-scarfpair}. In particular, a dominating vertex exists.
\end{lemma}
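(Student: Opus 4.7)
My plan is to establish the three assertions of the lemma in sequence: the halting characterization, the finiteness $I < \infty$, and (as an immediate corollary) the existence of a dominating vertex.

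For the halting characterization, I would prove by induction on $i$ the invariant that, as long as Scarf's algorithm has not halted by iteration $i$, the Scarf pair $(B_i, D_i)$ satisfies $1 \in B_i \setminus D_i$. The base case follows from the initialization. For the inductive step, write $B_{i-1} \setminus D_{i-1} = \{1\}$ and $D_{i-1} \setminus B_{i-1} = \{j_t\}$; the cardinal pivot removes some $j_\ell \in B_{i-1}$, and if $j_\ell = 1$ then $B_i = D_{i-1}$ and the algorithm halts (first halting case). Otherwise column $1$ survives and the ordinal pivot introduces some $j^* \notin D_{i-1}$; if $j^* = 1$ then $|B_i \cap D_i|$ jumps from $n-1$ to $n$, so $B_i = D_i$ and the algorithm halts (second halting case). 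In all other cases, the combination $j^* \notin D_{i-1}$ and $B_i \setminus D_{i-1} = \{1\}$ forces $j^* \notin B_i$, so $1 \in B_i \setminus D_i$ is preserved. Both directions of the ``if and only if'' follow immediately from this case split.

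For finiteness, I would run a Lemke-style path-following argument on the directed graph $\mathcal{G}$ whose vertices are the Scarf pairs $(B, D)$ with $1 \in B \setminus D$ and whose arcs record one iteration of the algorithm. Determinism gives out-degree at most $1$; to preclude cycles it suffices to show in-degree at most $1$ at every vertex and in-degree $0$ at the initial pair. For in-degree at most $1$ at a pair $(B', D')$: the entering ordinal column $j^*$ in any iteration is forced to be the unique element of $D' \setminus B'$ (by the case analysis above), so Lemma~\ref{lem:appoplemma} uniquely reverses the ordinal pivot to recover the predecessor ordinal basis $D$, and Lemma~\ref{lem:appcplemma} uniquely reverses the cardinal pivot to recover the predecessor feasible basis $B$. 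For in-degree $0$ at $(B_0, D_0)$: any predecessor would require the reverse ordinal pivot at $D_0$ removing $j_0$ to succeed, but applying Definition~\ref{def:appopdef} gives $\bar u_1 = \min_{j \in \{2, \ldots, n\}} c_{1, j}$, and the ordinal-matrix inequality $c_{1, k} < c_{1, j}$ for every $k \in \{1\} \cup \{n+1, \ldots, n+m\}$ and $j \in \{2, \ldots, n\}$ forces $c_{1, k} < \bar u_1$ for all $k \notin D_0$. Hence the set $K$ in Definition~\ref{def:appopdef} is empty and no predecessor exists. Consequently the trajectory from $(B_0, D_0)$ is a simple path in the finite digraph $\mathcal{G}$ and terminates in at most $|\mathcal{G}| < \infty$ iterations at a halting configuration.

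At termination, $B_I = D_I$ is simultaneously a feasible and an ordinal basis, i.e., a dominating basis; the associated vertex of~\eqref{eq:Ax-leq-b} is a dominating vertex, establishing Theorem~\ref{thm:scarflemma} as a byproduct. The main obstacle will be the in-degree-$0$ verification for $(B_0, D_0)$: this is the one place where the initialization choice $j_0 = \arg\max_{k > n} c_{1, k}$ (which originally serves to make $D_0$ an ordinal basis) combines with the row-wise structure of the ordinal matrix $C$ to rule out the trajectory ever cycling back to its start.
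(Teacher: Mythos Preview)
Your proposal is correct and follows precisely the classical Lemke-style path-following argument that Scarf gave in~\cite{scarf1967core}, which is exactly what the paper defers to (the section opens by noting that ``Missing proofs can be found in~\cite{scarf1967core}'' and Lemma~\ref{lem:scarf-halts} is stated without proof). One small point worth making explicit in your in-degree-zero argument: you implicitly use that $i_r \neq 1$ when concluding that the row-$1$ constraint $c_{1,k} > \bar u_1$ is part of the defining conditions for $K$; this holds because removing $j_0$ from $D_0$ leaves $\{2,\dots,n\}$, whose reference column $j_r$ lies in $\{2,\dots,n\}$ and has $i_r = j_r \geq 2$.
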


We next present properties shared by ordinal bases visited by our algorithm. 

\begin{definition}[Almost-feasible ordinal bases]\label{almostfeasibledef}
An ordinal basis $D=\{j_1,\dots,j_n\}$ is \emph{almost-feasible} if $1\notin D$, and there exists a feasible basis $B$ such that $B=\{1,j_1,\dots,j_{t-1},j_{t+1},\dots,j_n\}$ for some $t \in [n]$. That is, $1\in B$ and $R=B\cap D$ has cardinality $n-1$. We say that $B$ is \emph{associated} to $D$ and call the set $R$ \emph{remaining columns}, and let $A_R,D_R$ denote the submatrices of $A,C$ corresponding to the index set $R$, respectively. 
\end{definition}

By Lemma~\ref{lem:scarf-halts}, Scarf's algorithm terminates if column $1$ enters the ordinal basis $D$ or leaves the feasible basis $B$. Moreover, throughout the algorithm, for each Scarf pair $(B,D)$, by construction $B$ and $D$ differ in at most one entry. Hence, we deduce the following. 

\begin{lemma}
Let $D$ be an ordinal basis visited by some execution of Scarf's algorithm on the input from Theorem~\ref{thm:scarflemma}. Then either $D$ is the final basis visited by the algorithm, or $D$ is almost-feasible. 
\end{lemma}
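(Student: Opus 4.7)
The plan is to leverage Lemma~\ref{lem:scarf-halts} to isolate a simple invariant maintained across all non-terminal iterations, namely that column $1$ is always present in the feasible basis and absent from the ordinal basis. Once this invariant is established, combining it with the Scarf-pair condition $|B_i \cap D_i| = n-1$ from Definition~\ref{def:iter-scarfpair} will immediately produce the almost-feasible decomposition demanded by Definition~\ref{almostfeasibledef}.

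First I would verify the invariant at initialization. By Scarf's initialization rule recalled in Section~\ref{sec:scarf-lemma}, $B_0 = \{1, 2, \dots, n\}$ and $D_0 = \{j_0, 2, \dots, n\}$ with $j_0 > n$. Thus $1 \in B_0 \setminus D_0$, and $B_0 \cap D_0 = \{2, \dots, n\}$ has cardinality $n-1$. Setting $R = \{2,\dots,n\}$, this already witnesses that $D_0$ is almost-feasible with associated feasible basis $B_0$.

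For the inductive step, I would apply Lemma~\ref{lem:scarf-halts} in its contrapositive form: since the algorithm has not halted by iteration $i$ (for $i \le I-1$), in no iteration $1, \dots, i$ has column $1$ left the current feasible basis through a cardinal pivot, nor has column $1$ entered the current ordinal basis through an ordinal pivot. Hence $1 \in B_i$ and $1 \notin D_i$. Combined with the Scarf-pair property $|B_i \cap D_i| = n - 1$, this forces $B_i \setminus D_i = \{1\}$, so setting $R = B_i \cap D_i$ yields $B_i = \{1\} \cup R$ and $D_i = R \cup \{j\}$ for some $j \neq 1$, which is precisely the decomposition required by Definition~\ref{almostfeasibledef}.

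The only ordinal basis visited by any execution that is not covered by the above is $D_I$, which by Definition~\ref{def:iter-scarfpair} is the final basis visited; this falls into the first branch of the dichotomy. I do not expect a real obstacle: the argument is essentially a careful bookkeeping of which column of $B_i$ lies outside $D_i$, with Lemma~\ref{lem:scarf-halts} doing the conceptual work of tying termination to the status of column $1$.
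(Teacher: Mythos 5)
Your proposal is correct and follows the same route as the paper: the paper also deduces the lemma directly from Lemma~\ref{lem:scarf-halts} (column $1$ leaves $B$ or enters $D$ only at termination, so $1\in B_i$ and $1\notin D_i$ throughout) together with the Scarf-pair property that $B_i$ and $D_i$ differ in exactly one element. Your write-up just makes the induction and the base case explicit.
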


\subsection{Dealing with Degeneracy}\label{sec:scarf:degeneracy}

Notice that in the framework above, the behaviour of Scarf's algorithm is uniquely determined by the input. In contrast, when Scarf's algorithm is applied to a degenerate polytope, i.e.,  such that there are strictly more than $n$ constraints that are tight at some vertex, then many options may be possible for cardinal pivoting. Hence, the output of Scarf's algorithm is not unique and depends on the cardinal pivoting rule. Moreover, cycling may happen. However, definitions and properties from Section~\ref{sec:scarf:almost-feasible-etc} apply, with the exception of finite convergence, which is not guaranteed. 

\begin{lemma}\label{lem:scarf-halts-2}
$B_{i-1}\neq D_{i-1}$ and $B_i=D_i$ if and only if in the $i$-th iteration column $1$ leaves the basis $B_{i-1}$ or column $1$ is introduced in the ordinal basis $D_{i-1}$.
\end{lemma}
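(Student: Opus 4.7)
The plan is to prove the statement by tracking the symmetric difference between $B_{i-1}$ and $D_{i-1}$ throughout the algorithm, and showing that an algorithm invariant pins column $1$ as the unique element of $B_{i-1}\setminus D_{i-1}$ at the start of every iteration until termination. The crucial observation is that the argument originally used in the non-degenerate setting (Lemma~\ref{lem:scarf-halts}) is purely combinatorial: it only exploits the fact that a cardinal pivot swaps one column in, one out, and likewise for an ordinal pivot. Since Lemma~\ref{lem:appcplemma} and Lemma~\ref{lem:appoplemma} give such pivots (with the cardinal one possibly non-unique under degeneracy), the same bookkeeping should carry over to Lemma~\ref{lem:scarf-halts-2}.

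First, I would verify the invariant at initialization. By construction, $B_0=\{1,2,\dots,n\}$ and $D_0=\{j_0,2,3,\dots,n\}$ with $j_0>n$, so $B_0\setminus D_0=\{1\}$ and $D_0\setminus B_0=\{j_0\}$. In particular, $|B_0\cap D_0|=n-1$ and $1\in B_0$, $1\notin D_0$.

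Second, I would run the inductive step on a Scarf pair $(B_{i-1},D_{i-1})$ satisfying $B_{i-1}\setminus D_{i-1}=\{1\}$, and consider the cardinal pivot that brings in $j_t$, the unique element of $D_{i-1}\setminus B_{i-1}$, and removes some $j_\ell\in B_{i-1}$. There are two sub-cases. If $j_\ell=1$, then $B_i=B_{i-1}\setminus\{1\}\cup\{j_t\}=D_{i-1}$, so no ordinal pivot is performed and $B_i=D_i$; this is precisely the event ``column $1$ leaves $B_{i-1}$.'' If $j_\ell\neq 1$, then $j_\ell\in B_{i-1}\cap D_{i-1}$, the ordinal pivot fires and replaces $j_\ell$ by some unique $j^*\notin D_{i-1}$ (Lemma~\ref{lem:appoplemma}); now $B_i\setminus D_{i-1}=\{1\}$ is preserved, and $D_i\setminus B_i=\{j^*\}$ unless $j^*=1$. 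If $j^*=1$, then $D_i=D_{i-1}\setminus\{j_\ell\}\cup\{1\}$ coincides with $B_i$ and the algorithm halts; this is the event ``column $1$ enters $D_{i-1}$.'' Otherwise, $B_i\setminus D_i=\{1\}$, preserving the invariant and confirming that the algorithm does not terminate at iteration $i$.

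Combining the two directions yields the lemma: the algorithm halts in iteration $i$ exactly when the leaving column of the cardinal pivot or the entering column of the ordinal pivot equals $1$. The main (minor) subtlety to highlight is that under degeneracy the cardinal pivot may admit several valid choices of $j_\ell$, but the argument above is agnostic to which choice is made, because it only relies on the cardinality and identity tracking of $B_i\setminus D_i$. No finiteness or convergence claim is needed, which is precisely the content missing from Lemma~\ref{lem:scarf-halts-2} compared to Lemma~\ref{lem:scarf-halts}. I do not anticipate a genuine obstacle; the work is essentially to state the invariant cleanly and verify it step-by-step.
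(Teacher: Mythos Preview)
Your proposal is correct and follows essentially the same approach as the paper. The paper does not give an explicit proof of Lemma~\ref{lem:scarf-halts-2}; it simply remarks that the properties from the non-degenerate case (Lemma~\ref{lem:scarf-halts}, whose proof is deferred to~\cite{scarf1967core}) carry over except for finite convergence, and your invariant-tracking argument is precisely the standard combinatorial justification underlying Scarf's original proof, correctly noting that the choice of $j_\ell$ under degeneracy is immaterial to the set-membership bookkeeping.
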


In Section~\ref{sec:algo-bipartite}, we deal with matching polytope that are highly degenerate, and we show that with our cardinal pivoting rules Scarf's algorithm converges, and does so in polynomial time. 

There is however another standard way to deal with degeneracy: perturbation (this is discussed for the marriage case in Section~\ref{sec:perturbation}). Indeed, perturb the right hand side vector to $b'$ at the beginning to make the polytope nondegenerate. Then, Scarf's algorithm outputs a dominating basis $B$ w.r.t.~$(A,b',C)$. This implies that $B$ is an ordinal basis for $C$. If $B$ is also an $(A,b)$ basis, then $B$ is an $(A,b)$ basis~\cite{bertsimas1997introduction}, then it is dominating for $(A,b,C)$.
 Therefore, if the perturbation is small enough so that every $(A,b)$ is also an $(A,b')$ basis, then the output basis corresponds to a dominating vertex of the original polytope. 

\section{Additional Facts and Notations}\label{sec:notation}

We now discuss some facts and notation that are used throughout the rest of the paper. 

\subsection{Graphs, Edges, Matching}\label{sec:def:graphs-etc}

Recall that we start with the input $\mathcal{I}=(G(V,E),\succ)$, where $G$ is a graph with a set $V$ of $n$ nodes, $E$ is the set of edges given by the (disjoint) union of  $n$ \emph{loops} $E^\ell$ and $m$ valid edges $E^v$. For $v\in V$ and $e\in E$, we say $v$ is incident to $e$ if $v\in e$. $\succ$ is a preference system such that for every $v\in V$, $\succ_v$ ranks all the edges incident to $v$ and the unique loop on $v$ such that $e\succ_v (v,v)$ for every $e\in E^{v}$ incident to $v$. When $e=(v,u), e'=(v,u')$, we also write $u \succ_v u'$ when $e \succ_v e'$. 

For a graph $G(V,E)$, define the \emph{degree} of a node $v \in V$ as the number of edges in $G$ (including loops) incident to $v$, and denote it by $\deg_G(v)$. Note that, for the same node $v\in V$, the degree is subject to $G$. For example, $\deg_{G_B}(v)$, $\deg_{G_D}(v)$ may be different.

We can then redefine the concept of matching. We say that an edge $e$ is \emph{incident} to a node $v$ if the latter is an endpoint of the former. 
A  \emph{matching} is an edge set $\mu$ such that for each node $v$, there is \emph{exactly one} edge $e\in\mu$ incident to $v$. Equivalently, $\mu\subset E$ is a matching if the subgraph $G_\mu=(V,\mu)$ is such that every node $v \in V$ has $\deg_{G_\mu}(v)=1$. We say $\mu$ \emph{properly matches} $v$ if a valid edge in $\mu$ is incident to $v$. Hence, if a matching $\mu$ does not properly match node $v$, then the loop $e\in \mu$, which implies $v$ is unmatched in the classical sense.

\subsection{Bases and Related Objects}

Because of the structure of our input, the matrix $A$ (resp.~$C$) has exactly one row per agent, and exactly one column per edge, including loops. Therefore, we sometime overload notation and use the same symbol to denote an agent/edge and a row/column, with the exact meaning being always clear from the context. More precisely:

\begin{itemize}
    \item $m$, $w$, $v$ (possibly with subscripts/superscripts) are used to refer to agents (with $m \in M$, $w \in W$ and $v \in V$) or to the corresponding node or row index in either the $A$ or $C$ matrix. 
    \item $(m,m)$, $(w,w)$, $(m,w)$, $(v_i,v_\ell)$ denote edges or the corresponding column indices of $A$ or $C$. Note that we adopt the ordered pair notation even though all graphs are undirected. This is motivated by that fact that the ordered pair allows us to distinguish between men (first entry) and woman (second entry). We sometime also denote an edge by $e_j$, with $j\le n$ for loops and $j>n$ for valid edges.  Accordingly, $a_j$, $c_j\in \Q^n$ are the column vectors corresponding to $e_j$ in matrix $A$, $C$, respectively.
\end{itemize}

We similarly overload notation for bases / ordinal bases as follows:

\begin{itemize}    
    \item $B$ is some feasible basis for $A$. It can be an index set of $n$ columns, or of $n$ edges of $G$ (including loops). $A_B$ is the submatrix obtained from $A$ by restricting to columns of $B$. $G_B=(V,E_B)$ is a subgraph of $G$ only maintaining the edges in $B$.

    \item $D$ is some ordinal basis of $C$. It can be an index set of $n$ columns, or of $n$ edges of $G$ (including loops). $C_D$ is the submatrix obtained from $C$ by restricting to columns of $D$. $G_D=(V,E_D)$ is a subgraph of $G$ only maintaining the edges in $D$.
\end{itemize}        

Other relevant notation includes vectors associated to bases:    
\begin{itemize}
\item $x\in\R^{n+m}$ is a feasible solution of the desired polytope. It assigns weights on every edge on $E$. The \emph{$x$-value} of an edge, $x_e$, is the value of the component of $x$ corresponding to the edge $e$. 
\item For a feasible basis $B$, let $x$ be the basic feasible solution associated with $B$. Define $\mu_B$ as the matching with edges given by $supp(x)$, i.e., $e\in \mu_B$ iff $x_e>0$.
    \item For an ordinal basis $D$, let $u_D\in \Z^n$ denote the utility vector associated with $D$, i.e., $(u_D)_i=\min_{j\in D} c_{ij}$. If there is no ambiguity, we omit the subscript $D$ and just denote by $u$ the utility vector of $D$.
\end{itemize}

\subsection{Paths, Cycles}\label{sec:def:paths-etc}

We redefine the concept of path in graphs as follow. Let $F\subset [n+m]$ be a subset of columns and $A_F$ be the submatrix of $A$ restricted to columns in $F$. 
    Consider the graph $G_F=(V,E_F)$ where $E_F$ is the edge set corresponding to columns in $F$.
    A path $P$ in $G_F$ is defined as a sequence of edges
    $$P=(v_1,v_2),(v_2,v_3),\dots,(v_{\ell-1},v_\ell)$$
    with $v_1\neq v_\ell $, such that the two nodes in some parentheses can coincide, no pair is repeated, and the valid edges in $P$ form a path in the classical sense. Hence, paths may contain loops. A cycle $Q$ is defined as a cycle in the classical sense. We denote by $V_P$ and $E_P$ the vertices and edges of a path, respectively. A similar notation is employed for $Q$. Sometimes, in order to stress its vertices, when denoting a path $P$ (resp.~a cycle $Q$) we also add between pairs of consecutive edges the vertex they share.

    Let $x\in \R^{n+m}$. We say $P$ (resp., $Q$) is \emph{$x$-alternating} if the $x$-value of edges on $P$ (resp., $Q$) alternate between $0$ and $1$. We say $P$ is \emph{$x$-augmenting} if it is $x$-alternating and the first edge is a loop with $x$-value $1$, i.e., $v_1=v_2$, and $x_{(v_1,v_1)}=1$. An illustration of the concepts of path and cycle is given in Figure~\ref{fig:bipartite-pathandcycle}.

\subsection{$v$-disliked Edges}

Thanks to the additional properties of Scarf's algorithm discussed in Section~\ref{sec:ReviewScarf}, we can now formalize the concept of \emph{disliked edge} first mentioned in Section~\ref{sec:hl-poly}. 

\begin{definition}[$v$-disliked Edge] \label{def:disliked}
For an ordinal basis $D$ and $i\in[n]$, let $j\in D$ be such that $u_i=c_{ij}$, where the existence and uniqueness of $j$ follows from Proposition~\ref{prop:app1.1}. We call $e_j$ the \emph{$v_i$-disliked} edge (in $D$). 
\end{definition}

In other words, there is a bijection between the rows (nodes) and columns (edges) given by $D$, and we denote this bijection by saying that $v_i$ \emph{dislikes} $e_j$. The term ``dislikes'' comes from the fact that, if one interprets entries in $C$ as $v_i$'s evaluation of \emph{all} edges of $G$ (including those not incident to $v_i$), then the $v_i$-disliked edge will be $e_{\bar \jmath}$ for the unique minimizer $\bar \jmath$ of $c_{ij}$ over all $j \in D$ -- hence the edge achieving the worst evaluation according to $v_i$. An important observation is that the bijection is subject to $D$: When we say $v_i$ dislikes $e_j$, we need to specify which $D$ is referred to -- but if there is no ambiguity, we omit $D$.

\begin{figure}
    \centering
    \includegraphics[scale=1.4]{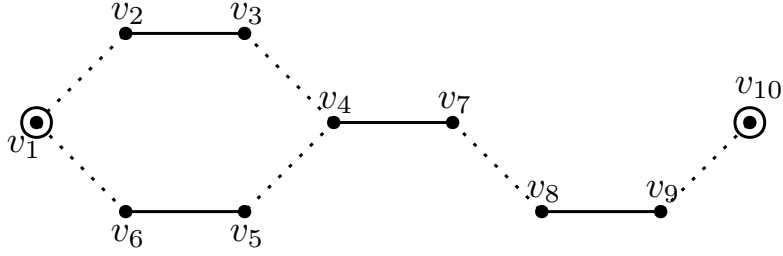}
    \caption{An illustration of the concepts of path and cycle, with the convention that solid edges (resp.~dotted edges) are associated to variables from $F$ with $x$-value $1$ (resp.~$x$-value $0$). $P=(v_1,v_1),(v_1,v_2),(v_2,v_3),(v_3,v_4),(v_4,v_7),(v_7,v_8),(v_8,v_9),(v_9,v_{10}),(v_{10},v_{10})$ is an $x$-augmenting path, and any sequential subset of $P$ is an $x$-alternating path. $Q=(v_1,v_2),(v_2,v_3),(v_3,v_4),(v_4,v_5),(v_5,v_6),(v_6,v_1)$ is a cycle. $Q$ is not $x$-alternating since there are consecutive $0$'s such as $(v_3,v_4),(v_4,v_5)$.}
    \label{fig:bipartite-pathandcycle}
\end{figure}

\section{Polynomiality of Scarf's Algorithm in the Marriage Model}\label{sec:algo-bipartite}

\subsection{The Bipartite Matching Polytope and an Ordinal Matrix Associated to a Marriage Instance}\label{sec:matrixdesign}

We introduce here the ordinal matrix $C$ used in Theorem~\ref{main:bipartite-poly} and its features. Interestingly, $C$ is a special case of the matrix defined by~\cite{biro2016matching}.

Consider a complete instance ${\cal I}$ with $k$ men and $k$ women, denoted respectively be $M=\{m_1,\dots,m_k\}$ and $W=\{w_1,\dots,w_k\}$ and arranged in a complete graph $G$. For $i \in [k]$, we also denote $m_i$ by $v_i$ and $w_i$ by $v_{k+i}$. Recall $n=2k$, $m=2k+k^2$. For edges, we let $e_1,\dots, e_n$ denote the loops containing nodes $v_1,\dots,v_n$ and for $i \in [n]$, $e_{k(i+1)+1},e_{k(i+1)+2},\dots,e_{k(i+1)+k}$ denote the edges that connect $m_i$ and women following the decreasing order of $m_i$'s preference. 

Hence, with the definition of edge given in Section~\ref{sec:def:graphs-etc},~\eqref{eq:Ax-leq-b} defines the matching polytope of $G$ when the entry $(i,j)$ of the $A$ matrix is given as follows: \begin{displaymath}
a_{ij}=\left\{
\begin{array}{cc}
   1,  & \textrm{if $e_j$ is incident to $v_i$} \\
   0,  & \textrm{others}
\end{array}.
\right.
\end{displaymath}

For the construction of $C$, we define
$$\mathcal{S}=\{0\}, \mathcal{M}=\{1,\dots,k\}, \mathcal{L}=\{k+1,\dots,k^2\}, \mathcal{ XL}=\{k^2+1,\dots,k^2+2k-1\}.$$
For any row $i$, assign 
$$c_{ij}=k+1-\ell\textrm{, if $a_{ij}=1$ and $e_j$ is the $\ell$-th best candidate for node $v_i$}.$$
This implies $c_{ii}=0\in \mathcal{S}$, and $c_{ij}\in \mathcal{M}$ if $e_j$ is a valid edge incident to $v_i$. All entries $c_{ij}$ where $j$ does not correspond to edges incident to $i$ have values in ${\cal L}\cup {\cal XL}$, as discussed below.

If $e_j$ is a valid edge but not incident to $v_i$, then assign $c_{ij}$ a number in $\mathcal{L}$. We have $k(k-1)$ such edges, matching the cardinality of $\mathcal{L}$. We distribute the numbers in $\mathcal{L}$ according to decreasing order from left to right. In other words, if $e_j,e_\ell$ are not incident to $v_i$ and $j,\ell>2k$, then $c_{ij},c_{i\ell}\in \mathcal{L}$ and $c_{ij}>c_{i\ell}$ if and only if $j<\ell$.\par
If $e_j$ is a loop, but $j\neq i$, we assign $c_{ij}$ a number in $\mathcal{XL}$. Similarly, we distribute the numbers in $\mathcal{XL}$ according to a decreasing order from left to right on those vacant positions. Note that $C$ is a \emph{consistent} ordinal matrix, according to the definitions given in Section~\ref{sec:scarf-lemma} and Section~\ref{sec:overview-expressing-sm}.

\begin{example}\label{ex:bi-Cmatrix} Given the instance with $k=2$ and the following preference lists:
$$m_1: w_2\succ w_1, \quad m_2: w_1\succ w_2, \quad  w_1: m_2\succ m_1, \quad 
w_2: m_1\succ m_2,$$
we can construct the $C$ matrix:
$$
\begin{pNiceMatrix}[first-row,first-col][columns-width=1.2cm]
    & m_1 & m_2 & w_1 & w_2 & (m_1,w_2) & (m_1,w_1) & (m_2,w_1) & (m_2,w_2)\\
    m_1 & 0 & 7 & 6 & 5 & 2 & 1 & 4 & 3\\
    m_2 & 7 & 0 & 6 & 5 & 4 & 3 & 2 & 1\\
    w_1 & 7 & 6 & 0 & 5 & 4 & 1 & 2 & 3\\
    w_2 & 7 & 6 & 5 & 0 & 2 & 4 & 3 & 1
\end{pNiceMatrix}.
$$
In this instance, $\mathcal{L}=\{3,4\}$, $\mathcal{XL}=\{5,6,7\}$, and they are assigned in decreasing order from left to righht on each row.\end{example}

The way we distribute the numbers in $\mathcal{L}$, $\mathcal{XL}$ is irrelevant to the next theorem, so there are multiple $C$'s for which the following result is valid. However, we set them as discussed above in order to: (i) satisfy the ordinal matrix conditions (cf. Section~\ref{sec:scarf-lemma}); (ii) provide a clear structure of ordinal bases, on which we will build upon in the next section. Since matrix $C$ is a special case of the matrix from Theorem~\ref{thm:existence-sm} from~\cite{biro2016matching}, we conclude the following.

\begin{theorem}[\cite{biro2016matching}]\label{thm:C-valid-for-marriage}
Any dominating basis for (A,b,C) defined above corresponds to a stable matching of ${\cal I}$.
\end{theorem}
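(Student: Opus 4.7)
The plan is to translate the two defining features of a dominating basis $B$---that it is a feasible $(A,b)$-basis and that it is an ordinal basis for $C$---into the matching property and the stability property of the associated object, respectively. Since the bipartite matching polytope is integral, the vertex $x$ associated with $B$ is $0/1$, so its support restricted to the valid edges defines a matching $\mu=\mu_B$ of $G$, with the loop $e_i\in B$ whenever $v_i$ is unmatched. A preliminary observation, immediate from $A_B x_B=b$, is that every vertex $v_i$ is incident to at least one edge of $B$: otherwise row $i$ of $A_B$ would vanish. Combined with the scale separation engineered in Section~\ref{sec:matrixdesign}---$c_{i,j}\in\mathcal{S}\cup\mathcal{M}$ precisely when $e_j$ is incident to $v_i$, while $c_{i,j}\in\mathcal{L}\cup\mathcal{XL}$ otherwise---this yields $u_i\in\mathcal{S}\cup\mathcal{M}$ for every row $i$.

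I would then argue by contradiction: assume $(m,w)$ blocks $\mu$ and set $e=(m,w)$. If $e\in B$ (so $x_e=0$ since $e\notin\mu$), Proposition~\ref{prop:app1.1} supplies a unique row $i^\ast$ with $c_{i^\ast,e}=u_{i^\ast}$. Since $u_{i^\ast}\in\mathcal{S}\cup\mathcal{M}$ but $c_{i,e}\in\mathcal{L}$ for every $v_i\notin\{m,w\}$ (as $e$ is a valid edge not incident to such $v_i$), we must have $v_{i^\ast}\in\{m,w\}$. Taking WLOG $i^\ast=m$, we get $u_m=c_{m,e}$, and $(m,\mu(m))\in B$ forces $c_{m,\mu(m)}\ge u_m=c_{m,e}$; this, together with the distinctness of the entries of row $m$ of $C$ and $(m,\mu(m))\neq e$, strengthens to $\mu(m)\succ_m w$, contradicting $w\succ_m\mu(m)$. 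If instead $e\notin B$, the ordinal basis condition yields some $i$ with $u_i\ge c_{i,e}$; scale separation again forces $v_i\in\{m,w\}$, and then either $(m,\mu(m))\in B$ provides the chain $c_{m,\mu(m)}\ge u_m\ge c_{m,e}$ that sharpens as above, or $m$ is unmatched so that $e_m\in B$ forces $u_m=0$, incompatible with $c_{m,e}\in\mathcal{M}$. The case $v_i=w$ is handled symmetrically via $m\succ_w\mu(w)$.

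The main conceptual ingredient is the scale-separation argument that pins the unique row minimizer of any valid-edge column to one of its two endpoints; once this is in hand, the ordinal basis inequality applied to a hypothetical blocking edge is directly incompatible with the strict preferences defining the block. I do not anticipate any technical obstacle beyond carefully running through the two cases and their symmetric sub-cases, each of which follows the same template from consistency of $C$ together with the fact that every agent's matched edge (or loop, if unmatched) lies in $B$.
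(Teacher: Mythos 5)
Your proof is correct. Note that the paper itself does not prove Theorem~\ref{thm:C-valid-for-marriage} at all: it imports the statement from Bir\'o--Fleiner--Irving~\cite{biro2016matching}, observing only that the matrix $C$ of Section~\ref{sec:matrixdesign} is a special case of theirs. Your argument is the standard one and is sound: integrality of $\{x\ge 0: Ax=b\}$ (total unimodularity of $(I|A')$) turns the basic feasible solution into a $0/1$ matching $\mu$; the fact that every row of $A_B$ is nonzero pins $u_i\in\mathcal{S}\cup\mathcal{M}$ for all $i$; and the scale separation between $\mathcal{S}\cup\mathcal{M}$ and $\mathcal{L}\cup\mathcal{XL}$ forces any row $i$ witnessing the ordinal-basis inequality $u_i\ge c_{i,e}$ for a hypothetical blocking edge $e=(m,w)$ to be one of its endpoints, at which point $c_{m,(m,\mu(m))}\ge u_m\ge c_{m,e}$ (or its $w$-counterpart) contradicts the blocking condition via consistency of $C$. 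This is exactly the template the paper does use elsewhere, namely in the proof of Theorem~\ref{thm:intermediatematching}, where a column $c_e>u$ is exhibited to contradict domination. Your split into $e\in B$ versus $e\notin B$ is harmless but unnecessary: in both cases the only fact needed is that some row $i$ satisfies $u_i\ge c_{i,e}$, which holds for every column of $C$ by the definition of an ordinal basis. The only sub-case worth stating a touch more carefully is $i^*=m$ with $m$ unmatched in the $e\in B$ branch, where the chain $0=c_{m,(m,m)}\ge u_m=c_{m,e}\ge 1$ is already absurd rather than yielding a preference reversal; as written your conclusion still goes through, since consistency makes $c_{m,(m,m)}>c_{m,e}$ equivalent to $(m,m)\succ_m(m,w)$, which is false by construction of $\succ_m$.
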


The design and analysis of the algorithm follow the high-level view of the polynomial-time pivoting rule given in Section~\ref{sec:hl-poly} and are organized as follows. In Section~\ref{sec:basic-of-scarf}, we introduce definitions and show the forest with single loops structure of basis of the matching polytope. In Section~\ref{obsec} we discuss properties that ordinal basis will have throughout the algorithm, which allow us to define the separator and investigate the disliked relation. Features of cardinal and ordinal pivots are presented in Section~\ref{sec:cardinal-pivot} and Section~\ref{sec:ordinal-pivot}, respectively. We show the convergence of the algorithm in Section~\ref{sec:convergence}. In Section~\ref{sec:perturbation}, we show that polynomial-time convergence can also be proved if we apply Scarf's algorithm to a suitably non-degenerate perturbation of the bipartite matching polytope (recall that, in a non-degenerate polytope, the behaviour of Scarf's algorithm is uniquely determined).

\subsection{Basic Properties}

\subsubsection{Structure of $B$: Forest with Single Loops}\label{sec:basic-of-scarf}

Next two lemmas characterize the bases of the matching polytope. The proofs are fairly standard (see, e.g., the characterization of bases in network polytopes~\cite{bertsimas1997introduction}) and given for completeness in Appendix~\ref{sec:app:proof-basis}. \begin{lemma}\label{lem:forest}
Let $B$ be a set of $n$ columns of $A$ and $E_B=E_B^v\cup E_B^\ell$ be the set of corresponding edges of $G$, where $E_B^v$ consists of all valid edges and $E_B^\ell$ consists of all loops in $E_B$. $B$ is a (possibly infeasible) basis of $A$ if and only if
\begin{enumerate}
    \item $(V,E_B^v)$ is a forest.
    \item Each connected component 
    of the forest has exactly one loop in $E_B^\ell$.
\end{enumerate}
\end{lemma}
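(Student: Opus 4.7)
\medskip

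My plan is to prove both directions by exploiting the bipartite incidence structure of $A$, together with the observation that $A=(I\mid A')$, so loop columns are standard basis vectors $e_i$ while valid-edge columns have exactly two ones, one in a row indexed by a man and one in a row indexed by a woman. Throughout, I will use the linear functional $\phi_S:\R^n\to\R$ defined, for any subset $S\subseteq V$, by $\phi_S(x)=\sum_{m\in S\cap M} x_m-\sum_{w\in S\cap W} x_w$.

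\smallskip

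For the forward direction, suppose $B$ is a basis. First I show $(V,E_B^v)$ is a forest. If it contained a cycle, it would be an even cycle $v_1,v_2,\dots,v_{2s},v_1$ by bipartiteness; assigning alternating signs $+1,-1,+1,\dots$ to its edges produces a nontrivial linear combination of the corresponding valid-edge columns of $A$ that vanishes on every row (each vertex on the cycle is incident to exactly two cycle edges of opposite signs), contradicting linear independence. So $(V,E_B^v)$ is a forest with, say, $t$ connected components; it therefore has $n-t$ edges, which forces $|E_B^\ell|=t$. For the distribution of loops, suppose a component $\Lambda$ with vertex set $S$ contains no loop from $E_B^\ell$. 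Restrict attention to the submatrix of $A_B$ formed by the columns corresponding to edges whose endpoints lie in $S$ (which, by forestness of $E_B^v$ and absence of loops in $\Lambda$, are exactly the $|S|-1$ tree edges of $\Lambda$). Every such valid-edge column $a_e$ satisfies $\phi_S(a_e)=0$, so these columns lie in a hyperplane of the $|S|$-dimensional coordinate subspace indexed by $S$. Since all other columns of $A_B$ have no support on rows indexed by $S$, rows indexed by $S$ of $A_B$ would be linearly dependent (the row vector $\phi_S$ gives a zero combination), contradicting that $A_B$ is nonsingular. So each component contains at least one loop; by the pigeonhole count $|E_B^\ell|=t$, each contains exactly one.

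\smallskip

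For the reverse direction, assume $(V,E_B^v)$ is a forest with one loop per component; then $|B|=(n-t)+t=n$. After permuting rows and columns, $A_B$ is block diagonal with one block per connected component $\Lambda_i$ on vertex set $S_i$, of size $|S_i|\times|S_i|$. It therefore suffices to show that each such block is nonsingular. A block consists of the incidence columns of a tree on $S_i$ together with the single loop column $e_{v^*}$ for the unique vertex $v^*\in S_i$ carrying a loop in $E_B^\ell$. The tree-incidence columns are linearly independent: iteratively pick a leaf $u$, whose unique incident tree column is the only column with a nonzero entry in row $u$, delete row $u$ and that column, and continue on the remaining tree. This yields rank $|S_i|-1$ for the tree part, and the functional $\phi_{S_i}$ vanishes on all these tree columns but evaluates to $\pm 1$ on the loop column $e_{v^*}$, showing $e_{v^*}$ is linearly independent from them. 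Hence the block has full rank $|S_i|$, and $A_B$ is nonsingular, so $B$ is a basis.

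\smallskip

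The only subtlety I anticipate is keeping the bipartite hypothesis in the foreground: both the cycle argument (which needs even cycles so alternating signs yield a dependence) and the no-loop component argument (which needs the functional $\phi_{S}$ to annihilate valid-edge columns) rely essentially on bipartiteness. Otherwise the argument is a straightforward adaptation of the standard basis characterization for network matrices.
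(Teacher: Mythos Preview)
Your proof is correct and follows the same overall architecture as the paper's: even-cycle dependence for the forest claim, a rank/dependence argument to force a loop in each component, and block-diagonal nonsingularity for the converse. The one genuine technical difference is your systematic use of the bipartite functional $\phi_S(x)=\sum_{m\in S\cap M}x_m-\sum_{w\in S\cap W}x_w$. Where the paper invokes a block-rank inequality (if $B_4=0$ and $B_3$ has $q$ rows then $B_3$ must have at least $q$ columns) to rule out a loop-free component, you exhibit an explicit nonzero row combination annihilating every column of $A_B$; and where the paper triangularizes the tree block and reads off determinant $1$, you combine leaf-stripping for the tree part with $\phi_{S_i}(e_{v^*})=\pm 1$ to certify the loop column is outside the tree span. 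Your route is slightly more self-contained and makes the reliance on bipartiteness explicit at each step, while the paper's version is closer to the standard network-matrix folklore and yields the upper-triangular form used downstream in Lemma~\ref{lem:tree}; both arrive at the same place with comparable effort.
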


\begin{lemma}\label{lem:tree}
Consider any basis $B$ of $A$. Up to permuting rows and columns, the submatrix $A_B$ of $A$ corresponding to $B$ has the form 
\begin{displaymath}
A_B=\left(\begin{array}{cccc}
    B_1 & & &  \\
     & B_2 & &  \\
      & & \ddots & \\
     & & & B_\tau
\end{array}\right) \qquad 
\hbox{where } 
B_\omega=\left(\begin{array}{cccc}
    1 & * & * & * \\
     & 1 & \ddots & * \\
    &  & \ddots & * \\
     &  & & 1
\end{array}\right),
\end{displaymath}
i.e.,  $B_\omega$ is an upper-right matrix with $1$s on diagonal, for $\omega \in [\tau]$ (in the matrices above, no entry means $0$, and $*$ means either a $0$ or $1$). Moreover, in each matrix $B_\omega$, the first column corresponds to a loop, and each other column has exactly two entries equal to $1$. 
\end{lemma}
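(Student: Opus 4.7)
}
The plan is to deduce the block structure directly from the forest-with-single-loops characterization given by Lemma~\ref{lem:forest}, and then to reveal the upper-triangular shape of each block via a rooted traversal of the corresponding tree.

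First, I would apply Lemma~\ref{lem:forest} to obtain the connected components $T_1,\dots,T_\tau$ of the subgraph $(V,E_B)$ (where valid edges form a forest and each component carries a single loop). Since an edge column of $A$ has nonzero entries only at its endpoints, and loop columns have a single nonzero entry at the corresponding vertex, the columns associated with $T_\omega$ have their support entirely within the rows associated with the vertices of $T_\omega$. Grouping rows by component and grouping columns by the component that owns them then produces the block-diagonal shape displayed in the statement, with one block $B_\omega$ per component.

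Next, I fix a component $T_\omega$ with vertex set $U_\omega$ of size $s$; by Lemma~\ref{lem:forest} it has exactly one loop, incident to some vertex $v_0\in U_\omega$, and the remaining $s-1$ columns are valid edges forming a spanning tree of $U_\omega$. Root this tree at $v_0$ and perform a BFS (or DFS), listing the vertices as $v_0,v_1,\dots,v_{s-1}$ in discovery order; each $v_i$ with $i\geq 1$ has a unique parent $v_{p(i)}$ with $p(i)<i$. I would then permute the rows of $B_\omega$ so that row $i$ corresponds to $v_i$, and permute the columns so that column $0$ is the loop at $v_0$, while for $i\geq 1$ column $i$ is the tree edge $(v_{p(i)},v_i)$. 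In this ordering, the entry in row $i$ of column $0$ is $1$ iff $i=0$, and for $j\geq 1$ the entry in row $i$ of column $j$ is $1$ iff $v_i\in\{v_{p(j)},v_j\}$, i.e.\ iff $i=j$ or $i=p(j)<j$. Hence every diagonal entry is $1$, every entry strictly below the diagonal is $0$, and the only possibly nonzero off-diagonal entries lie strictly above the diagonal, which is precisely the upper-right triangular form with $1$'s on the diagonal claimed for $B_\omega$. Moreover, column $0$ is the loop and has exactly one $1$, while each other column, being a valid edge, contributes $1$'s at exactly the two rows indexed by its endpoints.

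I do not expect any serious obstacle: the combinatorics is essentially the observation that a rooted tree admits a topological ordering of its edges in which each edge points from a smaller-indexed parent to a larger-indexed child. The only points requiring mild care are (i) ensuring the permutations of rows and columns are simultaneous and consistent across blocks, which is handled by the component-wise grouping in the first step, and (ii) verifying that the $*$-entries can indeed be either $0$ or $1$: a $1$ above the diagonal in column $j$ records that $v_{p(j)}$, the parent of $v_j$, appeared earlier in the traversal, while all other entries above the diagonal are forced to be $0$ by the same argument used for the lower triangle.
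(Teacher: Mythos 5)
Your proposal is correct and follows essentially the same route as the paper: the paper's proof of Lemma~\ref{lem:tree} simply invokes the component decomposition from Lemma~\ref{lem:forest} and the (there-stated, "well-known by induction") fact that a tree's incidence matrix can be permuted into upper-triangular form with the loop column first. Your BFS/DFS rooting at the loop vertex is exactly the standard way to make that induction explicit, and your verification of the diagonal, sub-diagonal, and column-support claims is accurate.
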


The structure presented in Lemma~\ref{lem:forest} is called \emph{forest with single loops}. An illustration is given in Figure~\ref{fig:bipartite-evencycle}.

\subsubsection{Structure of $D$: Almost-feasibility and Separator}\label{obsec}

Recall that all ordinal bases visited by the algorithm are almost-feasible, and $|D\cap B|=n-1$ for some feasible basis $B$ such that $(B,D)$ is a Scarf pair (see Section~\ref{sec:scarf:almost-feasible-etc} and Section~\ref{sec:scarf:degeneracy}). Also recall that for an almost-feasible basis $D$, we let be $E_D$ the set of edges corresponding to columns in $D$ (including loops). Note that, if $D$ is an almost-feasible ordinal basis, then there is a submatrix of $C_D$ of size $n\times (n-1)$ that has the structure from Lemma~\ref{lem:tree}, minus the first column of $B_1$ (which corresponds to the loop incident to node $m_1$, e.g., column 1). 

This section is devoted to the proof of the following proposition, containing properties of almost-feasible ordinal bases.

\begin{proposition}\label{prop:separator}
Let $D$ be an almost-feasible ordinal basis and $u$ the utility vector associated to it. If $u_1\in \mathcal{L}$ and $D$ contains at least one element in $\{2,\dots,k\}$, then there exists an index $i$, $2\le i\le k$, such that in the graph $G_D=(V,E_D)$: \par
\begin{enumerate}
\item  $m_i$ is incident to both a loop and one or more valid edges. \item $m_2,\dots,m_{i-1}$ are incident to valid edges only; $m_{i+1},\dots,m_{k}$ are incident to loops only. 
\item $m_1$ is not incident to any edge.
\end{enumerate}
Following the discussion in Section~\ref{sec:hl-poly}, we call $m_i$ the \emph{separator} of $D$.

\end{proposition}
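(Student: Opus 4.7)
The plan is to establish the three items in order, using the bijection between rows and columns of an ordinal basis (Proposition~\ref{prop:app1.1}) together with the structure of $C$ from Section~\ref{sec:matrixdesign}.

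For point (3), I would observe that $u_1\in\mathcal{L}$ together with $1\notin D$ (from almost-feasibility) forces $m_1$ to be isolated in $G_D$: if any valid edge incident to $m_1$ were in $D$ then $u_1$ would lie in $\mathcal{M}$, and if the loop $e_1$ were in $D$ then $u_1$ would equal $0$---neither is consistent with $u_1\in\mathcal{L}$. For the covering property underlying points (1) and (2), I would use almost-feasibility to write $D=B'\cup\{j_t\}\setminus\{1\}$ and invoke Lemma~\ref{lem:forest}: $G_{B'}$ is a forest with single loops, so every node is covered in $G_{B'}$. Removing $\{1\}$ in passing from $B'$ to $D$ only deletes the loop $e_1$, whose sole endpoint is $m_1$; adding $j_t$ only creates further incidences. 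Consequently every $v\neq m_1$ remains covered in $G_D$, so $u_v\in\{0\}\cup\mathcal{M}$.

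Next, I would identify the candidate separator: define $i^*$ as the largest index in $\{2,\dots,k\}$ such that $m_{i^*}$ is incident to some valid edge in $D$. This $i^*$ exists because $|D|=2k$ exceeds the $2k-1$ available loops once $e_1$ is excluded, and the resulting valid edge cannot involve $m_1$. Let $e_1^\dagger$ denote the valid edge in $D$ of largest column index. By the decreasing left-to-right assignment of $\mathcal{L}$-values in row $1$, $m_1$ dislikes $e_1^\dagger$; by the analogous ordering within $m_{i^*}$'s block of valid-edge columns, the same edge $e_1^\dagger$ would also be $m_{i^*}$'s worst valid incident edge in $D$ if the loop $e_{i^*}$ were absent from $D$. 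Uniqueness (Proposition~\ref{prop:app1.1}) therefore forces $i^*\in D$, giving point (1). Point (2) for $i'>i^*$ follows at once: by maximality of $i^*$, $m_{i'}$ has no incident valid edge in $D$, so coverage forces $i'\in D$.

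The main obstacle is ruling out $i'\in D$ for $2\le i'<i^*$. The plan is a domination analysis on every valid edge $(m_{i'},w)\notin D$ under the hypothesis $i'\in D$ (so $u_{m_{i'}}=0$): row $m_{i'}$ cannot dominate since $u_{m_{i'}}=0<c_{m_{i'},(m_{i'},w)}$; row $m_1$ cannot dominate because $(m_{i'},w)$ has smaller column index than $e_1^\dagger$ and hence strictly larger $\mathcal{L}$-value than $u_1$; and any other row $v$ has $u_v\in\{0\}\cup\mathcal{M}$, which is strictly smaller than $c_{v,(m_{i'},w)}\in\mathcal{L}$. So $w$ is the only candidate dominator, forcing $u_w\ge c_{w,(m_{i'},w)}>0$ for every such $w$, which in particular pushes the loop of $w$ out of $D$. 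Coupling this with Proposition~\ref{prop:app1.1}---any valid edge of $m_{i'}$ or $m_{i^*}$ in $D$ must be disliked by its female endpoint, because its male endpoint is committed to its own loop and $m_1$ is committed to $e_1^\dagger$---I would derive a contradiction by showing that the women's disliked assignments cannot simultaneously account for both $m_{i'}$'s and $m_{i^*}$'s edges in $D$ while respecting the forced ordering on the $u_w$'s. Setting $i=i^*$ then satisfies all three items, and it is this final step, in particular the sub-case where $m_{i'}$ also has valid edges of its own in $D$, that I expect to require the most delicate case analysis.
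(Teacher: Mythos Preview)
Your approach is correct and runs closely parallel to the paper's, though with a different anchor: you define the separator as $i^*=\max\{i: m_i\text{ has a valid edge in }D\}$ and argue downward, whereas the paper proves a monotonicity lemma (Lemma~\ref{lem:man-order}(i): if $i\notin D$ then $\ell\notin D$ for all $\ell\le i$) and identifies the separator as the smallest $i\ge 2$ with $i\in D$. The two indices coincide, and the underlying mechanics---the row/column bijection of Proposition~\ref{prop:app1.1}, the decreasing column ordering in row~$1$, and coverage from almost-feasibility---are the same in both arguments.

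The last step is much simpler than you anticipate, and no ``ordering on the $u_w$'s'' is needed. Your own analysis already shows that, under the hypothesis $i'\in D$ for some $2\le i'<i^*$, \emph{every} woman satisfies $u_w>0$: for $w$ with $(m_{i'},w)\notin D$ this is your domination argument; for $w$ with $(m_{i'},w)\in D$ the edge must be $w$-disliked (since $m_{i'}$ dislikes its loop and $m_1$ dislikes $e_1^\dagger$, which is incident to $m_{i^*}\neq m_{i'}$), so $u_w=c_{w,(m_{i'},w)}\in\mathcal{M}$. Hence no woman's loop lies in $D$. But this immediately contradicts Lemma~\ref{unmatchedwoman}: since $m_1$ has no incident edge in $G_D$, it has none in $G_B$ other than $e_1$, so $m_1$ is unmatched in $\mu_B$; by $|M|=|W|$ some woman $\bar w$ is also unmatched, forcing $(\bar w,\bar w)\in E_B\setminus\{e_1\}\subset E_D$. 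The paper uses this same unmatched woman, but introduces her at the outset and analyzes only the single edge $(m_\ell,\bar w)$ rather than the full family $\{(m_{i'},w):w\in W\}$; your route is arguably tidier but amounts to the same idea.
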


An illustration of Proposition~\ref{prop:separator} is given in Figure~\ref{fig:bipartite-evencycle}. We remark that Proposition~\ref{prop:separator} and the related definition of separator apply only as long as $u_1 \in {\cal L}$. As we will see in Section~\ref{sec:convergence}, we will define $m_1$ to be the separator when $u_1 \in {\cal M}$.

\smallskip

It is useful to remark that we will frequently use the relation $B=R\cup\{1\}$, $D=R\cup\{j_t\}$. We start with some properties of the utility vector of an almost-feasible-basis.

\begin{lemma}\label{lem:uvectorlemma}
The utility vector of an almost-feasible ordinal basis $D$ satisfies 
$$u_i\in \mathcal{S}\cup \mathcal{M}, \forall i\neq 1.$$
\end{lemma}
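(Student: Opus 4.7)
The plan is to exploit almost-feasibility to write $D$ as $R\cup\{j_t\}$ for some feasible basis $B=R\cup\{1\}$, and then read off from the forest-with-single-loops structure of $B$ that $D$ touches every node $v_i$ with $i\neq 1$. Concretely, I would first recall from Definition~\ref{almostfeasibledef} that since $D$ is almost-feasible, there is a feasible basis $B$ with $1\in B$ such that $R:=B\cap D$ has cardinality $n-1$, i.e., $B=R\cup\{1\}$. Since $B$ is an $(A,b)$ basis, the matrix $A_B$ is invertible, so every row of $A_B$ has at least one nonzero entry; equivalently, by Lemma~\ref{lem:forest}, in $G_B$ every vertex lies in some connected component and therefore is incident to at least one edge of $E_B$ (either a loop or a valid edge).

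Next I would use the fact that column $1$ corresponds to the loop $e_1$ at $v_1=m_1$, so column~$1$ of $A$ has a single nonzero entry, located in row~$1$. Consequently, deleting column~$1$ from $B$ (to form $R$) does not remove any edge incident to any $v_i$ with $i\neq 1$. Thus for every $i\neq 1$, there exists $j\in R\subseteq D$ such that $e_j$ is incident to $v_i$. By the construction of $C$ in Section~\ref{sec:matrixdesign}, such an incident edge contributes either $c_{ii}=0\in\mathcal{S}$ (if it is the loop at $v_i$) or $c_{ij}\in\mathcal{M}$ (if it is a valid edge incident to $v_i$).

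Finally I would close the argument by noting that the four buckets $\mathcal{S},\mathcal{M},\mathcal{L},\mathcal{XL}$ are pairwise disjoint and ordered with $\mathcal{S}\cup\mathcal{M}=\{0,1,\ldots,k\}$ strictly below $\mathcal{L}\cup\mathcal{XL}$. Since we have exhibited some $j\in D$ with $c_{ij}\le k$, we get
\[
u_i \;=\; \min_{j'\in D} c_{ij'} \;\le\; c_{ij} \;\le\; k,
\]
which, together with $u_i\ge 0$, forces $u_i\in\mathcal{S}\cup\mathcal{M}$. There is no genuine obstacle here beyond keeping the bookkeeping between column indices, edges, and the four value buckets straight; the content of the lemma is really just the observation that the only row which can be ``orphaned'' by swapping column $1$ out of a feasible basis is row $1$ itself.
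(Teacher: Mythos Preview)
Your proof is correct and follows essentially the same approach as the paper's: both arguments observe that $R=B\setminus\{1\}\subseteq D$ and that every row $i\neq 1$ of $A_B$ is supported on some column of $R$ (you via invertibility/Lemma~\ref{lem:forest}, the paper via the triangular form of Lemma~\ref{lem:tree}), so $D$ contains an edge incident to each $v_i$ with $i\neq1$, forcing $u_i\le k$. The only cosmetic difference is which structural lemma is cited for the ``every row is hit'' step.
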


\begin{proof}

Since $D$ has $n-1$ columns in common with a feasible basis $B$, and does not contain the first column, we have $B\setminus\{1\}\subset D$. By Lemma \ref{lem:tree}, any submatrix $A_B$ associated to a feasible basis $B$ has a special diagonal form. In particular, for each row $i\neq 1$, an edge incident to $v_i$ (possibly, a loop) appears among the columns of $R$. 
Hence, for every row $i$, except possibly the first, there is a column $j \in R$ such that $c_{i,j}\in \mathcal{S} \cup \mathcal{M}$. Thus the minimum element of those rows in $D$ must belong to $\mathcal{S}\cup \mathcal{M}$, which completes the proof.

\end{proof}

By Definition~\ref{def:disliked}, we can find a one-to-one correspondence of rows and columns in $D$. Recall that, intuitively, for each node $v_i\neq v_1$, the minimizer of row $i$ in $D$ corresponds to the worst choice between all edges incident to $v_i$ among the columns in $D$.

Clearly, if a loop $e_i$ satisfies $i\in D$, then $e_i$ is $v_i$-disliked since $c_{i,i}=0$ is the minimizer. Now consider a valid edge $e_j=(m,w)$. Column $c_j$ contains numbers in $\mathcal{L}$, except in rows $m$ and $w$. If $e_j\in E_D$, then as a corollary of Lemma~\ref{lem:uvectorlemma}, only $m$, $w$ or $m_1$ can dislike $e_j$:
\begin{corollary}\label{obs:almost-feasible-v-i}
Let $D$ be an almost-feasible basis. For any valid edge $e_j=(m,w)$, if $e_j\in E_D$, then $e_j$ can only be $m$-disliked, $w$-disliked, or $m_1$-disliked, and it is exactly one of the three.
\end{corollary}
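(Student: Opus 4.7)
The plan is to combine two ingredients that are already available: (i) the precise structure of column $j$ of the matrix $C$ corresponding to a valid edge $e_j=(m,w)$, which depends only on the construction in Section~\ref{sec:matrixdesign}, and (ii) Lemma~\ref{lem:uvectorlemma}, which pins down the range of $u_i$ for every $i\neq 1$. The only real work is a case analysis over the rows $i\in[n]$.

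First I would recall that, by the construction of $C$ in Section~\ref{sec:matrixdesign}, the column $c_j$ of the valid edge $e_j=(m,w)$ has $c_{mj},c_{wj}\in\mathcal{M}$, while for every other row $i\in[n]\setminus\{m,w\}$ we have $c_{ij}\in\mathcal{L}$. Since $\mathcal{M}=\{1,\dots,k\}$ and $\mathcal{L}=\{k+1,\dots,k^2\}$ are disjoint and $\max\mathcal{M}<\min\mathcal{L}$, this gives a clean separation between the two groups of entries of $c_j$.

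Next I would apply Lemma~\ref{lem:uvectorlemma}, which states $u_i\in\mathcal{S}\cup\mathcal{M}$ for every $i\neq 1$. Fix any $i\in[n]\setminus\{1,m,w\}$. Then $c_{ij}\in\mathcal{L}$, so $c_{ij}\geq k+1>k\geq u_i$, and hence $u_i<c_{ij}$. By Definition~\ref{def:disliked}, this means $e_j$ cannot be $v_i$-disliked. Consequently the only rows $i$ for which $u_i=c_{ij}$ can possibly hold are $i\in\{m,w,1\}$, i.e., the only candidate ``dislikers'' of $e_j$ are $m$, $w$, and $m_1$ (noting that if $m=m_1$ or $w=m_1$ these candidates merge).

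Finally, to obtain the ``exactly one of the three'' clause, I would invoke Proposition~\ref{prop:app1.1}, which guarantees that the minimizing row for any column of an ordinal basis is unique. Since $e_j\in E_D$, exactly one row $i\in[n]$ achieves $u_i=c_{ij}$, and by the previous step this row must lie in $\{m,w,1\}$, yielding the conclusion. I do not anticipate any real obstacle here: the construction of $C$ forces all ``far'' entries of the column to lie strictly above the permitted range of $u_i$ for $i\neq 1$, so the corollary is essentially a direct consequence of Lemma~\ref{lem:uvectorlemma} and Proposition~\ref{prop:app1.1}.
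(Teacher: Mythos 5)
Your proof is correct and follows essentially the same route as the paper: the surrounding text derives the corollary exactly as you do, by noting that column $c_j$ has entries in $\mathcal{L}$ outside rows $m$ and $w$, invoking Lemma~\ref{lem:uvectorlemma} to rule out every row $i\notin\{1,m,w\}$ as a minimizer, and using Proposition~\ref{prop:app1.1} for the ``exactly one'' clause. No gaps.
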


\begin{lemma}\label{unmatchedwoman}
Let $D$ be almost-feasible basis and $B$ a feasible basis associated to it. If $u_1\in \cal{L}$, then there exists $\bar w \in W$ such that $(\bar{w},\bar{w}) \in E_B \cap E_D$ and $\bar w$ is not properly matched in $\mu_B$.
\end{lemma}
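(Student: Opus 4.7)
The plan is to show that the hypothesis $u_1 \in \mathcal{L}$ forces $m_1$ to be isolated in $G_D$ and hence to sit alone with its loop in its connected component of $G_B$; an integral bipartite parity argument then supplies the required $\bar{w}$.

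First, I would read off $u_1 \in \mathcal{L}$ from the construction of $C$ in Section~\ref{sec:matrixdesign}. The entries of row~$1$ break into four buckets: the loop $(m_1,m_1)$ contributes $0 \in \mathcal{S}$, valid edges incident to $m_1$ contribute values in $\mathcal{M}$, loops on other nodes contribute values in $\mathcal{XL}$, and only valid edges not incident to $m_1$ contribute values in $\mathcal{L}$. Thus $u_1 = \min_{j \in D} c_{1j} \in \mathcal{L}$ pins down the structure of $D$ near $m_1$: no column of $D$ is incident to $m_1$, so $m_1$ is isolated in $G_D$.

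Next, using almost-feasibility ($B = R \cup \{1\}$ with $R = B \cap D \subseteq D$), I would transfer this isolation from $G_D$ to $G_B$: the only column of $B$ incident to $m_1$ is the loop $e_1$. The forest-with-single-loops description (Lemma~\ref{lem:forest}) then forces the connected component of $m_1$ in $G_B$ to be $\{m_1\}$ together with $e_1$, and the equation at $m_1$ puts mass $1$ on this loop in the basic feasible solution associated to $B$. In particular, $m_1$ is not properly matched by $\mu_B$.

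To finish, I would invoke a bipartite parity argument. Total unimodularity of $A = (I \mid A')$ guarantees that the basic feasible solution is integral and $\mu_B$ is an honest matching, so counting valid edges from either side equates the number of unmatched men with the number of unmatched women. Since $m_1$ is unmatched, there is an unmatched $\bar{w} \in W$; the loop $(\bar{w},\bar{w})$ lies in $\mu_B \subseteq E_B$, and being distinct from $e_1$, it lies in $R \subseteq E_D$, yielding $(\bar{w},\bar{w}) \in E_B \cap E_D$ as required. The only point needing care is the first step, where the precise bucket assignment of row~$1$ under the chosen $C$ is essential; everything after that is short counting and standard polyhedral integrality.
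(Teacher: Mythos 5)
Your proposal is correct and follows essentially the same route as the paper's proof: $u_1\in\mathcal{L}$ forces every column of $D$ (hence every column of $B\setminus\{1\}$) to be non-incident to $m_1$, so $m_1$ is not properly matched in $\mu_B$; the count $|M|=|W|$ then yields an unmatched woman $\bar w$, whose loop carries $x$-value $1$ and lies in $B\setminus\{1\}=R\subseteq D$. Your version merely spells out the bucket analysis of row $1$ and the forest/integrality details that the paper leaves implicit.
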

\begin{proof}

Let $B$ be the associated feasible basis such that
$D=B\cup\{j_t\}\setminus\{1\}$ as in Definition \ref{almostfeasibledef}. Then $R\subset D$. $B$ corresponds to a feasible matching $\mu_B$. Since $u_1\in \mathcal{L}$, no edge is incident to $m_1$ in $E_D$, thus no valid edge is incident to $m_1$ in $E_B$, which implies $m_1$ is not properly matched in $\mu_B$. 
Since $|M|=|W|$, there exists some woman $\bar{w}$ who is also not properly matched. Thus $x_{(\bar{w},\bar{w})}=1$, i.e.,  the loop $(\bar{w},\bar{w})\in E_B\cap E_D$.\end{proof}

Next lemma shows a fundamental property en route to the proof of Proposition~\ref{prop:separator}. 

\begin{lemma}\label{lem:man-order}
Consider any almost-feasible ordinal basis $D$, and suppose $u_1 \in \mathcal{L}$. Then: 
\item[(i)] If $i\notin D$ ($2\le i\le k$) and $\ell\leq i$, then $\ell\notin D$.
\item[(ii)] If there exists $2\le i\le k$ such that $i-1\notin D$ and $i\in D$, then the rightmost column in $D$ corresponds to a valid edge $(m_i,w)$ with some $w\in W$.
\end{lemma}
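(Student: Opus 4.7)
The plan is to prove both (i) and (ii) in a unified way by exhibiting a single pivotal index $j^\ast$. Let $j^\ast$ be the largest index in $[2,k]$ such that some valid edge incident to $m_{j^\ast}$ belongs to $D$; this is well-defined because $u_1\in\mathcal{L}$ forces at least one valid edge (necessarily not incident to $m_1$) into $D$. Since row $1$ of $C$ assigns $\mathcal{L}$-values in strictly decreasing order across the valid-edge columns, and no edge incident to $m_1$ lies in $D$, the minimizer defining $u_1$ must be the rightmost valid edge of $D$. This is necessarily the $m_{j^\ast}$-worst valid edge in $D$, which I will denote $(m_{j^\ast},w^\ast)$; by Proposition~\ref{prop:app1.1}, $m_1$ is its unique disliker.

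From here I would first show that $e_j\in D$ for every $j\in[j^\ast,k]$. For $j>j^\ast$, no valid edge from $m_j$ lies in $D$, so the only row-$j$ entry over $D$ with value in $\mathcal{S}\cup\mathcal{M}$ could be $c_{j,j}=0$; combined with Lemma~\ref{lem:uvectorlemma} this forces $e_j\in D$. For $j=j^\ast$, if $e_{j^\ast}\notin D$ then $u_{j^\ast}$ would be attained at the $m_{j^\ast}$-worst valid edge in $D$, which is exactly $(m_{j^\ast},w^\ast)$; this would make $m_{j^\ast}$ a second disliker of that column, contradicting the uniqueness in Proposition~\ref{prop:app1.1}.

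The main obstacle is to show $e_{j'}\notin D$ for every $2\le j'<j^\ast$. Assuming for contradiction $e_{j'}\in D$, I would invoke Lemma~\ref{unmatchedwoman} to pick a woman $\bar w$ with loop in $D$, so that $u_{\bar w}=u_{j'}=0$, and then focus on the valid edge $(m_{j'},\bar w)$. If $(m_{j'},\bar w)\in D$, Corollary~\ref{obs:almost-feasible-v-i} restricts its unique disliker to $\{m_{j'},\bar w,m_1\}$; however $m_{j'}$ and $\bar w$ are ruled out because their $u$-values are $0$ while $c_{m_{j'},(m_{j'},\bar w)},c_{\bar w,(m_{j'},\bar w)}\in\mathcal{M}$, and $m_1$ is ruled out because its unique disliked column is $(m_{j^\ast},w^\ast)\neq(m_{j'},\bar w)$. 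If $(m_{j'},\bar w)\notin D$, every candidate dominator row fails: $m_{j'}$ and $\bar w$ for the same reason as above; each other row $r\neq 1$ because $u_r\in\mathcal{S}\cup\mathcal{M}$ while $c_{r,(m_{j'},\bar w)}\in\mathcal{L}$; and $m_1$ because the column index of $(m_{j'},\bar w)$ is at most $k(j'+1)+k$, strictly less than the minimum column index $k(j^\ast+1)+1$ of any valid edge of $m_{j^\ast}$, so the left-to-right decrease of $\mathcal{L}$ in row $1$ forces $c_{1,(m_{j'},\bar w)}>u_1$. Either outcome contradicts $D$ being an ordinal basis.

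Assembling these three steps yields $\{j\in[2,k]:e_j\in D\}=\{j^\ast,j^\ast+1,\ldots,k\}$, which is (i), and forces the separator to equal $j^\ast$, so that the rightmost column of $D$ is $(m_{j^\ast},w^\ast)=(m_i,w^\ast)$, which is (ii). I expect the undominated-column case analysis in the third paragraph to be the most delicate step: it rests crucially on the carefully engineered left-to-right decreasing assignment of $\mathcal{L}$-values in row $1$ and on the column-indexing of valid edges by source man from Section~\ref{sec:matrixdesign}, combined with the loop of the unmatched woman supplied by Lemma~\ref{unmatchedwoman}, which together produce a column $(m_{j'},\bar w)$ whose two natural dislikers both have utility $0$.
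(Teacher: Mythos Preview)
Your proof is correct, and it is a cleaner reorganization of the paper's argument rather than a fundamentally different method. The paper proves (i) first by a direct contradiction: assuming $i\notin D$ and some $\ell<i$ with $\ell\in D$, it uses the edge $(m_\ell,\bar w)$ (with $\bar w$ from Lemma~\ref{unmatchedwoman}) and almost-feasibility to force an edge incident to $m_i$ into $D$, from which the row-$1$ comparison yields the contradiction; it then proves (ii) separately by applying (i) and eliminating the cases $\ell<i$ and $\ell>i$ for the rightmost column. Your approach instead names the separator $j^\ast$ explicitly at the outset, establishes $\{j\in[2,k]:e_j\in D\}=\{j^\ast,\dots,k\}$ in one pass, and reads both (i) and (ii) off this characterization. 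The ingredients are identical---Lemma~\ref{lem:uvectorlemma}, Lemma~\ref{unmatchedwoman}, Corollary~\ref{obs:almost-feasible-v-i}, Proposition~\ref{prop:app1.1}, and the left-to-right monotonicity of $\mathcal{L}$ in row~$1$---but your packaging avoids the paper's somewhat indirect use of the edge $(m_i,\mu_B(m_i))$ in the ``not in $D$'' branch and also yields, as a byproduct, that the hypothesis ``$D$ contains at least one element in $\{2,\dots,k\}$'' used later in Proposition~\ref{prop:separator} is automatic once $u_1\in\mathcal{L}$.
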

\begin{proof}

(i) Since $D$ is almost-feasible, we always have $1\notin D$. The case $i=2$ is trivial. Consider $3\le i\le k$ and $i\notin D$. Assume by contradiction that there exists $2\le \ell<i$ such that $\ell\in D$.

Let $\bar{w}$ be the woman not properly matched whose existence is guaranteed by Lemma \ref{unmatchedwoman}. Consider the edge $(m_\ell,\bar{w})$. We show $(m_\ell,\bar{w})\notin E_D$. Assume by contradiction $(m_\ell,\bar{w})\in E_D$. Then $(m_\ell,\bar{w})$ is neither $m_\ell$-disliked nor $\bar{w}$-disliked, since $(m_\ell,m_\ell),(\bar{w},\bar{w})\in E_D$. By Corollary \ref{obs:almost-feasible-v-i}, $(m_\ell,\bar{w})$ is $m_1$-disliked. $(m_\ell,\bar{w})$ is the rightmost column in $D$, since by definition of $C$, any column $j$ to the right of $(m_\ell,\bar{w})$ satisfies $c_{1 j}<c_{1 (m_\ell,\bar{w})}$, contradicting the fact that $u_1=c_{1 (m_\ell,\bar{w})}$.  Using the same argument, since any valid edge incident to $m_i$ corresponds to a column on the right of $(m_\ell,\bar{w})$ (recall $i>\ell$), no valid edge in $E_D$ is incident to $m_i$. Moreover, we know $(m_i,m_i)\notin E_D$ by hypothesis. Hence, no edge is incident to $m_i$ in $B$, either, a contradiction.

Hence, $(m_\ell,\bar{w})\notin E_D$. Since $D$ is an ordinal basis, 
there exists $a$ such that $u_{a}>c_{a,(m_\ell,\bar{w})}$. Notice that column $(m_\ell,\bar w)$ has entries in $\mathcal{L}$, except for rows $m_\ell,\bar w$, that have entries in $\mathcal{M}$. Hence, 
by Lemma \ref{lem:uvectorlemma}, we have $a \in \{m_1,m_\ell,\bar w\}$. Since $(m_\ell,m_\ell), (\bar w,\bar w) \in E_D$, we have $u_{(m_\ell,m_\ell)}=u_{(\bar w,\bar w)}=0$. Hence, $a=m_1$. Recall from above that $u_1$ is realized at the rightmost column of $D$. Since, by hypothesis $i \notin D$ and $\ell < i$ and by construction all edges incident to node $m_i$ follow all columns from $D$, 
we have $u_1\le c_{(m_i,\mu_B(m_i))}< c_{(m_\ell,\bar w)}$, obtaining the required contradiction.

\medskip
\noindent (ii) By (i), $1,\dots,i-1\notin D$ and $i,i+1,\dots,k\in D$.
Now consider the $m_1$-disliked edge $e=(m_\ell,w)$ in $D$. Since $u_1 \in \mathcal{L}$, $e$ is not incident to $m_1$ and it is therefore the rightmost column in $D$.

If $\ell\le i-1$, then $e$ is also $m_\ell$-disliked because when $\ell\notin D$, the unique $m_\ell$-disliked edge occurs at the rightmost entry from the set of columns incident to $m_\ell$, 
which is exactly $e$. Since $m_\ell\neq m_1$, we contradict the bijection from Definition~\ref{def:disliked}.

If $\ell\ge i+1$, then consider the edge $(m_i,\bar{w})$. If $(m_i,\bar{w})\in E_D$ then neither $m_i$ (since $(m_i,m_i) \in E_D$), nor $\bar w$ (since $(\bar{w},\bar{w}) \in E_D$), nor $m_1$ (since $e \in E_D$) dislike $(m_i,\bar{w})$. This contradicts Corollary~\ref{obs:almost-feasible-v-i}. If $(m_\ell,\bar{w})\notin E_D$, then the corresponding column $c_{(m_\ell,\bar{w})}$ is strictly greater than $u$. Both are contradictions.\par
Therefore $\ell=i$ and $e=(m_i,w)$ is the rightmost column for some $w\in W$. \end{proof}

We now prove Proposition~\ref{prop:separator}. Consider the graph $G_D=(V,E_D)$. Let $2 \leq i \leq k$ such that $i-1\notin D$ and $i\in D$. Since $u_1 \in \mathcal{L}$, $m_1$ is not incident to any edge, proving 3. $i \in D$ implies that $m_i$ is incident to a loop, while Lemma~\ref{lem:man-order}, part (ii) implies that $m_i$ is incident in $G_D$ to one valid edge $(m_i,w)$. This proves 1. By Lemma~\ref{lem:man-order}, part (i), $m_2,\dots,m_{i-1}$, are not incident to loops, but each of them must be incident to at least one valid edge by the definition of almost-feasibility and Lemma~\ref{lem:tree}. On the other hand, $m_{i+1},\dots,m_k$ are only incident to loops, by Lemma \ref{lem:man-order}(i). This shows 2 and concludes the proof of Proposition~\ref{prop:separator}.

\subsection{Pivoting}\label{sec:pivoting}

In the next two sections, we will discuss how Scarf's algorithm performs a generic iteration, as long as $u_1 \in {\cal L}$. As we will see later, the behavior of the algorithm is also similar when $u_1 \in {\cal M}$, but the arguments are slightly different.

Suppose we have a Scarf pair $(B,D)$, with $x$ being the basic feasible solution associated to $B$ and $u$ the utility vector associated to $D$. We let
\begin{equation}\label{eq:BDR}
B=\{1,j_1,\dots,j_{t-1},j_{t+1},\dots,j_n\}, \, D=\{j_1,\dots,j_t,\dots,j_n\}, \, 
R=B \cap D = B\setminus \{1\}.
\end{equation}
Following Proposition~\ref{prop:separator}, we let $m_i$ be the current separator. Recall that an iteration starts with a cardinal pivot such that $j_t$ enters $B$ and some $j_\ell\neq j_t$ leaves $B$, leading to a new feasible basis $B'=\{1,j_1,\dots,j_{\ell-1},j_{\ell+1},\dots,j_n\}$ associated to the basic feasible solution $x'$.
If $j_\ell\neq 1$, this iteration continues with an ordinal pivot such that $j_\ell$ leaves $D$ and some $j^*\neq j_\ell$ enters $D$. We have therefore a new ordinal basis $D'=\{j^*,j_1,\dots,j_{\ell-1},j_{\ell+1},\dots,j_n\}$, to which a new utility vector $u'$ is associated. 

If $j^*\neq 1$, one iteration ends and we continue to the next. 

\subsubsection{Cardinal Pivots}\label{sec:cardinal-pivot}
In this section, we discuss how Scarf's algorithm performs a cardinal pivot.
Since $j_t$ is given, the basic feasible solution obtained after a cardinal pivot is performed is uniquely determined (see, e.g.,~\cite[Section 3.2]{bertsimas1997introduction}), and we denote it by $x'$. On the other hand, because of degeneracy, there may be multiple indices $j_\ell$ that can leave the basis, hence multiple basis corresponding to $x'$. So our task is to find an appropriate $j_\ell$. In particular, we show the following.


\begin{lemma}\label{lem:cardpivot}
Consider a cardinal pivot of Scarf's algorithm, where $u_1\in \mathcal{L}$ and the separator is $m_i$. If $j_t$ is a man-disliked (w.r.t.~$D$) valid edge, then we can always let the leaving column $j_\ell$ be either the loop $e_i$ or some woman-disliked edge (w.r.t.~$D$).
\end{lemma}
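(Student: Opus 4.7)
My plan is to analyze the cardinal pivot by combining the forest-with-single-loops structure of $G_B$ (Lemma~\ref{lem:forest}) with the disliked bijection induced by $D$ (Definition~\ref{def:disliked}, Corollary~\ref{obs:almost-feasible-v-i}). The starting identity is $G_B\cup\{e_{j_t}\}=G_D\cup\{(m_1,m_1)\}$, which follows from $B=R\cup\{1\}$ and $D=R\cup\{j_t\}$. Under $u_1\in\mathcal{L}$, the vertex $m_1$ is isolated in $G_D$ (no incident edge of $E_D$ would have small enough entry in row $m_1$), and so forms a singleton component of $G_B$ whose unique loop is $e_1$. Since each component of $G_B$ carries a unique loop and $(m_i,m_i)\in E_B$, the separator's component $\Lambda\subseteq G_B$ has $e_i$ as its loop.

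Writing $j_t=(m_a,w_b)$, the addition of $j_t$ to $G_B$ either creates an even cycle $C$ of length $2r\geq 4$ in a component of $G_B$ (Case A: $m_a,w_b$ in the same component), or merges two components of $G_B$ into a component $\Gamma$ carrying two loops (Case B). For Case A I would run a counting argument on the disliked labels of the cycle: every edge of $C$ lies in $E_D$ (since $R\subset E_D$ and $e_1$ cannot lie on a cycle), and by Corollary~\ref{obs:almost-feasible-v-i} each such edge is disliked by its $M$-endpoint, its $W$-endpoint, or $m_1$. Bijectivity caps the number of cycle edges disliked by cycle $M$-vertices at $r$ and by $m_1$ at $1$; if no cycle edge were woman-disliked the $2r$ edges would need at most $r+1$ disliker-slots, which is a contradiction once $r\geq 2$. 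For Case B I would argue that one of the two components merged into $\Gamma$ is $\Lambda$ itself, so $e_i$ is one of the two loops of $\Gamma$. This structural claim is rooted in how the preceding ordinal pivot produced $j_t$ and can be unwound using the separator's properties (Proposition~\ref{prop:separator}).

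It then remains to verify that the candidate column identified in each case is admissible as a leaving column under the cardinal pivot's min-ratio test. The pivoting direction $d$ is supported on $C$ in Case A (alternating $\pm 1$ around the cycle) and on the unique $m_a$--$w_b$ path in $\Gamma$ together with its two loops in Case B. Since basic feasible solutions of the bipartite matching polytope are $\{0,1\}$-valued, $\theta\in\{0,1\}$ and the min-ratio test typically produces many ties; in Case B both loops tie, so $j_\ell=e_i$ is admissible. The main obstacle I foresee is the admissibility check in Case A: the counting argument only furnishes a woman-disliked edge somewhere on $C$, which a priori could lie on the ``increasing'' side of the alternation ($d=+1$) and hence be inadmissible. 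Closing this gap will require a refined argument that traces disliked labels around $C$ starting from the anchor fact that $j_t$ itself is man-disliked, uses the single-use constraint of the $m_1$-dislike, and ultimately shows that a woman-disliked edge must appear among the ``decreasing'' edges of $C$ where the ratio test is actually applied.
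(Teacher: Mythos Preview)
Your framework (forest-with-single-loops, the identity $G_B\cup\{e_{j_t}\}=G_D\cup\{e_1\}$, the two cases, and the disliked bijection) matches the paper's. But both of your admissibility arguments have genuine gaps, and they stem from the same missing structural fact.

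\textbf{Case B (path).} Your claim ``both loops tie, so $j_\ell=e_i$ is admissible'' is not correct. It is true that both loops carry the same sign in the pivot direction (because $p$ is even and $j_t$ sits at an odd position along $P$), but they need not have the same $x$-value: one can have $x_{(m_i,m_i)}=1$ while the other loop has $x$-value $0$, so that $\theta=0$ and $e_i$ is \emph{not} an admissible leaving column. In that degenerate situation you must instead exhibit a woman-disliked edge on $P$ with $x$-value $0$ lying among the decreasing edges. Your plan gives no mechanism for this. Also, your justification that $e_i$ is one of the two loops of $\Gamma$ (``rooted in how the preceding ordinal pivot produced $j_t$'') is vague; the paper proves it by a clean pigeonhole on $P$: $P$ has $p+1$ edges (all in $E_D$) and $p$ vertices, so by the bijection the dislikers are exactly $V_P\cup\{m_1\}$, forcing the $m_1$-disliked edge $(m_i,w)$ to lie on $P$ and hence $m_i\in V_P$.

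\textbf{Case A (cycle).} Your counting shows some cycle edge is woman-disliked, but as you note, it could be on the increasing side. The refinement you gesture at is exactly what the paper does and is the key missing step in both cases: one shows that along $P$ (resp.\ $Q$) the disliked labels follow the precise pattern
\[
m_{i_1},\, m_1,\, w_{i_1},\, m_{i_2},\, w_{i_2},\,\dots
\]
(resp.\ $m_1,w_{i_1},m_{i_2},w_{i_2},\dots$). Since $j_t$ is man-disliked it sits at an even position, and the decreasing edges are precisely those at odd position from the $m_i$-loop end; by the pattern these are \emph{exactly} $e_i$ together with the woman-disliked edges. This simultaneously handles the non-degenerate case (pick any of them) and the degenerate case (one of them has $x$-value $0$), and it closes your acknowledged gap in Case A and the unacknowledged one in Case B with a single stroke.
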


\begin{proof}
    
We show that there is a basis $B'$ corresponding to $x'$ of the form $B \cup \{j_t\} \setminus \{j_\ell\}$, where $j_\ell$ is either the loop $(m_i,m_i)$ or a woman-disliked edge. Note that it may be that $x=x'$ and / or that there are multiple basis corresponding to $x'$.

By Lemma \ref{lem:forest}, $G_B$ is a forest with single loops. When a valid edge $e_{j_t}$ is added to our graph, one of the following happens:

\begin{enumerate}

\item[(I)]$e_{j_t}$ joins two different trees $T_1,T_2$ of $(V,E_B^v)$, as to form a larger tree $T$ with two loops.

\item[(II)] $e_{j_t}$ connects two nodes of a same tree $T_1$ of $(V,E_B^v)$.

\end{enumerate}

Suppose (I) happens and consider the path $P$ connecting the two loops of $T$.  Since $u_1\in \mathcal{L}$, we know that there is no valid edge incident to $m_1$ in $E_D$, thus $P$ is not incident to $m_1$. Therefore, all edges of $P$ are contained in $E_D$.  
\begin{claim}\label{cl:P-starts-with-loop}
$P$ starts at $m_i$ with a loop and ends at a woman $\bar{w}$ with a loop. Moreover, suppose $P$ is incident to $p$ nodes, with
\begin{equation}\label{eq:alter-disliked-path}
    P=(m_{i_1},m_{i_1}),(m_{i_1},w_{i_1}),(w_{i_1},m_{i_2}),\dots,(m_{i_{\frac{p}{2}}},w_{i_{\frac{p}{2}}}),(w_{i_{\frac{p}{2}}},w_{i_{\frac{p}{2}}})
\end{equation}
such that $m_{i_1}=m_i$ and $w_{i_{\frac{p}{2}}}=\bar{w}$. Then the edges of $P$ are disliked by
$$m_{i_1},m_1,w_{i_1},m_{i_2},w_{i_2},\dots,m_{i_{\frac{p}{2}-1}},w_{i_{\frac{p}{2}-1}},m_{i_{\frac{p}{2}}},w_{i_{\frac{p}{2}}}$$
in this order. 
\end{claim}

\noindent \emph{\underline{Proof of Claim~\ref{cl:P-starts-with-loop}.}} Suppose $P$ is incident to $p$ nodes and has therefore $p-1$ valid edges, and $p+1$ edges in total (including loops). By Definition~\ref{def:disliked} and Corollary \ref{obs:almost-feasible-v-i}, each edge of $P$ is disliked by exactly one node from $V_P\cup\{m_1\}$. In particular, the $m_1$-disliked edge is on $P$, and by Proposition~\ref{prop:separator} it is exactly the rightmost column $(m_i,w) \in E_D$, where $m_i$ is the separator and $w$ is some woman. Hence, one of the loops on $P$ is $(m_i,m_i)$, and the first node of $P$ can without loss of generality be assumed to be $m_i$. On the other hand, again by Proposition~\ref{prop:separator}, the last node of $P$ is a woman $\bar{w}$.


We have therefore 
obtained (\ref{eq:alter-disliked-path}), where $m_{i_1}=m_i$, $w_{i_1}=w$ and $w_{i_{\frac{p}{2}}}=\bar{w}$.
It is clear that $(m_{i_1},m_{i_1})$ is $m_{i_1}$-disliked, $(m_{i_1},w_{i_1})$ is $m_1$-disliked, and $(w_{i_{\frac{p}{2}}},w_{i_{\frac{p}{2}}})$ is $w_{i_{\frac{p}{2}}}$-disliked. Then 
the edges of $P$ are disliked by
$$m_{i_1},m_1,w_{i_1},m_{i_2},w_{i_2},\dots,m_{i_{\frac{p}{2}-1}},w_{i_{\frac{p}{2}-1}},m_{i_{\frac{p}{2}}},w_{i_{\frac{p}{2}}}$$
in this order. \hfill $\small \blacksquare$

\smallskip

Recall that we are letting some man-disliked valid edge $e_{j_t}$ enter the basis, with $e_{j_t} \in P$. Since $j_t\notin B$, we have $x_{j_t}=0$. 

\smallskip

\emph{Case 1: $x_{j_t}'=1$.} We claim that there exists an $x$-augmenting path $P^A$ starting at $m_i$ with a loop, such that $e_{j_t}\in P^A\subset E_D.$
In fact, consider the matchings $\mu_x$ and $\mu_{x'}$. We have $e_{j_t}\notin \mu_x$ and $e_{j_t}\in \mu_{x'}$. Now define the edge set $E_{change}=\{e\in E_D|x_e\neq x_e'\}$. Then $e_{j_t}\in E_{change}$. Denote by $P^A$ the connected component in $(V,E_{change})$ that contains $e_{j_t}$. Notice that every edge in $P^A$ still differs in $x$ and $x'$, then for any $e\in P^A$, one of $x_e$, $x_e'$ takes value $1$ and the other takes value $0$. 

Consider any node $v$ that belongs to $P^A$. There are exactly two edges in $P^A$ (including loops) incident to $v$, which are precisely the edges $v$ is incident to in $\mu_x$ and $\mu_{x'}$ (notice that, by the definition of matching presented in Section~\ref{sec:notation}, every node is incident to exactly one edge in $\mu_x$ and $\mu_{x'}$). Since there is no cycle in $E_D$ and $P^A$ is connected, $P^A$ can only be a path. This path $P^A$ is $x$-alternating because any consecutive two edges with $x$-value $1,0$ will cause the infeasibility of $x$, $x'$, respectively. Furthermore, $P^A$ is $x$-augmenting since, by maximality, its starting and ending edges can only be loops. Hence $P^A$ is the desired $x$-augmenting path that contains $e_{j_t}$.
By the definition of $x$-augmenting path, $P^A$ consists of at least two loops as the endpoints. By the structure of $E_D$, there is only one path in $E_D$ that contains more than one loop, which is $P$. Therefore, $P^A=P$, which implies that $P$ starts at $m_i$ with a loop because of Claim~\ref{cl:P-starts-with-loop}.

We are left to show that $B\cup \{j_t\} \setminus \{i\}$ is a feasible basis whose associated vertex is $x'$. 
We first prove that $B'=B\cup\{j_t\}\setminus\{i\}$ is a linearly independent set. We can observe this in the graph $G_{B'}=(V,E_{B'})$. 
Recall that, by Lemma \ref{lem:forest}, $G_B$ has a forest with single loops structure. Adding $j_t$ and removing $i$ from $B$ keeps the structure, and each component of $G_{B'}$ has exactly one loop. Hence, using again Lemma \ref{lem:forest}, $B \cup \{j_t\} \setminus \{i\}$ is a basis. In order to conclude that it corresponds to $x'$, observe that the support of $x'$ is contained in $D \cup \{1,j_t\}\setminus \{i\}\subset  B \cup \{j_t\} \setminus \{i\}$.

\smallskip

\emph{Case 2: $x_{j_t}'=0$.} Hence, we are in a degenerate pivot and $x'=x$. We claim that there is no $x$-augmenting path $P^A$ that contains $e_{j_t}$ in $E_D$. Otherwise, if such $P^A$ exists, define $y\in \R^m$ such that
\begin{displaymath}
y_e=\left\{
\begin{array}{cc}
  1-x_e,   & \textrm{ if } e\in E_{P^A},\\
  x_e,   & \textrm{ if } e\notin E_{P^A}.
\end{array}
\right.
\end{displaymath}
Since $P^A$ is $x$-augmenting, we have $Ay=b$ and $y_{j_t}=1-x_{j_t}=1$. Let $v_j$ be one of the endpoints of $P^A$, then $y_{(v_j,v_j)}=1-x_{(v_j,v_j)}=0$. Define $B^{(y)}=B\cup\{j_t\}\setminus\{j\}$. Similarly to Case 1, we can deduce that $B^{(y)}$ is a basis of $A$, and by definition $B^{(y)}y_{B^{(y)}}=b$. Thus $y$ is a basic feasible solution, with $y_{j_t}=1$, a contradiction. 

Notice that $E_P\subset E_D$, and $P$ is not $x$-augmenting. Then by Claim~\ref{cl:P-starts-with-loop} there exists at least one edge $e\in P$, which is either $m_{i_1}$-disliked or woman-disliked such that $x_{e}'=x_{e}=0$. Following an argument similar to Case 1, we can let the edge $e_{j_\ell}=e$ leave the basis as to obtain the basis $B'=B\cup\{j_t\}\setminus\{j_\ell\}$ associated to $x'=x$.

\smallskip

If (II) happens, then the entering valid edge $e_{j_t}$ creates a cycle $Q$ in $T$, where $Q$ must be even. $Q$ may be incident to at most one loop. By Lemma~\ref{lem:forest}, the pivoting lets one of the valid edges in $Q$ exit the basis, in order to form $B'$. Suppose $Q$ has $p$ (valid) edges  incident to $p$ nodes, and recall that  $p$ is even. 

\begin{claim}\label{cl:every-other-w-disliked}
Every other edge of $Q$ is woman-disliked.
\end{claim}
\smallskip
\noindent \emph{\underline{Proof of Claim~\ref{cl:every-other-w-disliked}.}}
\emph{Case 1: $Q$ contains $m_i$.} We claim that $(m_i,w)$ belongs to $Q$. If $(m_i,w)$ does not belong to $Q$, then by Corollary~\ref{obs:almost-feasible-v-i}, the edges of $Q$ have to be disliked by nodes of $Q\setminus \{m_i\}$ (for $m_i$ dislikes $(m_i,m_i)$), contradicting Definition~\ref{def:disliked}. Now let
\begin{equation}\label{eq:alter-disliked-cycle}
    Q=(m_{i_1},w_{i_1}),(w_{i_1},m_{i_2}),\dots,(m_{i_{\frac{p}{2}}},w_{i_{\frac{p}{2}}}),(w_{i_{\frac{p}{2}}},m_{i_1})
\end{equation}

where $m_{i_1}=m_i$ and $w_{i_1}=w$. Since $(m_{i_1},w_{i_1})$ is $m_1$-disliked, then the edges on $Q$ are disliked by
$$m_1,w_{i_1},m_{i_2},w_{i_2},\dots,m_{i_{\frac{p}{2}-1}},w_{i_{\frac{p}{2}-1}},m_{i_{\frac{p}{2}}},w_{i_{\frac{p}{2}}}$$
in this order. 

\smallskip


\emph{Case 2: $Q$ does not contain $m_i$.} Let
$$Q=(m_{i_1},w_{i_1}),(w_{i_1},m_{i_2}),\dots,(m_{i_{\frac{p}{2}}},w_{i_{\frac{p}{2}}}),(w_{i_{\frac{p}{2}}},m_{i_1}).$$
Since the unique $m_1$-disliked edge does not belong to $Q$ (otherwise $m_i$ is incident to $Q$ and we are in case $1$), the edges on $Q$ must be disliked by man, woman alternatively.  \hfill $\small \blacksquare$

\smallskip

By a similar argument as in (I), we can obtain the desired result. In detail, if all woman-disliked edges on $Q$ have $x$-value $1$, then $Q$ is $x$-alternating, with all woman-disliked edges on $Q$ having  $x$-value $1$, and all man-disliked edges having $x$-value $0$. 
Define $y\in \R^m$ as 
\begin{displaymath}
y_e=\left\{
\begin{array}{cc}
  1-x_e,   & \textrm{ if } e\in E_Q,\\
  x_e,   & \textrm{ if } e\notin E_Q.
\end{array}
\right.
\end{displaymath}
It is not difficult to check that $y$ is a basic feasible solution with $y_{j_t}=1$, and $B'=B \cup \{j_t\}\setminus \{j_\ell\}$ is a basis corresponding to $y$, where $j_\ell$ is chosen to be any woman-disliked edge of $Q$. If conversely there is a woman-disliked edge $e$ with $x_e=0$, then we have $x=x'$ and let $e_{j_\ell}=e$, $B'=B \cup \{j_t\}\setminus \{j_\ell\}$, and the claim follows analogously.

\end{proof}

\begin{remark}
In the proof of Lemma \ref{lem:cardpivot}, we construct a vector $y$ several times, as the symmetric difference of the matching $\mu_x$ and the alternating subgraph ($P$ or $Q$). Formally, $y$ is the characteristic vector of $\mu_x\Delta E_P$ (resp., $\mu_x\Delta E_Q)$. In the theory of bipartite matching, it is well-known that a matching $\mu$ is maximal if and only if no $\mu$-augmenting path exists. We can see the similarity in our redefined matching problem.
\end{remark}

\subsubsection{Ordinal Pivots}\label{sec:ordinal-pivot}
This is the continuation of the previous section. We discuss how Scarf's algorithm finds an entering column $j^*$ when $j_\ell$ leaves $D$, as to form $D'$. We still assume $u_1\in \cal{L}$ and that $m_i$ is the separator in $D$.\par

If $e_{j_\ell}$ is $v_{i_\ell}$-disliked in $D$, then according to Definition~\ref{def:appopdef}, column $j_r$ satisfies that $c_{i_\ell,j_r}$ is the second least element on row $i_\ell$ in $C_D$. Recall that we call $j_r$ (resp., $e_{j_r}$) a \emph{reference column} (resp., \emph{edge}). Suppose the reference edge $e_{j_r}$ is $v_{i_r}$-disliked w.r.t.~$D$. Let $j^*$ be the entering column in the ordinal pivot. The following observation follows easily from the last definition.

\begin{lemma}\label{lem:who-dislikes-who}
In the ordinal basis $D'=D \setminus\{j_\ell\}\cup\{j^*\}$, we have that $e_{j_r}$ is $v_{i_\ell}$-disliked and $e_{j^*}$ is $v_{i_r}$-disliked.
\end{lemma}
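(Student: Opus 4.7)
The plan is to read off the statement as a direct consequence of the change-of-utility analysis supplied by Lemma~\ref{lem:app:changeofu}, combined with the row-column bijection for ordinal bases encoded by Proposition~\ref{prop:app1.1} and Definition~\ref{def:disliked}. All the substantive work has been done already; what remains is careful bookkeeping.

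First I would record that both candidate columns $j_r$ and $j^*$ belong to $D'$: the reference column $j_r$ because $j_r \in D \setminus \{j_\ell\} \subseteq D'$, and $j^*$ by construction of the ordinal pivot. Applying Lemma~\ref{lem:app:changeofu} to the pivot $D \to D'$ then gives
$$u'_{i_\ell} \;=\; c_{i_\ell,\,j_r} \qquad \text{and} \qquad u'_{i_r} \;=\; c_{i_r,\,j^*},$$
while $u'_i = u_i$ for all other rows $i$.

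Next I would invoke Definition~\ref{def:disliked}: in the ordinal basis $D'$, the $v_{i_\ell}$-disliked edge is the unique $e_j$ with $j \in D'$ satisfying $c_{i_\ell, j} = u'_{i_\ell}$, uniqueness being guaranteed by Proposition~\ref{prop:app1.1} applied to the ordinal basis $D'$. The first display identifies this edge as $e_{j_r}$. The symmetric argument, using the second identity $u'_{i_r} = c_{i_r, j^*}$, shows that $e_{j^*}$ is the $v_{i_r}$-disliked edge of $D'$.

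I do not anticipate any real obstacle; the only point that merits a sanity check is that the two identifications above genuinely are the respective row minimizers over all of $D'$ (not just over $D \setminus \{j_\ell\}$). For row $i_\ell$ this holds because $c_{i_\ell, j_r} = \bar u_{i_\ell}$ minimizes over $D \setminus \{j_\ell\}$ by definition of the reference column, and $c_{i_\ell, j^*} > \bar u_{i_\ell}$ since $j^* \in K$ (condition~\eqref{eq:appchoiceoford}). For row $i_r$ it holds because the maximizing choice of $j^*$ is exactly designed so that $c_{i_r, j^*}$ becomes the new row minimum, as reflected in Lemma~\ref{lem:app:changeofu}. This closes the argument.
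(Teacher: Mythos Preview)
Your proof is correct and follows the same approach as the paper, which treats the lemma as an immediate observation from Definition~\ref{def:appopdef} (the paper gives no formal proof, only the remark ``The following observation follows easily from the last definition''). Your version simply makes the bookkeeping explicit via Lemma~\ref{lem:app:changeofu}, which is itself a direct consequence of that definition.
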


Using the notations above, the following property plays a key role in the ordinal pivot:
\begin{lemma}\label{lem:referinM}
$c_{i_\ell,j_r}\in\cal{M}$.
\end{lemma}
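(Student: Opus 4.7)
The plan is to invoke Lemma~\ref{lem:cardpivot}, which under the algorithm's cardinal pivoting rule restricts $e_{j_\ell}$ to be either the loop $e_i$ on the separator $m_i$, or a woman-disliked edge in $D$. I will handle these two cases separately, using the path/cycle structure identified in the proof of Lemma~\ref{lem:cardpivot}.

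If $e_{j_\ell}=e_i$, so $i_\ell=m_i$, then the minimum of row $m_i$ of $C_D$ is $c_{m_i,e_i}=0$, uniquely realized at column $e_i$ since by the construction of $C$ the only $\mathcal{S}$-entry in row $m_i$ is at $(m_i,m_i)$. Proposition~\ref{prop:separator} guarantees that the separator $m_i$ is incident in $G_D$ to at least one valid edge, contributing an entry in $\mathcal{M}$ on row $m_i$. Since $\mathcal{M}$-entries are strictly smaller than $\mathcal{L}$- and $\mathcal{XL}$-entries, the second-smallest entry of row $m_i$ restricted to $C_D$ lies in $\mathcal{M}$, as required.

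If $e_{j_\ell}$ is woman-disliked, then $i_\ell=w$ for some $w\in W$. By the proof of Lemma~\ref{lem:cardpivot}, $e_{j_\ell}$ lies on the path $P$ or the even cycle $Q$ created when $e_{j_t}$ is added to $E_B$. I first establish $E_P,E_Q\subset E_D$: since $u_1\in\mathcal{L}$, $m_1$ is incident to no edge in $E_D$, hence (using $R\subset D$) to no valid edge in $E_B$; Claim~\ref{cl:P-starts-with-loop} identifies the endpoints of $P$ as $m_i$ and $\bar w$, and $Q$ consists only of valid edges, so $m_1$ is a vertex of neither structure; therefore $E_P,E_Q\subset R\cup\{e_{j_t}\}\subset E_D$. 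If $e_{j_\ell}$ is a valid edge, then the other $P$- or $Q$-neighbor of $w$ contributes a second valid edge incident to $w$ in $E_D$, hence a second $\mathcal{M}$-entry on row $w$; moreover, the loop $(w,w)$ cannot be in $D$ (else it would be the unique $w$-disliked edge, contradicting that the valid edge $e_{j_\ell}$ is $w$-disliked), so row $w$ in $C_D$ has no $\mathcal{S}$-entry and its two smallest entries both lie in $\mathcal{M}$. If instead $e_{j_\ell}=(w,w)$ (the loop on $w=\bar w$, possible only in the path case), then $E_P\subset E_D$ places the valid edge $(m_{i_{p/2}},\bar w)$ inside $E_D$, contributing an $\mathcal{M}$-entry on row $\bar w$ strictly above the unique $0$-entry at $(\bar w,\bar w)$, and the conclusion follows analogously.

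The main technical obstacle will be establishing $E_P, E_Q\subset E_D$, which reduces to a careful check that $m_1\notin V_P\cup V_Q$ under the hypothesis $u_1\in\mathcal{L}$. Once this is in place, the case analysis is a direct verification using the three-tier separation $\mathcal{S}<\mathcal{M}<\mathcal{L}<\mathcal{XL}$ built into the construction of $C$.
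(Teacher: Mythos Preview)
Your argument is correct for the two cases you consider, but it proves a narrower statement than the lemma as written: you assume $e_{j_\ell}$ is one of the choices produced by the pivoting rule of Algorithm~\ref{alg:pivoting} (loop on the separator, or a woman-disliked edge on $P$ or $Q$), whereas the lemma as stated applies to any leaving column $j_\ell$ in a cardinal pivot. Since downstream the lemma is only invoked under those specific choices, your restriction is harmless for the paper's purposes, but it does amount to a forward reference to the pivoting rule and to the internal structure (Claims~\ref{cl:P-starts-with-loop} and~\ref{cl:every-other-w-disliked}) of Lemma~\ref{lem:cardpivot}'s proof.

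The paper's own argument is shorter and does not depend on which $j_\ell$ was selected. It simply observes that $c_{i_\ell,j_r}\in\mathcal{S}$ would force $c_{i_\ell,j_\ell}<0$, and that $c_{i_\ell,j_r}\in\mathcal{L}\cup\mathcal{XL}$ would mean row $i_\ell$ of $C_D$ has at most one entry in $\mathcal{S}\cup\mathcal{M}$, hence at most one edge of $E_D$ incident to $v_{i_\ell}$, necessarily $e_{j_\ell}$. Then the new feasible basis $B'=D\cup\{1\}\setminus\{j_\ell\}$ would leave $v_{i_\ell}$ with no incident edge, contradicting the forest-with-single-loops structure of Lemma~\ref{lem:forest}. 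This is a two-line contradiction that bypasses the path/cycle case analysis entirely. What your route buys is an explicit identification of the second incident edge in each case; what the paper's route buys is independence from the pivoting rule and a much shorter proof.
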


\begin{proof}

We can claim this by showing $c_{i_\ell,j_r}\notin \mathcal{S}\cup \mathcal{L}\cup \mathcal{XL}$.\par
If $c_{i_\ell,j_r}=0$, then $c_{i_\ell,j_r}<c_{i_\ell,j_r}=0$, a contradiction. Thus $c_{i_\ell,j_r}\notin \mathcal{S}$.\par
If $c_{i_\ell,j_r}\in \cal{L}\cup \cal{XL}$, then $c_{i_\ell,j_r}\ge k+1$. Since $c_{i_r,j_\ell}$ is the second least element from $\{c_{i_r,j} : j \in D\}$, there is at most one edge in $E_D$ incident to $v_{i_r}$, which, if exists, must be the leaving edge $e_{j_\ell}$. From the discussion in Section~\ref{sec:pivoting}, $B'=D\cup\{1\}\setminus\{j_\ell\}$ is a basis visited by Scarf algorithm. As a result, however, there is no edge incident to $v_{i_r}$ in $E_{B'}$, which contradicts the feasibility of $B'$. Thus $c_{i_\ell,j_r}\in \cal{M}$. 
    
\end{proof}
\smallskip

From {Remark}~\ref{lem:who-dislikes-who} and Lemma~\ref{lem:referinM} we can immediately deduce that $e_{j_r}$ is not a loop. 
\begin{corollary}\label{referlemma}
In any ordinal pivot, the reference edge $e_{j_r}$ is a valid edge.
\end{corollary}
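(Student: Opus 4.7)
The plan is to derive the corollary as a direct consequence of Lemma~\ref{lem:referinM} together with the explicit construction of the matrix $C$ given in Section~\ref{sec:matrixdesign}. Recall that each entry of $C$ falls into exactly one of the blocks $\mathcal{S}$, $\mathcal{M}$, $\mathcal{L}$, $\mathcal{XL}$, and the block is determined by the type of edge and the node: $c_{i,j}\in\mathcal{S}$ iff $e_j$ is the loop on $v_i$; $c_{i,j}\in\mathcal{M}$ iff $e_j$ is a valid edge incident to $v_i$; $c_{i,j}\in\mathcal{L}$ iff $e_j$ is a valid edge not incident to $v_i$; $c_{i,j}\in\mathcal{XL}$ iff $e_j$ is a loop not on $v_i$.

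First, I would invoke Lemma~\ref{lem:referinM} to obtain $c_{i_\ell,j_r}\in\mathcal{M}$. Then I would rule out, case by case, the possibility that $e_{j_r}$ is a loop. If $e_{j_r}=(v_{i_\ell},v_{i_\ell})$, the corresponding entry would lie in $\mathcal{S}$; and if $e_{j_r}=(v,v)$ for some $v\neq v_{i_\ell}$, the entry would lie in $\mathcal{XL}$. Either case contradicts $c_{i_\ell,j_r}\in\mathcal{M}$.

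Therefore $e_{j_r}$ cannot be a loop, and hence must be a valid edge, as claimed. No separate case analysis on degeneracy or pivoting behavior is needed here, so I do not anticipate any obstacle beyond quoting the structure of the blocks $\mathcal{S},\mathcal{M},\mathcal{XL}$ introduced in Section~\ref{sec:matrixdesign}. The only point requiring care is to state explicitly that the partition of entries of $C$ into these four blocks is disjoint, which follows immediately from the construction.
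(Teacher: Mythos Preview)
Your proposal is correct and takes essentially the same approach as the paper: the paper deduces the corollary in one line from Lemma~\ref{lem:referinM} (together with Lemma~\ref{lem:who-dislikes-who} to identify the roles of $i_\ell$ and $j_r$), and your argument makes explicit precisely the block-structure reasoning on $C$ that turns $c_{i_\ell,j_r}\in\mathcal{M}$ into the conclusion that $e_{j_r}$ is a valid edge.
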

The ordinal pivot will change the utility vector $u$, but only in components $u_{i_\ell}$ and $u_{i_r}$. The following lemma translates Lemma~\ref{lem:app:changeofu}, i.e.,~some basic facts on the mechanics of Scarf's algorithm, into graphic language.
\begin{lemma}\label{lem:utilitychange}
The ordinal pivot gives $(D,u)\to(D',u')$, where $u_{i_\ell}'>u_{i_\ell}$, 
$u_{i_r}'<u_{i_r}$, and $u_{i}'=u_{i},\textrm{ for $i\neq i_\ell,i_r$}$.
Correspondingly, we have that:
\begin{enumerate}
    \item $v_{i_\ell}$ dislikes the leaving edge $e_{j_\ell}$ in $D$ and dislikes the reference edge $e_{j_r}$ in ${D'}$.
    \item $v_{i_r}$ dislikes the reference edge $e_{j_r}$ in $D$ and dislikes the entering edge $e_{j^*}$ in ${D'}$.
    \item Any other node dislikes the same edge in $D$ and ${D'}$.
\end{enumerate}
\end{lemma}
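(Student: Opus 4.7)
The plan is to derive Lemma~\ref{lem:utilitychange} as a direct consequence of Lemma~\ref{lem:app:changeofu} together with the bijection between nodes and disliked edges from Definition~\ref{def:disliked} and Proposition~\ref{prop:app1.1}. The utility change assertions $u'_{i_\ell}>u_{i_\ell}$, $u'_{i_r}<u_{i_r}$, and $u'_i=u_i$ for $i\neq i_\ell,i_r$ are exactly the content of Lemma~\ref{lem:app:changeofu}, so nothing further is needed for that part. What remains is simply to read off, from the explicit formulas in Lemma~\ref{lem:app:changeofu}, which column in $D$ (resp.~in $D'$) realizes the minimum in each row.

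For item~1: in $D$, by definition $e_{j_\ell}$ is $v_{i_\ell}$-disliked since $c_{i_\ell,j_\ell}=u_{i_\ell}$; in $D'$, Lemma~\ref{lem:app:changeofu} gives $u'_{i_\ell}=c_{i_\ell,j_r}$, and $j_r\in D'$, so by Definition~\ref{def:disliked} the $v_{i_\ell}$-disliked edge in $D'$ is $e_{j_r}$. For item~2: by the definition of the reference column, $c_{i_r,j_r}$ is a row minimizer of $C_D$, i.e.~$u_{i_r}=c_{i_r,j_r}$, so $v_{i_r}$ dislikes $e_{j_r}$ in $D$; and in $D'$, Lemma~\ref{lem:app:changeofu} gives $u'_{i_r}=c_{i_r,j^*}$ with $j^*\in D'$, hence $v_{i_r}$ dislikes $e_{j^*}$ in $D'$.

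For item~3 I would argue as follows. Fix $i\notin\{i_\ell,i_r\}$ and let $e_{j}$ be the $v_i$-disliked edge in $D$, so $u_i=c_{i,j}$ with $j\in D$. We must show $j\in D'$, i.e.~$j\neq j_\ell$. But by Proposition~\ref{prop:app1.1} the bijection between rows and columns given by $D$ is one-to-one, and $e_{j_\ell}$ is already the $v_{i_\ell}$-disliked edge with $i_\ell\neq i$; hence $j\neq j_\ell$, so $j\in D'$. Since $u'_i=u_i=c_{i,j}$, the unique column in $D'$ realizing the row-$i$ minimum is still $e_j$, confirming that $v_i$ dislikes the same edge in both bases.

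There is essentially no obstacle here; the lemma is a bookkeeping translation of the already-established mechanics of Scarf's ordinal pivot into the graphic ``disliked edge'' language introduced in Section~\ref{sec:notation}. The only subtlety worth stating carefully is the use of Proposition~\ref{prop:app1.1} to ensure that, for rows other than $i_\ell,i_r$, the unique row-minimizing column of $C_D$ survives in $D'$, so that the disliked edge is preserved rather than merely having the same value.
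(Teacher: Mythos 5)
Your proof is correct and follows exactly the route the paper intends: the paper states Lemma~\ref{lem:utilitychange} as a direct translation of Lemma~\ref{lem:app:changeofu} into the ``disliked edge'' language and gives no separate proof, and your argument supplies precisely that bookkeeping (including the correct use of Proposition~\ref{prop:app1.1} to see that the row minimizers for rows other than $i_\ell,i_r$ survive in $D'$).
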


As a continuation of Lemma \ref{lem:cardpivot}, we will see that a leaving loop $e_i$ or a leaving woman-disliked edge will always return a new man-disliked valid edge $e_{j^*}$ in an ordinal pivot.
\begin{lemma}\label{lem:wdisordpivot}
Assume $u_1\in \mathcal{L}$, $\{2,\dots, k\} \cap D \neq \emptyset$, and that the leaving edge $e_{j_\ell}$ is woman-disliked (w.r.t.~$D$). Then:
\begin{enumerate}
\item  $v_{i_\ell}$ is a woman and $v_{i_r}$ is a man;
\item $e_{j^*}$ is man-disliked in $D'$; \item the woman $v_{i_\ell}$'s utility increases, i.e., $u'_{i_\ell}>u_{i_\ell}$, and any other woman's utility does not change.
\end{enumerate}
\end{lemma}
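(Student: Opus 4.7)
The plan is to show that all three conclusions follow once we establish that $v_{i_r}$ is a man, and then point out that this is a direct consequence of Lemma~\ref{lem:referinM} combined with Corollary~\ref{obs:almost-feasible-v-i}.

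First I would observe that since $e_{j_\ell}$ is woman-disliked in $D$ and $v_{i_\ell}$ is by definition the node that dislikes $e_{j_\ell}$, part 1 already tells us $v_{i_\ell}\in W$. So the first real task is to identify who $v_{i_r}$ is. By Lemma~\ref{lem:referinM}, we have $c_{i_\ell,j_r}\in\mathcal{M}$. By the construction of $C$ in Section~\ref{sec:matrixdesign}, an entry of row $i_\ell$ lies in $\mathcal{M}$ only in columns corresponding to valid edges incident to $v_{i_\ell}$. Hence $e_{j_r}$ is a valid edge incident to the woman $v_{i_\ell}$, i.e., $e_{j_r}=(m',v_{i_\ell})$ for some man $m'\in M$.

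Next I would apply Corollary~\ref{obs:almost-feasible-v-i} to the valid edge $e_{j_r}\in E_D$: it must be $m'$-disliked, $v_{i_\ell}$-disliked, or $m_1$-disliked in $D$. By the bijection between rows and columns of $D$ given by Definition~\ref{def:disliked}, $v_{i_\ell}$ already dislikes the distinct edge $e_{j_\ell}$, so $v_{i_\ell}$ cannot dislike $e_{j_r}$. Therefore $v_{i_r}\in\{m',m_1\}$, and in both cases $v_{i_r}$ is a man, proving part 1.

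For parts 2 and 3 I would simply invoke Lemma~\ref{lem:utilitychange}. Part 2: that lemma says $e_{j^*}$ is $v_{i_r}$-disliked in $D'$, and since $v_{i_r}\in M$ we conclude $e_{j^*}$ is man-disliked in $D'$. Part 3: the only utility changes in an ordinal pivot are $u'_{i_\ell}>u_{i_\ell}$ and $u'_{i_r}<u_{i_r}$; since $v_{i_r}\in M$, no woman other than $v_{i_\ell}$ has her utility altered, and the woman $v_{i_\ell}$'s utility strictly increases. The only real content of the lemma is the identification of $v_{i_r}$ as a man, and the mild obstacle is merely remembering that Lemma~\ref{lem:referinM} forces $e_{j_r}$ to be a valid edge incident to $v_{i_\ell}$, which together with the bijection from Definition~\ref{def:disliked} pins down the side of $v_{i_r}$.
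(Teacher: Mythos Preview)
Your proof is correct for the statement as written and matches the paper's approach for parts~1 and~3. For part~2, your argument is in fact simpler than the paper's: you deduce ``man-disliked'' directly from $v_{i_r}\in M$ via Lemma~\ref{lem:utilitychange}, and that suffices for the claim as stated. The paper's proof of part~2, however, establishes something stronger that is not in the lemma's statement but is needed downstream: it shows by contradiction that $e_{j^*}$ is a \emph{valid} edge, not a loop. The argument rules out $e_{j^*}=(v_{i_r},v_{i_r})$ by analyzing what the separator of $D'$ would have to be (it would force the separator to drop to $m_{i-1}$, which in turn would force the $m_1$-disliked edge incident to $m_i$ to have left $D$, contradicting that $e_{j_\ell}$ is woman-disliked). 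This extra conclusion is precisely what Lemma~\ref{lem:cardpivot} and the induction in Section~\ref{sec:convergence} rely on, since the cardinal-pivot analysis requires the entering edge $e_{j_t}$ to be a man-disliked \emph{valid} edge. So your route is cleaner for the lemma as stated, while the paper's route buys the additional property that keeps the overall induction going.
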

\begin{proof}

Since $e_{j_\ell}$ is woman-disliked and $v_{i_\ell}$-disliked (w.r.t.~$D$), $v_{i_\ell}$ can only be a woman. Since by Corollary~\ref{referlemma}, the reference edge $e_{j_r}$ is a valid edge and is not disliked by its incident woman $v_{i_\ell}$, then it is disliked by a man, by Corollary~\ref{obs:almost-feasible-v-i}. Therefore, $v_{i_r}$ is a man. This shows 1. 

The ordinal pivot introduces a new $v_{i_r}$-disliked edge $e_{j^*}$ into $D'$. Recall that  $m_i$ is the separator in $D$. Assume by contradiction that $e_{j^*}$ is a loop. Then $e_{j^*}=(v_{i_r},v_{i_r})$. By Lemma \ref{lem:man-order}, the new ordinal basis $D'$ must have a separator. We claim that this separator can only be $m_{i-1}$. Indeed, loops corresponding to $e_j$ for $j\geq i$ belong to $D'$. Hence, one of $m_1,\dots, m_{i-1}$ is the separator in $D'$. However, if $m_j$ is the separator for some $j\leq i-2$, then we contradict Lemma~\ref{lem:man-order}(i). Hence, $m_{i-1}$ is the separator.

By definition, all valid edges incident in $D$ to $m_i$ must leave $D$. By Lemma~\ref{lem:man-order}(ii), one such edge is $m_1$-disliked. 
This contradicts the fact that $e_{j_\ell}$ is woman-disliked. Therefore, $e_{j^*}$ is a valid edge. This shows 2. \par
As for the change of utility vector, we have by Lemma~\ref{lem:utilitychange}, part 1,
$$u_{i_\ell}'=c_{i_\ell,j_r}>c_{i_\ell,j_r}=u_{i_\ell}.$$
Since $v_{i_r}$ is a man, it follows from Lemma~\ref{lem:utilitychange}, part 3 that every other woman's utility does not change. This shows 3. 
\end{proof}

We remark that, for the following fundamental lemma to holds, we need the extra properties of matrix $C$ that distinguish it from the generic matrix defined by Bir\'o and Fleiner~\cite{biro2016matching}, as discussed in Section~\ref{sec:matrixdesign}.

\begin{lemma}\label{lem:mloopordpivot}
Assume $u_1\in \mathcal{L}$ and that the leaving edge $e_{j_\ell}$ is the loop $(m_i,m_i)$.  Then:
\begin{enumerate}
    \item  $v_{i_\ell}=m_i$, $v_{i_r}=m_1$;
    \item Let $u'$ be the utility vector of the ordinal basis $D'$ obtained wal pivot. Then a new $m_1$-disliked (w.r.t.~$D'$) valid edge $e_{j^*}$ enters and:
    \begin{enumerate}
\item[(i)] If $2\le i <k$, $e_{j^*}$ is incident to $m_{i+1}$, and 
$$u_1'<u_1,\textrm{ but still $u_1'\in \mathcal{L}$}.$$
$$u_{i}'=u_i,\textrm{ for $k<i\le 2k$}.$$
\item[(ii)] If $i=k$, $e_{j^*}$ is incident to $m_1$, and
$$u_1'<u_1,\textrm{ and $u_1'\in \mathcal{M}$}.$$
$$u_{i}'=u_i,\textrm{ for $k<i\le 2k$}.$$
\end{enumerate}
\end{enumerate}
\end{lemma}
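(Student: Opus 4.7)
My plan is to mimic the structure of Lemma~\ref{lem:wdisordpivot}, exploiting the explicit block structure of $C$ from Section~\ref{sec:matrixdesign} together with the ordinal pivot rule (Definition~\ref{def:appopdef}). For Statement~1, since $c_{m_i,(m_i,m_i)} = 0$ is the smallest entry on row $m_i$, the loop is uniquely $m_i$-disliked in $D$ and $v_{i_\ell} = m_i$. I will then identify the reference column $j_r$ by noting that the second-smallest entry on row $m_i$ restricted to $C_D$ sits at the rightmost column of $m_i$'s block appearing in $D$: row $m_i$ places $\mathcal{M}$-values on valid edges incident to $m_i$ decreasing in $m_i$-preference from left to right, and $\mathcal{S} < \mathcal{M} < \mathcal{L} < \mathcal{XL}$. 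By Proposition~\ref{prop:separator} and Lemma~\ref{lem:man-order}(ii), this rightmost column coincides with the $m_1$-disliked edge $(m_i, w^*)$ in $D$, so $j_r = (m_i, w^*)$ and $v_{i_r} = m_1$. Lemma~\ref{lem:utilitychange} then immediately yields $u'_{m_i} = c_{m_i, j_r} \in \mathcal{M}$ and leaves every woman's utility unchanged.

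Next I will locate $j^*$, which by Definition~\ref{def:appopdef} maximizes $c_{1,k}$ over the set $K$ of columns $k \notin D$ satisfying $c_{v,k} > \bar u_v$ for every $v \ne m_1$. Since $c_{v,(v,v)} = 0 \not> \bar u_v$, loops are ruled out of $K$. The central claim to establish is that no valid edge $(m_{i'}, w) \notin D$ with $2 \le i' \le i$ lies in $K$: since $D$ is an ordinal basis, some $v^*$ satisfies $c_{v^*,(m_{i'},w)} \le u_{v^*}$, and I will trace through the cases. For $i' < i$, the left-to-right decrease of $\mathcal{L}$-values on row $m_1$ together with $u_1 = c_{1,(m_i,w^*)}$ forces $c_{1,(m_{i'},w)} > u_1$, ruling out $v^* = m_1$; a separate check rules out $v^* = m_i$ (whose row entries on non-incident valid edges lie in $\mathcal{L}$, while $u_{m_i} = 0$), so $v^* \in \{m_{i'}, w\}$ with $\bar u_{v^*} = u_{v^*}$, violating the $K$-condition. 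For $i' = i$ one excludes $v^* \in \{m_i, w\}$ analogously, forcing $v^* = m_1$; this places $(m_i, w)$ strictly to the right of $(m_i, w^*)$ in $m_i$'s block and hence below $w^*$ in $m_i$'s ranking, yielding $c_{m_i,(m_i,w)} < \bar u_{m_i}$, again contradicting the $K$-condition. The case $i' = i$ is the most delicate and is the expected main obstacle, since it requires the left-to-right decrease of $\mathcal{L}$-values on row $m_1$ and the in-block preference ordering on row $m_i$ to work together.

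With these exclusions in hand, I will determine $j^*$ using the column ordering on row $m_1$, where the $c_{1,\cdot}$-values are largest on non-incident valid edges (block $\mathcal{L}$, decreasing left-to-right) and then on $m_1$'s valid edges (block $\mathcal{M}$, in $m_1$'s preference order). For case~(i) with $i < k$, I will pick a woman $\bar w$ with $(\bar w, \bar w) \in D$ (guaranteed by Lemma~\ref{unmatchedwoman}); a direct check shows $(m_{i+1}, \bar w) \in K$, since the only rows whose $c$-values are not automatically above $\bar u$ are $m_{i+1}$ and $\bar w$, where $\bar u = 0$ is strictly dominated by $\mathcal{M}$-values. Combined with the previous step, the leftmost admissible block is $m_{i+1}$'s, so $j^*$ is incident to $m_{i+1}$ with $c_{1, j^*} \in \mathcal{L}$, hence $u'_1 \in \mathcal{L}$. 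For case~(ii) with $i = k$, all blocks $m_2,\ldots,m_k$ are forbidden and $m_{k+1}$ does not exist, so $j^*$ must lie in $m_1$'s block (non-empty in $K$ via $(m_1, \bar w)$), giving $c_{1, j^*} \in \mathcal{M}$ and $u'_1 \in \mathcal{M}$. Finally, the strict inequality $u'_1 < u_1$ follows in both cases from the ordinal property of $D$: if it failed, then combined with $c_{v,j^*} > \bar u_v \ge u_v$ for $v \ne m_1$, the column $c_{j^*}$ would strictly dominate $u$ componentwise, contradicting the fact that $D$ is ordinal.
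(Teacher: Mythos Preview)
Your proposal is correct and follows essentially the same approach as the paper: both identify $j_r$ as the rightmost column in $D$ (hence $m_1$-disliked), use the witness $(m_{i+1},\bar w)$ (resp.\ $(m_1,\bar w)$) from Lemma~\ref{unmatchedwoman} to show $K$ is nonempty in the right block, rule out $m_i$'s block via the violation $c_{m_i,(m_i,w)}<\bar u_{m_i}$, and derive $u_1'<u_1$ from the ordinality of $D$. The only organizational difference is that the paper first establishes $u_1'<u_1$ and uses it as an upper bound to sandwich $j^*$ between $(m_i,w^*)$ and $(m_{i+1},\bar w)$, thereby avoiding your explicit exclusion of blocks $m_2,\dots,m_{i-1}$ from $K$; your extra step is harmless but unnecessary (and note your blanket claim that ``loops are ruled out of $K$'' does not cover column~$1$, though this is irrelevant since $c_{1,1}=0$ is never the maximizer).
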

\begin{proof}

Loop $(m_i,m_i)$ is $m_i$-disliked w.r.t.~$D$, thus $v_{i_\ell}=m_i$. By Lemma~\ref{lem:referinM}, $c_{i_\ell,j_r}\in\cal{M}$, thus the edge $e_{j_r}$ is incident to $m_i$. By Lemma \ref{lem:man-order}(ii), the rightmost column in $D$ is also incident to $m_i$. Moreover, the rightmost column in $D$ is exactly $j_r$ because $m_i$'s second worst choice $e_{j_r}$ between all edges is the worst choice between all valid edges, which corresponds to the rightmost column. Since $u_1\in \mathcal{L}$, the rightmost column $j_r$ is $m_1$-disliked, hence $v_{i_r}=m_1$. This shows 1. \par
Notice that by Lemma \ref{unmatchedwoman}, there is a woman $\bar{w}$ whose loop $(\bar{w},\bar{w})\in E_D$. We use this fact to prove part 2..\par
(i) If $2\le i<k$, then by Definition \ref{def:appopdef}, the entering column $j^*$ satisfies 
\begin{equation}\label{lem:mloopordpivoteq1}
c_{hj^*}>\bar{u}_\ell\textrm{ for all $h\neq 1$}.
\end{equation}
Consider the edge $e_j=(m_{i+1},\bar{w})$. Column $c_j$ satisfies the above condition and $j\notin D$ (By Lemma \ref{lem:man-order}(ii), the rightmost column in $D$ is incident to $m_i$, thus no valid edge incident to $m_{i+1}$ belongs to $D$). Therefore,
\begin{equation}\label{lem:mloopordpivoteq2}
u_1'=c_{1j^*}\ge c_{1,(m_{i+1},\bar{w})}.
\end{equation}
Thus $u_1'\in \mathcal{L}$. Notice that $u_1'<u_1$ must hold, otherwise $c_{j^*}>u$, a contradiction. Suppose $e_{j_r}=(m_i,w)$, then
\begin{equation}\label{lem:mloopordpivoteq3}
u_1'<u_1=c_{1,(m_i,w)}.
\end{equation}
(\ref{lem:mloopordpivoteq2}) and (\ref{lem:mloopordpivoteq3}) imply that edge $e_{j^*}$ is incident to either $m_i$ or $m_{i+1}$. Suppose the former happens, then by (\ref{lem:mloopordpivoteq3}), $c_{1,j^*}<c_{1,(m_i,w)}=c_{1,j_r}$, thus column $j^*$ is on the right of column $j_r$ in $C$, which implies 
$$c_{i,j^*}<c_{i,j_r}=\bar{u}_i.$$
This contradicts (\ref{lem:mloopordpivoteq1}) for $\ell=i$. Therefore, $e_{j^*}$ must be incident to $m_{i+1}$.\par
This ordinal pivot only changes the row minimizer of $m_i$ and $m_1$. Since no row minimizer corresponding to a women changes, we have
$$u_i'=u_i,\textrm{ for $k<i\le 2k$}.$$
(ii) If $i=k$, then consider the edge $e_j=(m_1,\bar{w})$. With a similar argument as in (i) we argue that $e_{j^*}$ can only be incident to $m_1$. Now suppose $e_{j^*}=(m_1,w')$. In this stage, a valid edge incident to $m_1$ first enters ordinal basis. Notice that
$$u_1'=c_{1,j^*}=c_{1,(m_1,w')}\in \mathcal{M}.$$
Also, the utility of each woman does not change, which completes the proof.
\end{proof}

\subsection{Convergence}\label{sec:convergence}

Recall that an ordinal pivoting is uniquely defined, once that a column entering the current ordinal basis has been selected. The cardinal pivoting rule described in Algorithm~\ref{alg:pivoting} will, in a polynomial number of iterations, lead to the convergence of Scarf's algorithm.

Recall that, at the first iteration of Scarf's algorithm, we have $u_i \in {\mathcal L}$.

\begin{algorithm}
\caption{Cardinal pivoting rule}\label{alg:cap}\label{alg:pivoting}
\begin{algorithmic}
\State Let $B$ be the current feasible basis, $D$ be the current ordinal basis with utility vector $u$, and $j_t$ the man-disliked (w.r.t.~$D$) edge that is going to enter $B$. 
\If {$u_1 \in {\cal L}$}
\State {Let $m_i$ be the separator.} 
\Else 
\State{Set $i=1$.} 
\EndIf
\If {$e_i=(v_i,v_i)$ is a candidate to leave the basis}
\State {Let $e_i$ leave the basis.}
\Else 
\State {Choose any woman-disliked valid edges (w.r.t.~$D$) that is a candidate to leave the basis as the variable that leaves the basis.}
\EndIf
\end{algorithmic}
\end{algorithm}

To prove polynomial-time convergence, we start with an auxiliary lemma. 

\begin{lemma}\label{lem:u1-in-M}
If Scarf's algorithm iteratively applies Algorithm~\ref{alg:pivoting} to perform a cardinal pivot while $u_1 \in {\cal L}$, then we obtain an ordinal basis with $u_1 \in {\cal M}$ after $O(k^2)$ steps. 
More in detail, at every step, we weakly increase the subscript of the separator (where we identify $k+1$ with $1$) and the total utility of women $~\sum_{w \in W}u_w$, and strictly increase at least one of them.
\end{lemma}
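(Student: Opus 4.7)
The plan is to show that at each iteration under the hypothesis $u_1\in\mathcal{L}$, the potential $(i,\sum_{w\in W} u_w)\in\Z^2$ evolves in a componentwise monotone way with at least one strict increase per step, and then to bound the number of iterations via the coordinate ranges. Since $u_1\in\mathcal{L}$, Proposition~\ref{prop:separator} gives a well-defined separator $m_i$ with $2\le i\le k$. By Lemma~\ref{lem:cardpivot} and the priorities encoded in Algorithm~\ref{alg:pivoting}, the leaving edge in the cardinal pivot is either (A) the loop $e_i=(m_i,m_i)$, or (B) a woman-disliked edge.

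For case (A), I would invoke Lemma~\ref{lem:mloopordpivot}: if $2\le i<k$, the ordinal pivot introduces an $m_1$-disliked valid edge incident to $m_{i+1}$ while the only loop removed from $D$ is $e_i$, so the new separator is $m_{i+1}$ and no woman's utility changes; if $i=k$, the new $u_1$ lies in $\mathcal{M}$, which exits the regime and corresponds to the separator ``wrapping'' to $m_{k+1}\equiv m_1$. In both sub-cases, the separator's subscript strictly increases while $\sum_{w\in W}u_w$ is preserved.

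For case (B), Algorithm~\ref{alg:pivoting} prefers $e_i$ whenever the cardinal pivot is in the ``two-loop path'' scenario (I) of the proof of Lemma~\ref{lem:cardpivot}, so case (B) corresponds to the ``even cycle'' scenario (II), which contains no loops; in particular the leaving edge is a valid edge. Lemma~\ref{lem:wdisordpivot} then yields that the entering edge $e_{j^*}$ is a man-disliked valid edge, that the utility of exactly one woman (namely $v_{i_\ell}$) strictly increases, and that every other woman's utility is unchanged; hence $\sum_{w\in W} u_w$ strictly increases. Since only valid edges are swapped, the multiset of loops in $D$ is preserved, and Lemma~\ref{lem:man-order} then implies that the separator remains $m_i$. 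Moreover, the hypothesis $u_1\in\mathcal{L}$ together with Lemma~\ref{unmatchedwoman} and the inclusion $B\setminus\{1\}\subset D$ forces $m_1$'s connected component of $G_B$ to consist solely of $m_1$ and its loop, so the even cycle of scenario (II) cannot touch $m_1$; this structural fact is what I would use to argue that $u_1$ remains in $\mathcal{L}$ after the corresponding ordinal pivot, so that the phase continues.

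Combining the cases, $(i,\sum_{w\in W} u_w)\in\Z^2$ is componentwise non-decreasing with at least one coordinate strictly increasing per iteration. By Lemma~\ref{lem:uvectorlemma} each $u_w\in\mathcal{S}\cup\mathcal{M}=\{0,1,\dots,k\}$, so $\sum_{w\in W} u_w$ takes $O(k^2)$ distinct values, while the separator's subscript lies in $\{2,\dots,k,k+1\}$ and takes $O(k)$ values. Therefore the total number of iterations before $u_1$ enters $\mathcal{M}$ is $O(k^2)$. The most delicate step is verifying that $u_1$ remains in $\mathcal{L}$ throughout case (B); this relies on the structural observation above that no edge incident to $m_1$ is available to be introduced into $D'$ in the corresponding ordinal pivot, and once that is settled the remainder of the argument reduces to straightforward bookkeeping on the potential vector.
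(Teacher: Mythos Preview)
Your proposal is essentially correct and mirrors the paper's approach: the same potential $(i,\sum_{w\in W}u_w)$, the same case split into (A) the loop $e_i$ leaves (handled via Lemma~\ref{lem:mloopordpivot}) and (B) a woman-disliked edge leaves (handled via Lemma~\ref{lem:wdisordpivot}), and the same $O(k^2)$ bound from the coordinate ranges. One small inaccuracy worth noting: case (B) is not confined to the even-cycle scenario (II) of Lemma~\ref{lem:cardpivot} --- it can also occur in scenario (I) when the cardinal pivot is degenerate and $e_i$ has $x$-value $1$, so that $e_i$ is not a candidate to leave --- but this does not affect your argument, since Lemma~\ref{lem:wdisordpivot} applies regardless of which cardinal-pivot scenario produced the woman-disliked leaving edge, and Algorithm~\ref{alg:pivoting} always selects a woman-disliked \emph{valid} edge, so the loop multiset in $D$ is still preserved and the separator stays at $m_i$.
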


\begin{proof}

Suppose $k\ge 2$, else the statement is trivial. By construction, $B_0=\{1,2,\dots,n\}$ and $D_0=\{3k+1,2,3,\dots,n\}$, since $c_{1,3k+1}$ is the maximum entry in the first row outside the first $n$ columns, (see Example~\ref{ex:bi-Cmatrix} for an illustration). 

We first show by induction on the number of iterations that while $u_1 \in {\cal L}$, the input to Algorithm~\ref{alg:pivoting} is well-defined, and moreover, there is always a variable candidate to leave the basis that is either of the form $e_i$ (where $m_i$ is the current separator) or a woman-disliked edge (w.r.t.~the current ordinal basis $D$). Recall that an iteration is defined as the change of both an ordinal and a feasible basis.

For the basic step, note that $u_1=c_{1,3k+1}\in \mathcal{L}$, $1\notin D$ and $2\in D$. By Lemma \ref{lem:man-order}(ii) $m_2$ is the separator. At the first iteration, we execute a cardinal pivot to let $e_{3k+1}$ enter $B_0$, which as argued above is man-disliked. Hence, the input to Algorithm~\ref{alg:pivoting} is well-defined. The second part of the statement follows from Lemma~\ref{lem:cardpivot}.

For the inductive step, let us investigate the generic iteration $g$-th iteration, with $g \in \mathbb{N}$, of the algorithm. By inductive hypothesis, the input to Algorithm~\ref{alg:pivoting} in iteration $g-1$ is well-defined, and the leaving variable is chosen to be either $e_i$ (where $m_i$ is the current separator) or a woman-disliked edge (w.r.t.~$D$). We can then apply Lemma~\ref{lem:wdisordpivot} or Lemma~\ref{lem:mloopordpivot} to conclude that the edge entering the current feasible basis $B$ is man-disliked. Since every almost-feasible basis has a separator (see Proposition~\ref{prop:separator}), the input to Algorithm~\ref{alg:pivoting} is well-defined. The second part of the statement follows from Lemma~\ref{lem:cardpivot}.

Hence, Scarf's algorithm that iteratively applies Algorithm~\ref{alg:pivoting} for choosing a cardinal pivoting rule is well-defined. Let us now argue about its convergence to an ordinal basis with $u_1 \in {\cal M}$. Note that during an ordinal pivot, either we move the separator from $i$ to $i+1$ (when $i=n$, define $i+1=1$), while all women's utility stay constant (Lemma~\ref{lem:mloopordpivot}, part 2), or the separator does not move, but a woman's utility increases, while all other stay constant (Lemma~\ref{lem:wdisordpivot}, part 3). Throughout the algorithm, the utility of a woman is contained in ${\cal S} \cup {\cal M}$. Hence, after $O(k^2)$ iterations, we must have that the separator becomes $m_1$, which implies $u_1 \in {\cal M}$ (see Lemma~\ref{lem:mloopordpivot}, part 2). 

\end{proof}

Consider the first iteration when $u_1 \notin {\cal L}$. Then $u_1 \in {\cal M}$ and, before that, no valid edge incident to $m_1$ occurs in the intermediate feasible basis $B$ (since $u_1 \in {\cal L}$ throuand wghout the first part of the algorithm), hence $m_1$ has never been matched in any matching corresponding to the feasible basis visited by Scarf's algorithm. By Lemma~\ref{lem:u1-in-M}, in the next iteration we start with $u_1\in\cal{M}$. 

Now assume $u_1 \in {\cal M}$, and define $m_1$ to be the separator. We continue with a cardinal pivot to let some $m_1$-disliked edge enter our basis. Using Algorithm~\ref{alg:pivoting}, we can repeat arguments similar to Lemma \ref{lem:man-order}, Lemma \ref{lem:cardpivot}, Lemma~\ref{lem:wdisordpivot} and Lemma \ref{lem:mloopordpivot}, to conclude  the following. Details are given in Appendix~\ref{sec:app:proof-ordinpivot}.

\begin{lemma}\label{lem:m1lemma}
Consider an iteration of Scarf's algorithm that followed Algorithm~\ref{alg:pivoting} until $u_1 \in {\cal M}$. Let $D$ be the current almost-feasible ordinal basis assume we have $u_1\in \mathcal{M}$. Then 
\item[(i)] $1,2,\dots,k\notin D$.
\item[(ii)] The unique $m_1$-disliked edge in $D$ corresponds to $(m_1,w)$ for some $w\in W$.
\item[(iii)] In any cardinal pivot, suppose $B$ is our current feasible basis and $D=B\cup\{j_t\}\setminus\{1\}$ is the associated ordinal basis. If $e_{j_t}$ is a man-disliked valid edge, then we can find either the loop $e_1$ or some woman-disliked edge $e_{j_\ell}$ to leave $B$, and we obtain $B'=B\cup\{j_t\}\setminus\{j_\ell\}$ as a new feasible basis.
\item[(iv)] If the leaving edge $e_{j_\ell}$ is woman-disliked, then we do not terminate. In the following ordinal pivot, let $e_{j_r}$ be the reference edge. Then $v_{i_\ell}$ is a woman and $v_{i_r}$ is a man. A new man-disliked valid edge $e_{j^*}$ enters $D$. Moreover, denote $u'$ as the utility vector of the new ordinal basis $D'$, then
$$u_{i_\ell}'>u_{i_\ell}, \quad 
u_{i_r}'<u_{i_r}, \quad 
u_{i}'=u_{i}\textrm{ for $i\neq i_\ell,i_r$}.$$
In particular, $\sum_{w \in W}u_w$ strictly increases. \item[(v)] If the leaving edge is the loop $e_1$, then column $1$ leaves $B$, then we obtain $B'=D$, which terminates the algorithm.
\end{lemma}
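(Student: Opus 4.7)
The plan is to prove parts (i)--(v) simultaneously by induction on the iterations executed after $u_1$ first enters $\mathcal{M}$, mirroring the arguments developed for the $u_1\in\mathcal{L}$ phase but with $m_1$ itself playing the role that the separator $m_i$ played there and with loop $e_1$ playing the role of $e_i$. The base case follows from Lemma~\ref{lem:mloopordpivot}(ii): when Scarf's algorithm transitions into $u_1\in\mathcal{M}$, the ordinal pivot removes the loop $(m_k,m_k)$ (the separator was $m_k$) and none of $(m_2,m_2),\dots,(m_{k-1},m_{k-1})$ belonged to $D$ by Proposition~\ref{prop:separator}, so $1,2,\dots,k\notin D$, giving (i); the newly introduced $m_1$-disliked column is a valid edge incident to $m_1$, giving (ii).

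For the inductive step, (ii) follows from a rightmost-column argument analogous to Lemma~\ref{lem:man-order}(ii): any edge farther to the right in $C$ than the $m_1$-disliked edge would be incident to $m_1$ (by the structure of $C$ in Section~\ref{sec:matrixdesign}) or its column would violate $c_{j}\le u$, so the $m_1$-disliked column is of the form $(m_1,w)$. Part (iii) is the analog of Lemma~\ref{lem:cardpivot}. When the man-disliked valid edge $e_{j_t}$ enters $B$, either it closes an even cycle $Q$ in a tree of $G_B$ or joins two trees into one containing two loops. In the first case, the argument of Claim~\ref{cl:every-other-w-disliked} adapts directly: along $Q$ the disliked assignments alternate between men and women (with $m_1$ possibly taking the place of the old separator), producing a woman-disliked candidate to leave. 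In the second case, using (i), the only men's loop available in the current basis $B$ is $e_1$, and since the second loop on the created tree belongs to $D\subseteq B$ and must therefore be a woman's loop, the path $P$ connecting the two loops necessarily has $e_1$ as one endpoint. The disliked-alternation structure of $P$ (copying the proof of Claim~\ref{cl:P-starts-with-loop} with $m_1$ as the starting man) then yields either $e_1$ or a woman-disliked edge as a valid leaving column, whether the pivot is degenerate or not.

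For (iv), I copy Lemma~\ref{lem:wdisordpivot}: if $e_{j_\ell}$ is woman-disliked then $v_{i_\ell}\in W$; Corollary~\ref{referlemma} gives that the reference edge $e_{j_r}$ is valid, and since $v_{i_\ell}$ does not dislike it, Corollary~\ref{obs:almost-feasible-v-i} forces $v_{i_r}$ to be a man. If the entering $e_{j^*}$ were a loop, it would be a man's loop $(v_{i_r},v_{i_r})$, contradicting (i) for $D'$; so $e_{j^*}$ is a valid man-disliked edge. Lemma~\ref{lem:utilitychange} then gives $u'_{i_\ell}>u_{i_\ell}$, $u'_{i_r}<u_{i_r}$, and all other utilities unchanged, so $\sum_{w\in W}u_w$ strictly increases and (i), (ii) are preserved, closing the induction. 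For (v), if the leaving edge is $e_1$ then column $1$ exits $B$, and Lemma~\ref{lem:scarf-halts-2} implies $B'=D$ and the algorithm terminates. The main obstacle is in (iii): I must rule out the degenerate possibility that the path $P$ connecting the two loops lies entirely within women and loops of $D$, i.e.\ avoids $e_1$ altogether. Using that $B\setminus\{1\}\subseteq D$, that $e_1\in B$, and that $D$ contains no men's loops by (i), a counting argument on the components of $G_B$ forces $e_1$ to lie on the path $P$, closing this gap.
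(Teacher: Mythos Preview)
Your overall plan---induction on iterations with $m_1$ playing the former role of the separator---matches the paper, and your treatment of (i), (ii), (iii), (v) is essentially the paper's argument, albeit compressed. The ``counting argument'' you allude to for forcing $e_1$ onto $P$ is exactly what the paper proves as Claim~\ref{cl:Tispivoting}: if $m_1$ lay outside the component $\Gamma$ created by adding $e_{j_t}$, then every edge of $\Gamma$ would lie in $E_D$ and, by (ii), be disliked by one of its own endpoints, contradicting that $\Gamma$ has one more edge than node.

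There is, however, a genuine gap in your proof of (iv). You write: ``If the entering $e_{j^*}$ were a loop, it would be a man's loop $(v_{i_r},v_{i_r})$, contradicting (i) for $D'$.'' This is circular. Statement (i) for $D'$ is precisely the assertion that no man's loop (in particular $j^*$) belongs to $D'$; you have not yet established it, so you cannot invoke it to rule out $e_{j^*}$ being a loop. Since $D'=D\setminus\{j_\ell\}\cup\{j^*\}$ and (i) holds for $D$, the claim ``(i) holds for $D'$'' is equivalent to ``$j^*\notin\{1,\dots,k\}$,'' which is exactly what you are trying to prove.

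The paper closes this gap via an auxiliary fact you omit: one must show that some woman $\bar w$ remains unmatched (i.e.\ $(\bar w,\bar w)\in E_{B'}\cap E_{D'}$) throughout the $u_1\in\mathcal{M}$ phase. This is Claim~\ref{cl:unmatchedwoman}, and it requires arguing that $x_{(m_1,m_1)}=1$ persists under Algorithm~\ref{alg:pivoting} until termination---a nontrivial consequence of the pivoting rule, not of the basis structure alone. With $\bar w$ in hand, if $e_{j^*}=(v_{i_r},v_{i_r})$ were a man's loop with $v_{i_r}\neq m_1$, then the valid edge $(v_{i_r},\bar w)$ would either lie in $E_{D'}$ yet be disliked by no one (since $v_{i_r}$, $\bar w$ dislike their loops and $m_1$ dislikes an edge incident to itself), or lie outside $E_{D'}$ yet satisfy $c_{(v_{i_r},\bar w)}>u'$, both contradictions. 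The case $v_{i_r}=m_1$ must also be excluded separately (it would force termination with $\mu_{B'}$ unstable). You need to supply these arguments; appealing to (i) for $D'$ does not suffice.
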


These principles will indicate us to continue our anti-cycling pivots and end with (v) after $O(k^2)$ steps. Combining Lemma~\ref{lem:u1-in-M} with Lemma~\ref{lem:m1lemma} and bounding, in the case $u_1 \in {\cal M}$, the number of steps in a similar fashion as it was done when $u_1 \in {\cal L}$, we can bound the total running time of the algorithm.

\begin{theorem}\label{Thm:bipartitepolynomial}
If Scarf's algorithm iteratively applies Algorithm~\ref{alg:pivoting} to perform a cardinal pivot, then it converges after $O(n^2)$ steps. 

\end{theorem}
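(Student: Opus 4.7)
The plan is to combine Lemma~\ref{lem:u1-in-M} and Lemma~\ref{lem:m1lemma} and bound the number of iterations in each of the two natural phases of the algorithm: Phase~I, during which $u_1 \in \mathcal{L}$, and Phase~II, during which $u_1 \in \mathcal{M}$. Within each phase, the proof will rely on the monotone evolution of the potential vector $(i, \sum_{w \in W} u_w) \in \Z^2$ announced in~\eqref{eq:app-convergence-bi}, where $i$ is the index of the current separator.

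For Phase~I, Lemma~\ref{lem:u1-in-M} already gives a direct bound. At each iteration, either the separator index strictly increases (by Lemma~\ref{lem:mloopordpivot}, case~2(i), which moves it from $m_i$ to $m_{i+1}$ while every woman's utility stays constant), or the separator does not change but some woman's utility strictly increases while the others stay constant (by Lemma~\ref{lem:wdisordpivot}, part~3). Hence the potential vector evolves strictly componentwise monotonically. Since the separator index ranges over $\{2, \dots, k\}$ and each $u_w$ lies in $\mathcal{S}\cup\mathcal{M} = \{0,1,\dots,k\}$, the sum $\sum_{w\in W} u_w$ is bounded by $k^2$, so Phase~I lasts at most $k\cdot k^2 = O(k^3)$ iterations in the worst case; a sharper count using the two-dimensional potential gives $O(k^2)$ since between any two separator changes the total utility of women strictly increases and is bounded by $k^2$.

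For Phase~II, by Lemma~\ref{lem:m1lemma}(i)--(ii), once $u_1\in\mathcal{M}$ we have $1,\dots,k\notin D$ and the $m_1$-disliked edge is incident to $m_1$. By part~(iii), Algorithm~\ref{alg:pivoting} is still well-defined, and part~(iv) shows that whenever a woman-disliked edge leaves, we obtain a new ordinal basis in which a new man-disliked valid edge enters, $\sum_{w\in W} u_w$ strictly increases by at least one unit, and no woman's utility decreases. Since each woman's utility is bounded above by $k$, Phase~II can last at most $k^2$ iterations before the leaving edge must be the loop $e_1$, which by part~(v) triggers termination. Combining the two phases yields a total of $O(k^2) = O(n^2)$ iterations, as claimed.

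The one step that requires a little care is verifying that Algorithm~\ref{alg:pivoting} remains applicable throughout the execution, i.e., the inductive argument already used in Lemma~\ref{lem:u1-in-M} must be extended across the transition from Phase~I to Phase~II. Specifically, when Phase~I ends, the last ordinal pivot falls under Lemma~\ref{lem:mloopordpivot}, case~2(ii), so the entering edge $e_{j^*}$ is $m_1$-disliked and valid; the hypotheses of Lemma~\ref{lem:m1lemma} are then satisfied at the beginning of Phase~II and the induction continues. Finally, each individual iteration consists of identifying disliked edges, running a cardinal pivot (a single tree/cycle update on $G_B$ following Lemma~\ref{lem:cardpivot}), and performing the uniquely-determined ordinal pivot, all of which can be implemented in polynomial time. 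This completes the bound of $O(n^2)$ iterations and establishes polynomial-time convergence.
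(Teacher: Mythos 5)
Your proof is correct and follows essentially the same route as the paper: Lemma~\ref{lem:u1-in-M} bounds the phase with $u_1\in\mathcal{L}$ via the lexicographically increasing potential $(i,\sum_{w\in W}u_w)$, and Lemma~\ref{lem:m1lemma} handles the phase with $u_1\in\mathcal{M}$ by the strict increase of $\sum_{w\in W}u_w\le k^2$ until termination via part~(v). Your additional care about the phase transition and per-iteration cost is consistent with the paper's (terser) argument.
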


\section{Convergence Through a Perturbation of the Bipartite Matching Polytope}\label{sec:perturbation}
It is common in literature to run Scarf's algorithm preceded by a perturbation, so that the resulting polytope is non-degenerate~\cite{biro2016fractional,biro2016matching}. Recall that, in this case, the behavior of Scarf's algorithm is univocally defined, see Section~\ref{sec:scarf-lemma}. In this section, we give another perspective on our pivot rule connecting it to the perturbation approach.

Notice that the bipartite matching polytope given in form (\ref{eq:Ax-leq-b}) is highly degenerate. For $A,b$ as given in Section~\ref{sec:matrixdesign}, we define a non-degenerate polytope
\begin{equation}\label{eq:nondegeneratepoly}
    \{x\in\R^m_{\ge 0}:Ax=b+b(\epsilon)\},
\end{equation}
where $b(\epsilon)\in\Q^n_{\ge 0}$ is a parameterized vector defined by
$$b(\epsilon):=(\underbrace{\epsilon^{k+1},\epsilon^{k+2},\dots,\epsilon^{2k}}_{\hbox{men}},\underbrace{\epsilon,\epsilon^2,\dots,\epsilon^k}_{\hbox{women}})^T.$$

\begin{observation}\label{lemma:epsilon-woman-man}
In~\eqref{eq:nondegeneratepoly}, we have that $\sum_{m\in M}b_m(\epsilon)<b_w(\epsilon)$ for every woman $w$. In particular, the right-hand side of each constraint corresponding to a woman is strictly larger than the right-hand side of each constraint corresponding to a man.
\end{observation}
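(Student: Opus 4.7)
The proof will be a direct arithmetic verification that holds for any sufficiently small $\epsilon>0$ (as is customary in perturbation arguments of this flavour). The plan is to first compute $\sum_{m\in M}b_m(\epsilon)$ explicitly from the definition of $b(\epsilon)$. By construction the men's entries of $b(\epsilon)$ are $\epsilon^{k+1},\epsilon^{k+2},\dots,\epsilon^{2k}$, so
$$\sum_{m\in M}b_m(\epsilon)=\sum_{i=1}^{k}\epsilon^{k+i}=\epsilon^{k+1}\cdot\frac{1-\epsilon^{k}}{1-\epsilon}<\frac{\epsilon^{k+1}}{1-\epsilon}$$
for every $\epsilon\in(0,1)$, where the last bound uses the geometric series formula.

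Next, the women's entries of $b(\epsilon)$ are $\epsilon^{1},\epsilon^{2},\dots,\epsilon^{k}$, whose minimum (for $\epsilon\in(0,1)$) is $\epsilon^{k}$. Hence, to prove $\sum_{m\in M}b_m(\epsilon)<b_{w}(\epsilon)$ for every $w\in W$ it suffices to verify $\epsilon^{k+1}/(1-\epsilon)<\epsilon^{k}$, which reduces to $\epsilon<1-\epsilon$, i.e.\ $\epsilon<1/2$. Any $\epsilon$ in $(0,1/2)$ thus makes the first claim true.

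The ``in particular'' statement is then immediate: since each $b_m(\epsilon)>0$, we have $b_m(\epsilon)\le\sum_{m'\in M}b_{m'}(\epsilon)<b_w(\epsilon)$ for every $m\in M$ and every $w\in W$; adding $1$ to both sides recovers the right-hand sides of the corresponding constraints in~\eqref{eq:nondegeneratepoly}, yielding $1+b_m(\epsilon)<1+b_w(\epsilon)$. There is no real obstacle in this proof; the exponents in $b(\epsilon)$ were deliberately chosen so that, as $\epsilon\to 0$, every woman's term dominates the entire sum of the men's terms, and the observation simply records this convenient feature of the chosen perturbation for use in later arguments.
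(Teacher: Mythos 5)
Your proof is correct, and it is essentially the only natural argument: the paper states this as an Observation without proof, and your direct computation (bounding the men's sum by the geometric series $\epsilon^{k+1}/(1-\epsilon)$ and comparing to the smallest woman's term $\epsilon^k$) verifies it for any $\epsilon<1/2$, which is consistent with the paper's later choice $\epsilon=\min\{\tfrac{1}{2n+1},\epsilon^*\}$. The explicit threshold $\epsilon<1/2$ is a small bonus of precision over the paper's implicit ``$\epsilon$ sufficiently small.''
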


The following is a classical fact of linear algebra, see, e.g.,~\cite[Exercise 3.15]{bertsimas1997introduction}.

\begin{lemma}\label{obs:positivebasis}
There exists some $\epsilon^*>0$, such that for any fixed $\epsilon$ with $0<\epsilon<\epsilon^*$
\begin{itemize}
    \item[(i)]
All basic feasible solutions to the polytope defined in (\ref{eq:nondegeneratepoly}) are nondegenerate. That is, every basic feasible solution $x$ of (\ref{eq:nondegeneratepoly}) has exactly $n$ strictly positive entries. 
\item[(ii)] Every basis that is feasible for~\eqref{eq:nondegeneratepoly} is feasible for the original polytope.
\end{itemize} 
\end{lemma}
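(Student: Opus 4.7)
The plan is to exploit the crucial feature of $b(\epsilon)$, namely that its coordinates are pairwise distinct powers of $\epsilon$, to recast everything as a polynomial-in-$\epsilon$ argument. For each basis $B$ of $A$, the associated basic solution at parameter $\epsilon$ is $x_B(\epsilon) := A_B^{-1}(b+b(\epsilon))$, and its $j$-th coordinate has the explicit form
\[
p_j(\epsilon) \;=\; (A_B^{-1}b)_j + \sum_{i=1}^{n} (A_B^{-1})_{ji}\,\epsilon^{\tau(i)},
\]
where $\tau$ is the permutation of $[n]$ encoding which power of $\epsilon$ sits in which coordinate of $b(\epsilon)$. So $p_j$ is a polynomial in $\epsilon$ of degree at most $n$.

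The first key step is to show that no $p_j$ is identically zero. Because the exponents $\tau(1),\dots,\tau(n)$ are pairwise distinct, the coefficients of the $n$ nonconstant monomials of $p_j$ are exactly the entries of the $j$-th row of $A_B^{-1}$. If they all vanished, the $j$-th row of $A_B^{-1}$ would be zero, contradicting invertibility of $A_B^{-1}$. Hence each $p_j$ is a nonzero polynomial and therefore has only finitely many positive roots.

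Ranging over the (finitely many) bases $B$ and the coordinates $j\in[n]$, let $\epsilon^*$ be strictly smaller than every positive root of every such $p_j$. Then for $0<\epsilon<\epsilon^*$, $p_j(\epsilon)\neq 0$ for all $B,j$. In particular, at any basic feasible solution, all $n$ basic coordinates are strictly positive, which is the definition of nondegeneracy; this gives (i). For (ii), suppose $B$ is feasible for~\eqref{eq:nondegeneratepoly}, so $p_j(\epsilon)\geq 0$ for every $j$. If $(A_B^{-1}b)_j<0$ for some $j$, then since $p_j(\epsilon)=(A_B^{-1}b)_j+O(\epsilon)$, continuity at $\epsilon=0$ forces $p_j(\epsilon)<0$ for all $\epsilon$ positive and sufficiently small, contradicting feasibility in the perturbed system. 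Shrinking $\epsilon^*$ once more (still finitely many inequalities), we conclude $A_B^{-1}b\geq 0$, i.e., $B$ is feasible for the original polytope.

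The only non-routine step is verifying that each $p_j$ is a nonzero polynomial, and this is precisely what the choice of $b(\epsilon)$ is designed to deliver: assigning \emph{distinct} powers of $\epsilon$ to the $n$ constraints prevents different rows from producing proportional contributions, so any collapse $p_j\equiv 0$ would contradict invertibility of $A_B^{-1}$. Everything else—bounding the number of bases, finiteness of roots, and the continuity argument for (ii)—is standard linear-algebraic bookkeeping.
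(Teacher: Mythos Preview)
Your proof is correct. The paper does not actually prove this lemma; it simply cites \cite[Exercise~3.15]{bertsimas1997introduction} and states it as a classical linear-algebra fact. Your argument is essentially the standard lexicographic-perturbation proof from that reference: the distinct powers of $\epsilon$ in $b(\epsilon)$ ensure each basic coordinate $p_j(\epsilon)$ is a nonzero polynomial (since its nonconstant coefficients form a row of $A_B^{-1}$), and the rest follows by finiteness of roots and continuity at $\epsilon=0$. There is nothing to compare beyond noting that you have supplied the textbook details the paper omits.
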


For any fixed $b(\epsilon)$ that makes the polytope nondegenerate, the iterations of Scarf's algorithm generate a sequence of Scarf pairs
\begin{equation}\label{eq:iterations}
    (B_0,D_0)\to(B_1,D_1)\to\dots\to(B_I,D_I).
\end{equation}
Recall that such that each pair contains two $n$-sets with $|B_I\cap D_I|=n-1$ for $I<N$ and $B_N=D_N$.\par
Our pivot rule (Algorithm~\ref{alg:pivoting}) can be captured by a specific perturbation:
\begin{theorem}\label{thm:perturbation}
There exists $\epsilon>0$ such that an execution of Scarf's algorithm on the original bipartite matching polytope with pivot rule given by Algorithm~\ref{alg:pivoting} gives the same sequence~\eqref{eq:iterations} as running it on its perturbation~\eqref{eq:nondegeneratepoly}. 
\end{theorem}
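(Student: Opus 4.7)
I would prove Theorem~\ref{thm:perturbation} by induction on the iteration index $I$, with invariant that the $I$-th Scarf pair visited by Scarf's algorithm on the perturbation~\eqref{eq:nondegeneratepoly} coincides with the one visited on the original polytope under Algorithm~\ref{alg:pivoting}. The base case is immediate: both algorithms start at $B_0=\{1,\ldots,n\}$ (which is feasible for both polytopes because $A_{B_0}=I$ gives strictly positive values against either $b$ or $b+b(\epsilon)$) and at the same ordinal basis $D_0$ (which depends only on $C$). For the inductive step, assume both algorithms sit at the same pair $(B,D)$. Ordinal pivots depend only on $D$ and $C$ by Lemma~\ref{lem:appoplemma}, so both algorithms select the same entering column $j_t$ for the next cardinal pivot. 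It thus suffices to show that the unique leaving column forced by the minimum-ratio test on the perturbation coincides with Algorithm~\ref{alg:pivoting}'s choice.

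For this cardinal-pivot analysis, observe that the direction vector $\lambda = A_B^{-1} a_{j_t}$ is supported on either the path $P$ connecting the two loops (case I of Lemma~\ref{lem:cardpivot}) or the even cycle $Q$ (case II), with entries in $\{-1,+1\}$; the leaving candidates are exactly those columns with $\lambda>0$, and since $|\lambda_e|=1$ for these the minimum-ratio test simply picks the candidate minimizing $x^\epsilon_e$. In case II, by Claim~\ref{cl:every-other-w-disliked} the candidates with $\lambda>0$ on $Q$ are precisely the woman-disliked edges (since $e_{j_t}$ is man-disliked and the sign of $\lambda$ alternates around $Q$), so the winner is automatically woman-disliked and matches Algorithm~\ref{alg:pivoting}. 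In case I, Claim~\ref{cl:P-starts-with-loop} places the separator loop $e_i$ on $P$ as a candidate alongside the other-endpoint loop $(\bar w,\bar w)$ and certain alternating valid edges; one must show that the minimum $x^\epsilon_e$ over these candidates is attained at $e_i$.

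The key computation is to back-propagate the equalities $\sum_{e\ni v} x^\epsilon_e = 1 + b_v(\epsilon)$ from the leaves of each tree of $G_B$ toward its loop, so that each $x^\epsilon_e$ becomes an integer plus a signed sum of $b_v(\epsilon)$ contributions coming from a specific subtree. Because the women's perturbations $b_{w_j}(\epsilon)=\epsilon^{j}$ are strictly larger than the men's $b_{m_i}(\epsilon)=\epsilon^{k+i}$ when $\epsilon<1$ (since $j\leq k < k+i$), and since all exponents $1,\ldots,2k$ in $b(\epsilon)$ are distinct so that the signed sum defining each $x^\epsilon_e$ has an unambiguous minimum-order monomial, a direct comparison of leading-order terms along $P$ shows that $x^\epsilon_{e_i}$ carries the smallest leading value, forcing the min-ratio test to select $e_i$. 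The main obstacle is precisely this leading-order bookkeeping through possibly large trees, in particular tracking the parity-dependent signs of the signed sum and identifying the ``deepest'' woman-indexed leaf in each subtree; once that combinatorial identification is in place, case~I follows by direct comparison of leading-order exponents and the induction closes.
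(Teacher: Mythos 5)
Your overall architecture (induction on Scarf pairs, reducing the cardinal pivot to a comparison of $x^\epsilon$-values among the decreasing variables on $P$ or $Q$ because all such variables decrease at unit rate, and then controlling $x^\epsilon$ via signed sums of the perturbations propagated from the leaves) is the same as the paper's, and your treatment of case~II is correct. The gap is in case~I: you propose to show that the minimum of $x^\epsilon_e$ over the decreasing candidates on $P$ is \emph{always} attained at the separator loop $e_i$, but this is false, and Algorithm~\ref{alg:pivoting} does not require it. Case~I splits according to the tree $T_2$ at the far end of $P$. If $T_2$ is a singleton woman (her loop has $x$-value $1$), then $P$ is $x$-augmenting, $e_i$ is genuinely a candidate to leave in the unperturbed problem, and the monotonicity of the $x$-value-$1$ edges along $P$ indeed forces the perturbed min-ratio test to pick $e_i$ — this is the only sub-case where your claim holds. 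But if $T_2$ is a nontrivial tree rooted at a woman, her loop has $x$-value $0$, the pivot is degenerate in the original polytope, $e_i$ is \emph{not} a candidate to leave there (its value stays at $1$), and in the perturbation one has $x^\epsilon_{(m_i,m_i)}>\tfrac12\ge x^\epsilon_{(\bar w,\bar w)}$, so the minimum is attained at a woman-disliked edge, not at $e_i$. What you must verify in that sub-case is that the perturbed choice lands in the ``else'' branch of Algorithm~\ref{alg:pivoting} (any woman-disliked candidate), which it does; trying to prove the minimum sits at $e_i$ would fail. So the case distinction you are missing is exactly the distinction between an $x$-augmenting $P$ and a degenerate pivot.

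Two smaller points. First, your leading-monomial bookkeeping is workable but heavier than needed: since every single woman's perturbation exceeds the sum of \emph{all} men's perturbations, the comparison of $x^\epsilon$-values along a root-to-leaf path reduces to counting the women in the relevant subtrees, with no need to identify a ``deepest woman-indexed leaf'' (indeed all leaves are men). Second, you should also record that for $\epsilon$ small every basis feasible for the perturbed polytope is feasible for the original one, so that the perturbed trajectory even consists of Scarf pairs of the original instance, and you should note that the argument has to be repeated for the phase of the algorithm in which $u_1\in\mathcal{M}$.
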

\begin{proof}

Fix $\epsilon=\min\{\frac{1}{2n+1},\epsilon^*\}$, where $\epsilon^*$ makes Lemma~\ref{obs:positivebasis} valid. 
Then (\ref{eq:nondegeneratepoly}) is nondegenerate.\par
Let $B$ be a feasible basis of~\eqref{eq:nondegeneratepoly}, corresponding to vertex $x^\epsilon$. By Lemma~\ref{obs:positivebasis}, $B$ is also feasible for the original bipartite matching polytope, and let $x$ be the basic feasible solution corresponding to it. If $j\in B$,
$$|(x^\epsilon-x)_j|
 =|(A_B^{-1}b(\epsilon))_j| =\left|\sum_{i=1}^n(A_B^{-1})_{ji}b_i(\epsilon)\right| \leq\sum_{i=1}^n|(A_B^{-1})_{ji}||b_i(\epsilon)| 
\leq\frac{1}{2n+1}\sum_{i=1}^n|(A_B^{-1})_{ji}|< \frac{1}{2},
$$where the last inequality uses the fact that $A$ is totally unimodular. \par
Combining the inequality above and the fact that $x_j=x^\epsilon_j=0$ for $j\notin B$, we have
\begin{claim}\label{cl:half}
For any $j\in[n]$, $x_j=1$ if and only if $x^\epsilon_j>\frac{1}{2}$.
\end{claim}
Notice that Lemma~\ref{lem:forest} is independent of the right-hand side, thus the graph representation $G_B$ of polytope (\ref{eq:nondegeneratepoly}) still has the forest with single loops structure. Now consider a tree $T$ with single loop from the forest. We define the root of $T$ as the node $r$ such that its loop belongs to $E_B$. Then $r$ is uniquely defined. 

We now investigate properties of the vector $x$ restricted to $T$. For any node $v$, recall that $\deg_T(v)$ is defined as the number of edges (including loops) incident to $v$ in $T$.  We call a node $v$ of $T$ a \emph{leaf} if $v\neq r$ and $\deg_T(v)=1$. $x_e$ for each edge $e$ incident to a leave of $T$ is uniquely defined, and we can then inductively define the other values of $x_e$ for each edge $e$ of $T$. 

Our perturbation imply the following property of leaves.
\begin{claim}\label{cl:leafisman}
If $v$ is a leaf of $T$, then $v$ is a man.
\end{claim}
\noindent \emph{\underline{Proof of Claim~\ref{cl:leafisman}}}. 
If a woman $w$ has degree $1$ in $T$ and $(w,w)\notin E_B$, then consider the only edge $(m,w)\in E_B$. We have that $x^\epsilon_{(m,w)}=1+b_w(\epsilon)\ge 1+\epsilon^{k}>1+b_m(\epsilon)$ by Observation~\ref{lemma:epsilon-woman-man}, which contradicts the feasibility of $B$ since we require that any edge in $E_B$ incident to $m$ has nonnegative $x^\epsilon$-value.\hfill $\small \blacksquare$\par

\begin{claim}\label{cl:womandeg2}
For any woman $w$, we have $\deg_T(w)=1$ or $\deg_T(w)=2$.
\end{claim}
\noindent \emph{\underline{Proof of Claim~\ref{cl:womandeg2}}}. 
If $T$ only contains $w$, then we have $\deg_T(w)=1$ since the loop $(w,w)$ is the only edge on $T$. For the rest of the proof, we assume $T$ has at least two nodes.\par
  So assume that $T$ contains at least two nodes. If $w$ is not a root, then from Claim~\ref{cl:leafisman}, we know $\deg_T(w)\neq 1$. Suppose $\deg_T(w)\ge 3$. Since~\eqref{eq:nondegeneratepoly} is integral when $b(\epsilon)$ is the $0$ vector, there exists at least two distinct men $m_a,m_b$ such that $e_j=(m_a,w)\in E_T$, $e_k=(m_b,w)\in E_T$, and $x_j=0$, $x_k=0$. For $m_a$, there exists another edge $e_{j'}$ incident to $m_a$ such that $x_{j'}=1$ (again, using the integrality of the polytope). The other endpoint of $e_{j'}=(m_a,w_{a})$ is a woman $w_{a}$ but cannot be a leaf by Claim~\ref{cl:leafisman}, which implies there exists at least one man $m_{a'}$ such that $e_{j''}=(m_{a'},w_{a})\in E_T$ with $x_{j''}=0$ (again using integrality). Continue extending this path, we have a sequence $(w,m_a,w_a,m_{a'},\dots)$ with every edge consisting of two consecutive nodes presenting in $T$. This sequence will not end unless it contains the root. Notice that the same argument holds for the sequence $(w,m_b,w_b,m_{b'},\dots)$ defined in the same way. Sincewhich is a one root in $T$ and $T$ has no cycles, we must have $m_a=m_b$, a contradiction.
  
  If $w$ is the root, then $\deg_T(w)\neq 1$ since $T$ is not a singleton. Suppose $\deg_T(w)\ge 3$. Then there exists a man $m_a$ such that $e_j=(m_a,w)\in E_T$ and $x_j=0$ (again by integrality). Similarly as before, we can continue this path with another edge $e_{j'}=(m_a,w_a)$ such that $x_{j'}=1$, and so on. We obtain a sequence $(w,m_a,w_a,\dots)$. Since $T$ is finite, we must end the sequence with a root, but since there is only one root $w$ on $T$ and $T$ has no cycles, we obtain a contradiction.\hfill $\small \blacksquare$\par
\medskip

Using Claim~\ref{cl:womandeg2} and Claim~\ref{cl:leafisman}, we conclude that the vector $x$ restricted to $T$ has a very regular structure, discussed in the next claim.

\begin{claim}\label{cl:xalternating}
For any tree $T$ from the forest with single loops structure of $G_B$, we have that:
\begin{enumerate}
    \item\label{it:cl:xalternating-1} Any path on $T$ starting from the root $r$ is $x$-alternating.
    \item\label{it:cl:xalternating-2} If the root is a man $m$, then the loop has $x_{(m,m)}=1$.
    \item\label{it:cl:xalternating-3} If the root is a woman $w$, then the loop has $x_{(w,w)}=1$ if $T$ only has one node $w$. Else if $T$ has at least two nodes, then $x_{(w,w)}=0$.
\end{enumerate}
\end{claim}
\noindent \emph{\underline{Proof of Claim~\ref{cl:xalternating}}}. 

1. If $T$ only contains one node, then by the definition in Section~\ref{sec:notation} it is clearly $x$-alternating.\par Suppose $T$ has at least two nodes. Fix a path $P$ in $T$ such that $r\in V_P$. Notice that by Claim~\ref{cl:womandeg2}, a woman cannot be incident to two edges $e,e'$ with $x_e=x_{e'}=0$ because of feasibility. Hence, if along the path $P$ we have two consecutive $0$'s, i.e., $e,e'\in E_P$, $e\cap e'\neq \empty$ and $x_e=x_{e'}=0$ then $e\cap e'=m$ for some man. By feasibility, we can find a third edge $\tilde{e}$ such that $m\in \tilde{e}$ and $x_{\tilde{e}}=1$. However, we can start from $\tilde{e}$ and find another $x$-alternating path $\tilde{P}$ starting at $m$ and edge-disjoint from $P$ because, by feasibility, we can always find a successor of a man with an incoming $0$-edge, and, by degree count, find a successor of a woman. Similar to the proof of Claim~\ref{cl:womandeg2}, the path $\tilde{P}$ must end at the root, which is a contradiction since $E_P\cap E_{\tilde{P}}=\emptyset$. This shows~\ref{it:cl:xalternating-1}.

\ref{it:cl:xalternating-2} and~\ref{it:cl:xalternating-3}. Consider the following parity argument: When $T$ is not a singleton, then there must exist a leaf, we $m$ by Claim~\ref{cl:womandeg2}. $m$ creates an $x$-alternating path starting from $e$ with $m\in e$ and $x_e=1$, which ends at the root. Then 2,3 hold. 
\hfill $\small \blacksquare$\par
\medskip

By Claim~\ref{cl:xalternating}, $T$ must be one of the following four types, presented in Figure~\ref{fig:perturbation-example}: 
\begin{enumerate}
    \item[(a)] A tree rooted at a man $m$, with at least two nodes. $x_{(m,m)}=1$ and every path from the root to a leaf is $x$-alternating.
    \item[(b)] A tree rooted at a woman $w$, with at least two nodes. $x_{(w,w)}=0$ and every path from the root to a leaf is $x$-alternating.
    \item[(c)] A singleton man $m$ with $x_{(m,m)}=1$.
    \item[(d)] A singleton woman $w$ with $x_{(w,w)}=1$.
\end{enumerate}

\begin{figure}
    \centering
    \includegraphics{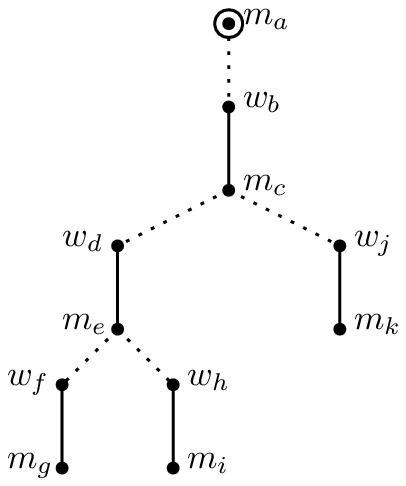}\hspace{1.5cm}
    \includegraphics{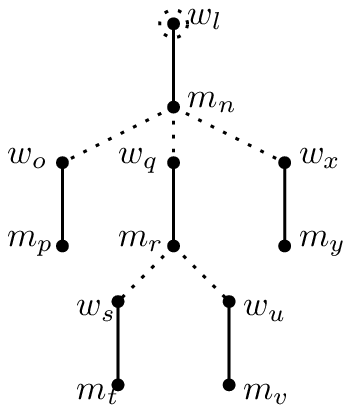}\hspace{1.5cm}
    \includegraphics{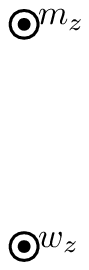}
    \caption{An illustration of four types of $T$. They are corresponding to type (a),(b),(c)(d) from left to right. The solid edges (resp.~dotted edges) are associated to $x$-value $1$ (resp.~$0$). Notice that every path from the root to the leaf is $x$-alternating.}
    \label{fig:perturbation-example}
\end{figure}

We now investigate $x^\epsilon$. Our key observation is that, for the $T$ of type (a) and type (b), the $x^\epsilon$-value has a monotonic property:
\begin{claim}\label{cl:xepsilon}
Let $T$ be a tree of $G_B$ of type (a) or type (b). Fix any path $P=r,f_0,r,f_1,v_1,f_2,\dots,f_p,v_p$ from the root $r$ to a leaf, where $f_0=(r,r)$ is the loop, $f_i=(v_{i-1},v_i)$ for $i\in [p]$, and $v_p$ is a man by Claim~\ref{cl:leafisman}. Then we have for $0\le i<j\le p$:
\begin{enumerate}
    \item\label{it:cl:xepsilon-0} If $x_{f_i}=x_{f_j}=0$, then $\frac{1}{2}\ge x^\epsilon_{f_i}>x^\epsilon_{f_j}>0$.
    \item\label{it:cl:xepsilon-1} If $x_{f_i}=x_{f_j}=1$, then $\frac{1}{2}<x^\epsilon_{f_i}<x^\epsilon_{f_j}$. Moreover, $x^\epsilon_{f_j}>1$ if and only if $j=p$.
\end{enumerate}
\end{claim}
\noindent \emph{\underline{Proof of Claim~\ref{cl:xepsilon}}}. 
Let us consider a simple case when $T$ itself is a path. Hence, $P=T$. Then we can directly obtain the value of $x^\epsilon$  by reversing the order in which nodes are visited in $P$. Indeed, by feasibility we have:
\begin{equation}
\begin{split}
    & x^\epsilon_{f_p}=1+b_{v_p}(\epsilon), \\
    & x^\epsilon_{f_{p-1}}=1+b_{v_{p-1}}(\epsilon)-x^{\epsilon}_{f_p}=b_{v_{p-1}}(\epsilon)-b_{v_{p}}(\epsilon),\\
    & x^\epsilon_{f_{p-2}}=1+b_{v_{p-2}}(\epsilon)-x^{\epsilon}_{f_{p-1}}=1+b_{v_{p-2}}(\epsilon)+b_{v_{p}}(\epsilon)-b_{v_{p-1}}(\epsilon),\\
    & ...\\
\end{split}
\nonumber
\end{equation}
We can use induction to obtain the formula
\begin{equation}\label{eq:xepsilon}
    x^\epsilon_{f_{p-k}}=\left\{
    \begin{array}{cc}
     1-\left(\sum_{\ell=1}^{k/2} b_{v_{p-k+2\ell-1}}(\epsilon)-\sum_{\ell=0}^{k/2} b_{v_{p-k+2\ell}}(\epsilon)\right),  & \textrm{if $k$ is even} \\
     \sum_{\ell=1}^{(k+1)/2} b_{v_{p-k+2\ell-1}}(\epsilon)-\sum_{\ell=0}^{(k-1)/2} b_{v_{p-k+2\ell}}(\epsilon),   & \textrm{if $k$ is odd}
    \end{array}.
    \right.
\end{equation}
Notice that $k$ is even iff $x_{f_{p-k}}=1$, also iff $v_{p-k}$ is a man. Formula~\eqref{eq:xepsilon} can be translated into a more intuitive form. For a generic tree $T$ (not necessarily a path), we say $v$ (resp.~$e$) is a node (resp.~edge) \emph{after} $v_k$ on $T$ if there is a path $P=r,f_0,r,f_1,v_1,f_2,\dots,f_p,v_p$ so that, when we remove edge $f_k$, then $v$ (resp.~$e$) and $v_k$ are are still connected. Notice that $v_k$ is always after $v_k$ itself. \par
Going back to the case when $T$ is a path, the following can be easily deduced from~\eqref{eq:xepsilon}. If $x_{f_k}=1$, then 
\begin{equation}\label{eq:xepsilonman}
    x^\epsilon_{f_k}=1+\sum_{\substack{\textrm{$m$ is a man after $v_k$} }} b_m(\epsilon)-\sum_{\textrm{$w$ is a woman after $v_k$}} b_w(\epsilon),
\end{equation}
else if $x_{f_k}=0$, then
\begin{equation}\label{eq:xepsilonwoman}
    x^\epsilon_{f_k}=\sum_{\substack{\textrm{$w$ is a woman after $v_k$}}} b_w(\epsilon)-\sum_{\textrm{$m$ is a man after $v_k$}} b_m(\epsilon).
\end{equation}
It is not hard to see~\eqref{eq:xepsilonman} and~\eqref{eq:xepsilonwoman} hold in general. In fact, we say that $T$ has a \emph{branching} (\emph{appearing at $v$}) if there is $v$ such that $\deg_T(v)\geq 3$. We show that the two formulas are still true. We first observe that by Claim~\ref{cl:womandeg2}, the branching can only appear at some man. Let $v_k$ be a man with $\deg_T(v_k)\ge 3$. Assume every edge after $v_k$ on $T$ makes~\eqref{eq:xepsilonman} and~\eqref{eq:xepsilonwoman} true. Denote $P^{(\alpha)}=r,f_0,r,f_1,v_1,f_2,\dots,f_k,v_k,f^{(\alpha)}_{k+1},v^{(\alpha)}_{k+1},\dots$ as the paths that pass through $v_k$ with distinct branches $f^{(\alpha)}_{k+1}$ for $\alpha=1,\dots,\deg_T(v_k)-1$. Then by feasibility, 
\begin{equation}
\begin{split}
    x^\epsilon_{f_k}&=1+b_{v_k}(\epsilon)-\sum_{\alpha=1}^{\deg_T(v_k)-1}x^\epsilon_{f^{(\alpha)}_{k+1}}\\
    &=1+b_{v_k}(\epsilon)-\sum_{\alpha=1}^{\deg_T(v_k)-1}\left(\sum_{\substack{\textrm{$w$ is a woman after $v^{(\alpha)}_{k+1}$}\\ \textrm{(including $v^{(\alpha)}_{k+1}$)}}} b_w(\epsilon)-\sum_{\textrm{$m$ is a man after $v^{(\alpha)}_{k+1}$}} b_m(\epsilon)\right)\\
    &=1+\sum_{\substack{\textrm{$m$ is a man after $v_k$}\\ \textrm{(including $v_k$)}}} b_m(\epsilon)-\sum_{\textrm{$w$ is a woman after $v_k$}} b_w(\epsilon).
\end{split}
\end{equation}
Thus~\eqref{eq:xepsilonman} holds for $v_k$. For~\eqref{eq:xepsilonwoman}, since $v_k$ is a woman, we have $\deg_T(v_k)\leq 2$. Hence, no branching appears at $v_k$, and we deduce that~\eqref{eq:xepsilonwoman} immediately by the fact that~\eqref{eq:xepsilonman} holds for $v_{k+1}$ and feasibility. By induction from the leaf to the root, we can verify 
that both~\eqref{eq:xepsilonman} and~\eqref{eq:xepsilonwoman} hold for any edge on $T$.\par
Notice that we always have $\sum_{m\in M}b_m(\epsilon)<b_w(\epsilon)$ for any $m\in M$, $w\in W$. Hence, the cumulative effects on~\eqref{eq:xepsilonman} and~\eqref{eq:xepsilonwoman} make the monotonicity property~\ref{it:cl:xepsilon-1} and~\ref{it:cl:xepsilon-0} true, respectively. In fact, if $x_{f_i}=x_{f_j}=0$ and $i<j$, then $f_i$ is closer to $r$ than $f_j$. By~\eqref{eq:xepsilonwoman}, there are more women after $v_i$ than those after $v_j$, which makes $x^\epsilon_{f_i}$ larger, since $\sum_{m\in M}b_m(\epsilon)<b_w(\epsilon)$ for any $w\in W$. $x^\epsilon_{f_i}\le \frac{1}{2}$ holds because of Claim~\ref{cl:half}. We can also obtain the second statement of Claim~\ref{cl:xepsilon} similarly. \hfill $\small \blacksquare$\par
\medskip

Now we have all the ingredients to conclude the proof of Theorem~\ref{thm:perturbation}. It suffices to show that, if $(B,D)$ is the current Scarf pair, then Scarf's algorithm both on the original bipartite matching polytope following Algorithm~\ref{alg:pivoting} and on the perturbed polytope~\eqref{eq:nondegeneratepoly} will move to the same new Scarf pair $(B',D').$ We will prove this assuming $u_1 \in {\cal L}$. Similarly to the discussion in Section~\ref{sec:convergence}, a similar argument settles the case $u_1 \in {\cal M}$.

By induction hypothesis, in both cases the variable entering $B$ is the unique variable contained in $D\setminus B$, and as usual we denote it by $j_t$. In the non-degenerate case, we increase $x^\epsilon_{j_t}$ from $0$ to some strictly positive value (because of non-degeneracy), until there is $j_\ell\in B$ such that $x^\epsilon_{j_\ell}=0$. Since~\eqref{eq:nondegeneratepoly} is non-degenerate, the choice of $j_\ell$ is unique.

From Lemma~\ref{lem:cardpivot}, when $j_t$ enters, there are two possible cases:
\begin{enumerate}
\item[(I)]$e_{j_t}$ joins two different trees $T_1,T_2$ of $(V,E_B^v)$, as to form a larger tree $T$ with two loops, or 
\item[(II)] $e_{j_t}$ connects two nodes of a same tree $T_1$ of $(V,E_B^v)$,
\end{enumerate}
where in both cases $V_{T_1}$ contains the separator $v_i$. Hence, $T_1$ is of type (a).

Suppose we are in case (I). Then by Proposition~\ref{prop:separator}, and using $j_t \in E_D$, $T_2$ cannot contain a loop at a man. Thus $T_2$ can only be of type (b) or type (d). Let $P$ be the path of form~\eqref{eq:alter-disliked-path}, connecting the two loops. By Claim~\ref{cl:P-starts-with-loop}, when $e_{j_t}$ is one of the man-disliked edges, then all edges which decrease their $x$-value are woman-disliked, except the loop $(m_i,m_i)$ which is disliked by man $m_i$. This also holds for $x^\epsilon$, since the sign of changes is independent from the $b$ vector. Moreover, all the decreasing variables will equally change\footnote{The cardinal pivot maintains feasibility in the sense of $A_Bx_B+A_{j_t}x_{j_t}=b+b(\epsilon)$, thus $x_B=A_B^{-1}(b+b(\epsilon))-A_B^{-1}A_{j_t}x_{j_t}$. Therefore, $x_{j_i}$ will change as $x_{j_t}$ changes with derivative $(A_B^{-1}A_{j_t})_{j_i}$. Since both $A_B^{-1}$ and $A_{j_t}$ are totally unimodular matrices, if the derivative is negative, it can only be $-1$, thus all decreasing variable will change with the same speed.}. Hence, we will select from all the decreasing variables the one that smalest value in $x^\epsilon$ leave the basis because this variable will be the first one reaching $0$.

If $T_2$ is of type (b), then we know that $x_{(w_{i_{\frac{p}{2}}},w_{i_{\frac{p}{2}}})}=0$ by Claim~\ref{cl:xalternating}, and then by Claim~\ref{cl:xepsilon} we have $x^\epsilon_{(w_{i_{\frac{p}{2}}},w_{i_{\frac{p}{2}}})}\le \frac{1}{2}$. Thus $e_{j_\ell}$ must be woman-disliked because we know that $x^\epsilon_{(m_i,m_i)}>\frac{1}{2}\ge x^\epsilon_{(w_{i_{\frac{p}{2}}},w_{i_{\frac{p}{2}}})}$, so the smallest entry cannot be $x^\epsilon_{(m_i,m_i)}$.\par
If $T_2$ is of type (d), then $x_{(w_{i_{\frac{p}{2}}},w_{i_{\frac{p}{2}}})}=1$ and by feasibility $x^\epsilon_{(w_{i_{\frac{p}{2}}},w_{i_{\frac{p}{2}}})}=1+b_{w_{i_{\frac{p}{2}}}}(\epsilon)>1$. The path $P$ is in fact $x$-augmenting. By the second statement in Claim~\ref{cl:xepsilon}, the smallest decreasing variable in $P$ is $x^\epsilon_{(m_i,m_i)}$, since $x^\epsilon_{(m_i,m_i)}<1<x^\epsilon_{(w_{i_{\frac{p}{2}}},w_{i_{\frac{p}{2}}})}$, and by monotonicity we have $x^\epsilon_{(m_i,m_i)}<x^\epsilon_e$ for any woman-disliked edge on $E_P\cap E_{T_1}$.\par
Therefore, if (I) happens, the leaving edge is selected following the rule of Algorithm~\ref{alg:pivoting}.\par
If (II) happens, when $e_{j_t}$ is man-disliked, then by Claim~\ref{cl:every-other-w-disliked}, the decreasing variables are all corresponding to woman-disliked edges, thus $e_{j_\ell}$ must be woman-disliked, which also follows the rule of Algorithm~\ref{alg:pivoting}.

\end{proof}

\section{On Expressing Stable Matchings as Dominating Vertices}\label{sec:not-all-stable}


\subsection{Failure in Representing the Intermediate Matchings}

It is natural to ask which stable matchings can be output using Scarf's algorithm (with any pivoting rule). Under the hypothesis of $C$ being consistent (see Section~\ref{sec:overview-expressing-sm}) we give a necessary condition for a stable matching to be represented by a dominating basis in the marriage model. Combined with Example~\ref{ex:Irving-leather}, our result shows that the idea of expressing the stable matchings as dominating vertices is limited, in the sense that in the worst case, the number of stable matchings is exponentially greater than the number of those corresponding to dominating vertices of $\mathcal{P}_M$. Recall that a stable matching $\mu$ is $v$-optimal (see Section~\ref{sec:overview-expressing-sm}) for some agent $v$ if $v$  is matched in $\mu$ to her best partner among all the partners she is matched to across all stable matchings.

\begin{definition}[Intermediate Stable Matching] \label{def:intermediatematching}
We call a stable matching $\mu$ \emph{intermediate}, if there is no $v\in V$ such that $\mu$ is $v$-optimal.
\end{definition}

As usual (see the discussion in Section~\ref{sec:scarf-lemma}), we restrict to complete instances with the same number of men and women.

\begin{theorem}\label{thm:intermediatematching}
Suppose we have a marriage instance $\mathcal{I}=(G(V,E),\succ)$ with complete preference lists and same number of men and women. Pick any consistent ordinal matrix $C$ (c.f.~Definition~\ref{def:C-consistency}). If a stable matching $\mu$ is intermediate, then there is no dominating vertex $x$ such that $x$ is the characteristic vector of $\mu$.
\end{theorem}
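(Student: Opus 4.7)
The plan is to prove the equivalent statement Theorem~\ref{main:lastoptimal}: every dominating vertex $x$ of $(A,b,C)$ is the characteristic vector of a stable matching that is $v^{\ast}$-optimal for some $v^{\ast}\in V$. Let $B=D$ denote a dominating basis, and $\mu=\mu_{B}$ the stable matching it encodes (guaranteed by Theorem~\ref{thm:existence-sm} and the consistency of $C$).

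First, I would study the disliked-edge bijection from Proposition~\ref{prop:app1.1} to pinpoint a candidate agent $v^{\ast}$. Since $\mu\subseteq D$, each edge $(m,w)\in\mu$ is disliked by a unique agent in the bijection; consistency~(i) implies that the column-wise minimum $\min_{i}c_{i,(m,w)}$ is attained on row $m$ or $w$, which together with the ordinal-matrix property severely restricts which agents can arise as dislikers of $\mu$-edges. A careful counting argument, exploiting the fact that $D$ is simultaneously a feasible basis and an ordinal basis, then produces at least one agent $v^{\ast}$ with $u_{v^{\ast}}=c_{v^{\ast},\mu(v^{\ast})}$; equivalently, $D$ contains no edge strictly worse for $v^{\ast}$ than $\mu(v^{\ast})$.

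Second, I would argue that this $v^{\ast}$ is $v^{\ast}$-optimally matched by $\mu$, via contradiction. Suppose $v':=\mu'(v^{\ast})\succ_{v^{\ast}}\mu(v^{\ast})$ for some stable matching $\mu'$, and consider $e=(v^{\ast},v')$. Consistency~(ii) forces $c_{v^{\ast},e}>c_{v^{\ast},\mu(v^{\ast})}=u_{v^{\ast}}$, so row $v^{\ast}$ fails the ordinal basis condition for $e$. Stability of $\mu$ (since $e$ is not blocking) gives $\mu(v')\succ_{v'}v^{\ast}$, hence $c_{v',e}<c_{v',\mu(v')}$ by consistency~(ii); row $v'$ rescues the condition only when $u_{v'}=c_{v',\mu(v')}$, i.e.\ when $v'$ also satisfies the well-behaved property of $v^{\ast}$. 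Consistency~(i) yields $c_{i,e}>\max\{c_{v^{\ast},e},c_{v',e}\}$ for every $i\notin\{v^{\ast},v'\}$, and the forest-with-single-loops structure of $D$ from Lemma~\ref{lem:forest} is then used to preclude rows $i\notin\{v^{\ast},v'\}$ from rescuing the condition.

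The contradiction is closed by iteration and finiteness. When row $v'$ rescues, apply the same argument to $v'$ to produce $v''=\mu^{v'}(v')\succ_{v'}\mu(v')$ from another stable matching $\mu^{v'}$, and continue, forming a sequence $v^{\ast}\to v'\to v''\to\ldots$ that alternates between men and women and must eventually enter a cycle. This cycle mirrors a rotation of $\mu$ in the stable-matching lattice; the forest-with-single-loops structure of $G_{D}$ then prevents all cycle edges from being simultaneously present in $D$, leaving an edge whose ordinal basis condition fails outright. The main obstacle is the cycle-closure step, which requires coordinating the lattice structure of stable matchings, stability constraints, consistency conditions, and the basis geometry of $D$ in a single unified argument.
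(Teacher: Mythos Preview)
Your plan diverges substantially from the paper's and, as sketched, does not close. The iteration you set up shows that for each edge $e^{(k)}=(v^{(k)},v^{(k+1)})$, row $v^{(k+1)}$ \emph{does} satisfy $u_{v^{(k+1)}}\ge c_{v^{(k+1)},e^{(k)}}$---so the ordinal-basis condition is met for every such column, and you never produce a witness that violates it. The cycle you obtain is a cycle of agents, not a cycle of edges lying in $D$; the edges $e^{(k)}$ need not belong to $D$ at all, so the forest structure of $G_D$ says nothing about them, and even if it did, knowing that one of them is absent from $D$ is not a contradiction with $D$ being an ordinal basis. (Your intermediate claim ``row $v'$ rescues only when $u_{v'}=c_{v',\mu(v')}$'' is also false as stated---any $u_{v'}\ge c_{v',e}$ suffices---though this particular slip does not by itself break the iteration.)

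The paper's argument is a single shot, avoiding iteration entirely. Two ingredients you do not use make this possible. First, Lemma~\ref{lem:forest} guarantees that the dominating basis $D$ contains at least one loop $(v_\ell,v_\ell)$, so $u_{v_\ell}=c_{v_\ell,(v_\ell,v_\ell)}$ is the global minimum of row $v_\ell$; \emph{every} other column then automatically exceeds $u_{v_\ell}$ in that row. Second, Lemma~\ref{lem:one-side-optimal} supplies the man-optimal matching $\mu_0$ (taking $v_\ell\in W$ without loss of generality), and one sets $m^\ast=\mu_0(v_\ell)$. The single edge $e=(m^\ast,v_\ell)$ then witnesses the contradiction: row $v_\ell$ fails trivially by the loop; row $m^\ast$ fails because $v_\ell$ is $m^\ast$'s best stable partner and $\mu$ is intermediate, hence $c_{m^\ast,e}>c_{m^\ast,(m^\ast,\mu(m^\ast))}\ge u_{m^\ast}$; and all remaining rows fail by consistency. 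Your search for a special $v^\ast$ via ``a careful counting argument'' is thus replaced by the loop in $D$, and your entire iteration is replaced by one well-chosen partner from $\mu_0$.
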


Theorem~\ref{thm:intermediatematching} tells us that under the consistency condition, there is no way to represent an intermediate stable matching by any dominating vertex. Thus, any implementation/pivoting rule of Scarf's algorithm will not lead us to such stable matchings. To prove this, we use the following well-known result, see~\cite{gusfield1989stable}.

\begin{lemma}\label{lem:one-side-optimal}
Consider a marriage instance ${\cal I}$ defined over a graph $G(M \cup W,E)$. Then, there exists two stable matchings $\mu_0$ and $\mu_z$ such that $\mu_0$ (resp.~$\mu_z$) is $m$-optimal (resp.~$w$-optimal) for any $m\in M$ (resp.~$w\in W$).
\end{lemma}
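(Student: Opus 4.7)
The plan is to prove the lemma by invoking the classical Gale--Shapley deferred acceptance algorithm, run twice: once with men proposing (yielding $\mu_0$) and once with women proposing (yielding $\mu_z$). Each run produces a stable matching; the key content is to show that the resulting matching is optimal from the proposing side, which I would establish by an inductive argument on the sequence of proposals.

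First I would formally describe the men-proposing deferred acceptance procedure: at each step, every currently unmatched man proposes to his most-preferred woman among those who have not yet rejected him; each woman tentatively holds her most-preferred proposal so far (according to $\succ_w$) and rejects all others, who then move on to their next choice. Since every rejection strictly advances some man down his preference list, the procedure terminates after at most $k^2$ proposals. Let $\mu_0$ be the resulting matching. Stability is routine: if $(m,w)$ blocked $\mu_0$, then $m$ prefers $w$ to $\mu_0(m)$, so $m$ must have proposed to $w$ at some earlier stage, and $w$ would have either retained $m$ or later rejected him only in favor of some $m'$ with $m' \succ_w m$, so $\mu_0(w) \succeq_w m' \succ_w m$, contradicting that $(m,w)$ blocks $\mu_0$.

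The main step, and the principal obstacle, is the optimality claim. Call a woman $w$ \emph{achievable} for $m$ if some stable matching pairs them. I would prove by induction on the step of the algorithm that no man is ever rejected by an achievable woman. Suppose for contradiction that the first violation occurs when $w$ rejects $m$ in favor of $m'$, and let $\nu$ be a stable matching with $\nu(m)=w$. Set $w' := \nu(m')$. By the inductive hypothesis, $m'$ has not been rejected by any achievable woman up to this step, and in particular not by $w'$; since $m'$ proposed to $w$ rather than to $w'$ or someone better, $m'$ prefers $w$ to $w'$. Combined with $m' \succ_w m = \nu(w)$, the pair $(m',w)$ blocks $\nu$, a contradiction. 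Hence at termination, each man $m$ is matched to his best achievable woman, which is precisely $m$-optimality. The matching $\mu_z$ is then obtained by swapping the roles of $M$ and $W$ throughout; the symmetric argument yields $w$-optimality of $\mu_z$ for every $w \in W$, completing the proof.
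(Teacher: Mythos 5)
Your proof is correct. The paper does not prove this lemma itself---it invokes it as a well-known classical fact and cites Gusfield and Irving---and your argument is precisely the standard deferred-acceptance proof (men-propose to get $\mu_0$, the ``no man is ever rejected by an achievable woman'' induction for $m$-optimality, and the symmetric women-proposing run for $\mu_z$) that the cited reference contains, so it matches the approach the paper relies on.
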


\begin{proof}[Proof of Theorem~\ref{thm:intermediatematching}.]

Fix a intermediate stable matching $\mu$ (we assume there exists one, otherwise the statement trivially holds). Fix $(A,b,C)$ such that $(A,b)$ defines the matching polytope of the instance and $C$ is consistent. Suppose by contradiction there exists a dominating vertex $x$ for $(A,b,C)$, such that $x$ is the characteristic vector of $\mu$, i.e.~$x_e=1$ if and only if $e\in \mu$.\par
Consider any dominating basis $B$ corresponding to the vertex $x$. Notice that if $e_j\in\mu$, then $j\in B$. Therefore, then,lity vector associated to the ordinal basis $B$, we have for any $i\in[n]$,
\begin{equation}\label{eq:utilityofdominatingbasis}
    u_i=\min_{j\in B} c_{ij}\le c_{i,(i,\mu(i))}.
\end{equation}

Notice that by Lemma~\ref{lem:forest}, there exists at least one loop in $E_B$. Suppose $(v_\ell,v_\ell)\in E_B$ for some $\ell$. Then, by consistency of $C$, we have 
\begin{equation}\label{eq:uell}
    u_\ell=\min_{j\in B} c_{\ell j}=c_{\ell,(v_\ell,v_\ell)}.
\end{equation}
Without loss of generality, we assume $v_\ell\in M$ is a woman.
Let $\mu_0$ be the man-optimal stable matching. Consider the edge $e=(m^*,v_\ell)$, where $m^*=\mu_0(v_\ell)$ is the partner of $v_\ell$ in $\mu_0$. Note that $m^*$ exists since we assume that the number of men and women coincide and lists are complete. By Lemma~\ref{lem:one-side-optimal}, from all stable matchings, $v_\ell$ is the best possible partner of $m^*$. Denote the column corresponding to $e$ as $c_e$, then, by consistency of $C$ and~\eqref{eq:uell},
\begin{equation}\label{eq:c>u-1}
    c_{v_\ell,e}>c_{v_\ell,(v_\ell,v_\ell)}=u_\ell.
\end{equation}
Notice that $v_\ell$ is the best possible partner among all partners $m^*$ is matched to in a stable matching, while $\mu(m^*)$ is less preferred by $m^*$ since $\mu$ is intermediate. Combining this observation with~\eqref{eq:utilityofdominatingbasis}, we obtain
\begin{equation}\label{eq:c>u-2}
    c_{m^*,e}>c_{m^*,(m^*,\mu(m^*))}\ge u_{m^*}.
\end{equation}
By the fact that a node $v \neq v_\ell,m^*$ is not incident to $e$ and the consistency of $C$, we have \begin{equation}\label{eq:c>u-3}
    c_{v,e}>u_v, \forall v\neq v_\ell,m^*.
\end{equation}
Now, by~\eqref{eq:c>u-1},~\eqref{eq:c>u-2}, and~\eqref{eq:c>u-3}, we find a column $c_e$ such that $c_e>u$, contradicting that $B$ is a dominating basis. Then the thesis follows.

\end{proof}

\subsection{An Example with Exponentially Many Intermediate Matchings}\label{sec:Irving-leather}

\begin{example}\label{ex:Irving-leather}
We give an infinite family of instances where the $v$-optimal stable matchings form an exponentially smaller subset of the set of all stable matchings.

For $n=2k \in 2 \mathbb{N}$, consider the instance with men $m_0,\dots, m_{k-1}$, women $w_0,\dots, w_{k-1}$. For $i \in \{0,\dots,k-1\}$, the preference list of man $m_i$ is given by:
$$
m_i : w_i \succ_{m_i} w_{i+1} \succ_{m_i} w_{i+\frac{k}{2}+1} \succ_{m_i} w_{i+\frac{k}{2}+2},
$$
where indices are taken modulo $k$. Women's lists also have length $4$ and are such that woman $w_j$ lists man $m_i$ in position $\ell$ if and only if man $m_i$ lists woman $w_j$ in position $5-\ell$. That is, for $i \in \{0,\dots,k-1\}$ we have:
$$
w_i : m_{i-\frac{k}{2}-2} \succ_{w_i} m_{i-\frac{k}{2}-1} \succ_{w_i} m_{i-1} \succ_{w_i} m_{i},
$$
where again indices are taken modulo $n$. See Table~\ref{table:stable-matching-instance} for an example.

\begin{table}[h!]
\centering
 \begin{tabular}{c||c c c c} 
 0 & 0 & 1 & 6 & 7 \\ 
1 & 1 & 2 & 7 & 8 \\ 
 2 & 2 & 3 & 8 & 9 \\
 3 & 3 & 4 & 9 & 0 \\
 4 & 4 & 5 & 0 & 1 \\
 5 & 5 & 6 & 1 & 2 \\
 6 & 6 & 7 & 2 & 3 \\
 7 & 7 & 8 & 3 & 4 \\
 8 & 8 & 9 & 4 & 5 \\
 9 & 9 & 0 & 5 & 6
 \end{tabular}
 \qquad \qquad 
  \begin{tabular}{c||c c c c} 
 0 & 3 & 4 & 9 & 0 \\ 
1 & 4 & 5 & 0 & 1 \\ 
 2 & 5 & 6 & 1 & 2 \\
 3 & 6 & 7 & 2 & 3 \\
 4 & 7 & 8 & 3 & 4 \\
 5 & 8 & 9 & 4 & 5 \\
 6 & 9 & 0 & 5 & 6 \\
 7 & 0 & 1 & 6 & 7 \\
 8 & 1 & 2 & 7 & 8 \\
 9 & 2 & 3 & 8 & 9
 \end{tabular}
 \caption{The instance constructed in Example~\ref{ex:Irving-leather} for $n=20$. On the left, preference lists or men are given, while on the right, preference lists of women are given.}\label{table:stable-matching-instance}
\end{table}
Lists are incomplete, but it is well-known that one can complete them by adding missing entries at the end of the preference lists, without changing the set of stable matchings, see, e.g.,~\cite{gusfield1989stable}. So the hypothesis from Theorem~\ref{thm:intermediatematching} hold, and we investigate the instance with incomplete lists for ease of exposition. For $i=0,\dots,k-1$, man $m_i$ lists woman $w_i$ as his top choice. Hence, the man-optimal stable matching $\mu_0$ assigns man $m_i$ to woman $w_i$. Similarly, the woman-optimal stable matching $\mu_z$ assigns each woman their favorite man. 

\end{example}

We next prove the required properties for Example~\ref{ex:Irving-leather}. The next claim shows that the man- and the woman-optimal stable matchings are the the only stable matchings that are $v$-optimal for some $v$.

\begin{claim}\label{cl:one-man-optimal}
Let $\mu$ be a $v$-optimal stable matching for some $v \in M \cup W$. Then $\mu\in \{\mu_0,\mu_z\}$.\end{claim}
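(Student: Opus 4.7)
The plan is to exploit the lattice properties from Lemma~\ref{lem:one-side-optimal}: the matching $\mu_0$ is simultaneously $m$-optimal for every $m\in M$, and $\mu_z$ is simultaneously $w$-optimal for every $w\in W$. Consequently, if a stable matching $\mu$ is $v$-optimal for some $v$, then either $v=m_j$ for some $j$, which forces $\mu(m_j)=\mu_0(m_j)=w_j$, or $v=w_j$ for some $j$, which forces $\mu(w_j)=\mu_z(w_j)=m_{j-k/2-2}$. Thus Claim~\ref{cl:one-man-optimal} reduces to two rigidity statements: (i) any stable matching $\mu$ in which some man $m_j$ is paired with $w_j$ must equal $\mu_0$; and (ii) any stable matching in which some woman $w_j$ is paired with her top choice $m_{j-k/2-2}$ must equal $\mu_z$.

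For (i), the plan is to run a backward cyclic chain of one-step blocking-pair arguments. Assuming $\mu(m_j)=w_j$, I inspect the candidate pair $(m_{j-1},w_j)$. In $w_j$'s length-four list $m_{j-k/2-2},\, m_{j-k/2-1},\, m_{j-1},\, m_j$, the candidate $m_{j-1}$ occupies position three and her current partner $m_j$ occupies position four, so $w_j$ strictly prefers $m_{j-1}$ to $\mu(w_j)$. Stability of $\mu$ therefore forces $m_{j-1}$ to prefer $\mu(m_{j-1})$ to $w_j$; but in $m_{j-1}$'s list $w_{j-1},\, w_j,\, w_{j+k/2},\, w_{j+k/2+1}$, the only entry ahead of $w_j$ is $w_{j-1}$, so $\mu(m_{j-1})=w_{j-1}$. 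Iterating this one-step deduction around the cyclic sequence $j,\, j-1,\, j-2,\dots$ yields $\mu(m_i)=w_i$ for every $i\in\{0,\dots,k-1\}$, i.e., $\mu=\mu_0$.

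For (ii), the dual chain slides the women-index by $-1$ and targets $w_{j-1}$. Assuming $\mu(w_j)=m_{j-k/2-2}$, equivalently $\mu(m_{j-k/2-2})=w_j$, I consider the pair $(m_{j-k/2-2},w_{j-1})$. In $m_{j-k/2-2}$'s list $w_{j-k/2-2},\, w_{j-k/2-1},\, w_{j-1},\, w_j$ the candidate $w_{j-1}$ sits in position three and $w_j$ in position four, while in $w_{j-1}$'s list $m_{j-k/2-3},\, m_{j-k/2-2},\, m_{j-2},\, m_{j-1}$ the candidate $m_{j-k/2-2}$ sits in position two, ranked above only by $w_{j-1}$'s top choice $m_{j-k/2-3}$. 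Non-blocking therefore forces $\mu(w_{j-1})=m_{j-k/2-3}=m_{(j-1)-k/2-2}$, so $w_{j-1}$ is paired with her $\mu_z$-partner; iterating around the cycle of length $k$ yields $\mu=\mu_z$. Combined with (i), this establishes Claim~\ref{cl:one-man-optimal}. The main point requiring care, and the place where the argument could fail if the instance were tweaked, is verifying at each step that the chosen witness pair lies in positions three/four of one agent's list and in position two/one of the other's, so that the forced partner is exactly the $\mu_0$- or $\mu_z$-partner; but these rank positions follow directly from the cyclic definitions in Example~\ref{ex:Irving-leather}, and once they are checked the chain closes cleanly after a single full cycle.
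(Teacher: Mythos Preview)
Your proof is correct and takes a genuinely different route from the paper. The paper argues via the rotation poset: it shows (Claim~\ref{cl:example-intermediate}) that the unique rotation exposed at $\mu_0$ is a single Hamiltonian cycle $\rho_0$, so every stable matching other than $\mu_0$ is reached from $\mu'=\mu_0\Delta\rho_0$, in which every man is already at his second choice; Lemma~\ref{lem:rotations-move} then rules out any later man-improvement, and a symmetry argument handles the women's side. Your argument bypasses rotations entirely and runs a direct blocking-pair propagation: from $\mu(m_j)=w_j$ you force $\mu(m_{j-1})=w_{j-1}$ via the pair $(m_{j-1},w_j)$, and cycle around; the dual chain does the same for $\mu_z$. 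This is more elementary---it needs only stability and the explicit positions in the length-four lists---and in fact yields the slightly sharper statement that any stable matching agreeing with $\mu_0$ (resp.\ $\mu_z$) at a single agent must equal $\mu_0$ (resp.\ $\mu_z$). The paper's approach, by contrast, reuses machinery it already needs for Claim~\ref{cl:intermediate} and would transfer more readily to variants of the instance. One small point worth making explicit in your write-up: when you conclude $\mu(m_{j-1})=w_{j-1}$ from ``the only entry ahead of $w_j$ is $w_{j-1}$,'' you are implicitly using that $\mu(m_{j-1})$ lies on $m_{j-1}$'s short list, which follows since $\mu_0(m_{j-1})$ and $\mu_z(m_{j-1})$ are respectively the first and last entries of that list and every stable partner lies between them.
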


On the other hand, the instance has exponentially many stable matchings.

\begin{claim}\label{cl:intermediate}
Let $S\subset  \{0,\dots,\frac{k}{2}-1\}$. Then the matching that, for $i \in S$, assigns men $i, i+\frac{k}{2}$ to their third-favorite partner and, for $i \in \{0,\dots,\frac{k}{2}-1\}\setminus S$, assigns men $i,i+\frac{k}{2}$  their second-favorite partner is stable.
\end{claim}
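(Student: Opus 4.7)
The plan is to verify both parts of the claim directly: that $\mu_S$ is a matching, and that it has no blocking pair.

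For the first part, I would group agents into $k/2$ blocks, where block $i$ (for $i\in\{0,\dots,k/2-1\}$) contains the four agents $m_i, m_{i+k/2}, w_{i+1}, w_{i+k/2+1}$, with indices taken mod $k$. A direct check shows these blocks partition $M\cup W$. The matching $\mu_S$ pairs the two men of block $i$ with the two women of block $i$: either $m_i\to w_{i+1}$ and $m_{i+k/2}\to w_{i+k/2+1}$ (when $i\notin S$), or $m_i\to w_{i+k/2+1}$ and $m_{i+k/2}\to w_{i+1}$ (when $i\in S$), using that $m_{i+k/2}$'s 3rd choice is $w_{i+k/2+k/2+1}=w_{i+1}$ mod $k$. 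Hence $\mu_S$ is a perfect matching, and each man is matched to his 2nd or 3rd choice by construction; writing out the preferences of $w_{i+1}$ and $w_{i+k/2+1}$ confirms that each woman is also matched to her 2nd or 3rd choice.

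For stability, I would exploit the palindromic structure of preferences: by construction, $w_j$ is $m_a$'s $\ell$-th choice if and only if $m_a$ is $w_j$'s $(5-\ell)$-th choice. Thus $m_a$'s 1st choice $w_a$ has $m_a$ as her 4th choice, and $m_a$'s 2nd choice $w_{a+1}$ has $m_a$ as her 3rd choice. Now suppose $(m_a,w_b)$ were a blocking pair. Since $m_a$ is matched to his 2nd or 3rd choice, he can only strictly prefer $w_b$ when $w_b$ is his 1st or 2nd choice. If $w_b=w_a$, then $m_a$ is her 4th choice, strictly worse than her actual match (her 2nd or 3rd), so she does not prefer $m_a$. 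If $w_b=w_{a+1}$, then $m_a$ is her 3rd choice; since she is matched to her 2nd or 3rd choice, she strictly prefers $m_a$ only when her current partner is ranked 4th by her, which never happens. Hence no blocking pair exists.

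The main obstacle, though mild, will be the modular-arithmetic bookkeeping: one has to check carefully that block $i$'s two women indeed have the two men of block $i$ as their 2nd and 3rd preferences, and that the palindromic identification between $m_a$'s top two choices and the 3rd/4th choices of $w_a, w_{a+1}$ really does hold as stated. Once this is in place, the stability check reduces to the short case analysis above.
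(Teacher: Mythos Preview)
Your direct verification is correct and complete: the block partition shows $\mu_S$ is a perfect matching, and the palindromic relation (rank of $w$ in $m$'s list) $+$ (rank of $m$ in $w$'s list) $=5$, which is explicitly built into the instance, immediately gives that every agent sits at their 2nd or 3rd choice; your two-case analysis then rules out blocking pairs cleanly.

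This is a genuinely different route from the paper. The paper argues via the rotation machinery: it first obtains the ``all second choices'' matching $\mu'$ by eliminating the single rotation $\rho_0$ exposed at $\mu_0$, then observes that each $\rho_i = m_i, w_{i+1}, m_{i+k/2}, w_{i+k/2+1}$ is a rotation exposed at $\mu'$, and invokes the standard fact that eliminating any subset of simultaneously exposed rotations yields a stable matching. Your approach is more elementary and self-contained --- it needs no rotation theory and no appeal to the intermediate Claim about $\mu'$. The paper's approach, on the other hand, fits naturally into the surrounding argument, since the rotation lattice is already set up to prove the companion Claim that $\mu_0$ and $\mu_z$ are the only $v$-optimal matchings; reusing it here avoids a separate stability check.
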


Let us show that Claim~\ref{cl:one-man-optimal} and Claim~\ref{cl:intermediate} imply the thesis. By Claim~\ref{cl:intermediate}, the instance has at least $\sqrt{2}^k$ stable matchings. On the other hand, by Claim~\ref{cl:one-man-optimal} there are exactly $2$ stable matchings that are $v$-optimal for some agent $v$.

The proofs of Claim~\ref{cl:one-man-optimal} and Claim~\ref{cl:intermediate} require the introduction of the classical concept of \emph{rotations exposed at a matching}, and is given in Appendix~\ref{app:exponentially-many}.

\subsection{When $C$ is not consistent}

foe in the next example, even if we drop the condition that $C$ is consistent, the broader class of all ordinal matrices $C$  does not help us represent all stable matchings, even if we allow for choices of $C$ such that some of the dominating vertices of $(A,b,C)$ are not stable matchings. We show this fact through the next example that is well-studied in many stable matching contexts, for instance in~\cite{dworczak2021deferred}:

\begin{example}\label{ex:scarfnegative}
Consider the following preference lists for $6$ agents: 

\begin{table}[h!]
\centering
 \begin{tabular}{c||c c c } 
 1 & 1 & 2 & 3  \\ 
 2 & 2 & 3 & 1  \\ 
 3 & 3 & 1 & 2  
 \end{tabular}
 \qquad \qquad 
  \begin{tabular}{c||c c c } 
 1 & 2 & 3 & 1  \\ 
 2 & 3 & 1 & 2  \\ 
 3 & 1 & 2 & 3  
 \end{tabular}
 \caption{The instance constructed in Example~\ref{ex:scarfnegative} for $n=6$. On the left, preference lists or men are given, while on the right, preference lists of women are given.}\label{table:6-agents-example}
\end{table}

We have three stable matchings: man-optimal $\mu_1=\{(m_1,w_1),(m_2,w_2),(m_3,w_3)\}$, intermediate $\mu_2=\{(m_1,w_2),(m_2,w_3),(m_3,w_1)\}$, and woman-optimal $\mu_3=\{(m_1,w_3),(m_2,w_1),(m_3,w_2)\}$.

We drop the condition of consistency on $C$ (c.f.~Section~\ref{sec:overview-expressing-sm}). We want to find an ordinal matrix $\tilde{C}$, such that all dominating vertices of $(A,b,\tilde{C})$ contain the three vertices $x_1,x_2,x_3$ of the polytope $\mathcal{P}_{FM}$ defined by $(A,b)$ such that $x_i$ is the characteristic vector of $\mu_i$ for $i=1,2,3$. Notice that, we do not even consider how to implement Scarf's algorithm to obtain them. Given the size of the instance, is not hard to do a complete enumeration of all matrices $\tilde C$ to conclude that such $\tilde{C}$ does not exist (recall that only the relative ordering of entries of $\tilde C$ matters). 
\end{example}

\section{Conclusions and Future Work}\label{sec:Conclusions}

Our paper shows what is, to the best of our knowledge, the first proof of polynomial-time convergence of Scarf's algorithm in relevant settings, as well as the first negative results on the expressive power of dominating vertices, hence of approaches that rely on Scarf's result. On one hand, we give supporting evidence that Scarf's algorithm can be proved to run in polynomial time in relevant cases, especially when we can leverage on a combinatorial interpretation of the input. On the other hand, we show that Scarf's algorithm can have structural limits much stronger than those of the search and enumeration problems it is associated to. 

Understanding Scarf's algorithm on the bipartite matching polytope foi ordinal matrices $C$ is an appealing theoretical question, although we are not aware of any model employing more general matrices $C$ on the bipartite matching polytope. Two specific questions here are in order. First, although we do not expect non-consistent $C$ to be meaningful for expressing stable matchings as dominating vertices, this statement requires a proof, and it would be useful to have an extension of Theorem~\ref{thm:Scarf-is-weak} to non-consistent matrices. Second, it would be interesting to understand whether Scarf's algorithm converges in polynomial time on the bipartite matching polytope for any ordinal matrix $C$. This would require an understanding of the problem that goes beyond stable marriages.


Future work also include the investigation of Scarf's algorithm in more general settings, as well as the relation between our pivoting rules and classical algorithms in the area, such as Tan's~\cite{tan1995generalization} and Roth-Vande Vate's random paths to stability~\cite{roth1990random}. 

\section*{Acknowledgments.} Yuri Faenza and Chengyue He acknowledge support from the National Science Foundation grant \emph{CAREER: An algorithmic theory of matching markets}.

\begin{appendix}

\section{Missing proofs}

\subsection{Proof of Lemma~\ref{lem:forest} and Lemma~\ref{lem:tree}}\label{sec:app:proof-basis}

\begin{proof}[Proof of Lemma~\ref{lem:forest}.]

First assume $B$ is a basis, i.e., $rank(B)=n$. To show the first claim, if $E_B^v$ contains a cycle, then it is an even cycle. An even cycle implies linear dependence of the corresponding columns in $B$, a contradiction.\par
Suppose there is a singleton node $v_i$, covered neither by a valid edge nor by its loop. Then the row $v_i$ of $B$ are all $0$'s, which makes $rank(B)\le n-1$, a contradiction.\par
Recall the following linear algebra fact: write $B=\left(\begin{array}{cc}B_1 & B_2 \\ B_3 & B_4\end{array}\right)$, and suppose that $B_4 = 0$ and $B_3$ has $q$ rows. Then $B_3$ has at least $q$ columns\footnote{This fact follows by subadditivity of the rank, since otherwise
$n=rk(B)\leq rk(B_1 \, , \, B_2) + rk(B_3) + rk(B_4) < n-q + q + 0 = n$, a contradiction.}. Consider a tree $T$ with at least two nodes (thus at least one valid edge), suppose first a tree of $E^v_B$ is incident to no loop. Then we can apply the statement above to $B$ by taking the rows of $B_3$ to be the node set of $T$, and its columns the edge set of $T$ and obtaining a contradiction. If conversely a tree of $E^v_B$ is incident to two or more loops, we can apply the statement above by taking the rows of $B_3$ to be all nodes except those of $T$, and by columns all columns of $B$ corresponding to edges not incident to nodes in $T$, obtaining again a contradiction.\par
\medskip
On the other hand, suppose the two graph conditions are satisfied. It is well-known that an incidence matrix of a tree can be permuted to have the form 
\begin{displaymath}
\begin{pNiceMatrix}[first-row,first-col]
     & e_{j_1} & e_{j_2} & \cdots & e_{j_t} \\
    v_{i_1} & 1 & * & & * \\
    v_{i_2} & 1 & * & & * \\
    v_{i_3} & 0 & 1 & & * \\
    \vdots & \vdots & \vdots & \ddots & \vdots \\
    v_{i_{t+1}} & 0 &0 & & 1
\end{pNiceMatrix}
\end{displaymath}
 with arbitrary node $v_{i_1}$ to be the first row (this fact can be shown by induction), and each column having exactly two $1$s. We can extend this fact to our tree with a loop $e_{j_{t+1}}$ incident, w.l.o.g., to $v_{i_1}$. After adding the loop column, the new $t\times t$ square submatrix has determinant $1$, implying full rank. We can apply this to every tree in our forest and obtain that $B$ has full rank, thus a basis.
\end{proof}

\begin{proof}[Proof of Lemma~\ref{lem:tree}.]

By Lemma~\ref{lem:forest}, suppose that the edges $E_B$ can be decomposed into $\tau$ connected components. Then for every $\omega\in [\tau]$, consider the $\omega$-th component, its local incidence matrix corresponds to a square submatrix $B_\omega$, up to permuting rows and columns. Thus we obtain the desired result.

 \end{proof}

\subsection{Proof of Lemma~\ref{lem:m1lemma}}\label{sec:app:proof-ordinpivot}
\begin{proof}

Let $(B,D)$ be the first Scarf pair with $u_1\in \mathcal{M}$ obtained by applying Algorithm~\ref{alg:pivoting}. 
We have already argued that, in the Scarf pair preceding $(B,D)$, we have $u_1 \in {\cal L}$. We can then apply Lemma~\ref{lem:mloopordpivot} part 2(ii) and deduce that 
(i),(ii) from Lemma~\ref{lem:m1lemma} are satisfied, and $e_{j_t}$ is a man-disliked valid edge.

Let now $(B,D)$ be a generic Scarf pair visited by the algorithm, and assume it verifies $u_1 \in {\cal M}$, (i), (ii) from Lemma~\ref{lem:m1lemma} and that $e_{j_t}$ is a man-disliked valid edge. We show that $(B,D)$ satisfies (iii) from Lemma~\ref{lem:m1lemma}. As a consequence of (iii), the algorithm either continues or terminates. If it continues, we obtain (iv) and that the next pair $(B',D')$ visited by the algorithm satisfies $u'_1 \in {\cal M} $, (i),(ii) from Lemma~\ref{lem:m1lemma} and $e_{j^*}$ (where $\{j^*\}=D'\setminus D$) is a man-disliked valid edge. Else, if we terminate, then we obtain $B'=D$ (i.e., $(v)$ is verified). This concludes the proof. 

\smallskip

We start with auxiliary claims. Recall that the basis structure (i.e., Lemma~\ref{lem:forest}) is independent of the ordinal basis and the utility vector. Consider the tree $T$ (with single loop, but we omit to repeat ``with single loop'' below) of $G_B$ containing $m_1$. Unlike the case $u_1 \in {\cal L}$, now $T$ is no longer a singleton, since we know from (ii) there is a valid edge $(m_1,w) \in E_D$.

\smallskip

\begin{claim}\label{cl:Tispivoting}
The entering edge $e_{j_t} (\notin E_B)$ is incident to $T$. In other words, $e_{j_t}$ and $T$ have at least one common node.
\end{claim}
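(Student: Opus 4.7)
\textbf{Proof proposal for Claim~\ref{cl:Tispivoting}.} The plan is to argue by contradiction: assume $e_{j_t}$ is not incident to any vertex of $T$, then build a contradiction by counting the bijection between edges of $D$ and the nodes that dislike them, restricted to the modified component of $G_{B\cup\{e_{j_t}\}}$ containing $e_{j_t}$.

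First I would set up the combinatorics. Let $\Gamma$ denote the connected component of $G_{B\cup\{e_{j_t}\}}$ that contains $e_{j_t}$. Since $e_{j_t}\notin E_B$, by Lemma~\ref{lem:forest} one of two things happens inside $\Gamma$: either $e_{j_t}$ joins two single-loop trees of $G_B$, or $e_{j_t}$ creates an (even) cycle inside a single tree of $G_B$. In both cases a direct count gives $|E_\Gamma|=|V_\Gamma|+1$. Under the contradiction hypothesis $m_1\notin V_\Gamma$, so in particular the loop $e_1=(m_1,m_1)$ (which is in $E_B$ but not in $E_D$, as $1\in B$, $1\notin D$) does not belong to $E_\Gamma$. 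Combined with $e_{j_t}\in E_D$ and $E_B\setminus\{e_1\}\subseteq E_D$, this gives the crucial inclusion $E_\Gamma\subseteq E_D$.

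Next I would exploit the disliked bijection. By Definition~\ref{def:disliked} each edge of $E_D$ has a unique node in $V$ disliking it, so there are exactly $|E_\Gamma|$ distinct nodes disliking the edges of $\Gamma$. On the other hand, Corollary~\ref{obs:almost-feasible-v-i} (for valid edges) together with the elementary fact that a loop in $E_D$ is disliked by its incident vertex, forces every node disliking an edge of $\Gamma$ to lie in $V_\Gamma\cup\{m_1\}$. That set has cardinality $|V_\Gamma|+1=|E_\Gamma|$, so equality of counts forces every node in $V_\Gamma\cup\{m_1\}$ to dislike some edge of $\Gamma$; in particular $m_1$ must dislike an edge of $\Gamma$.

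Finally I would invoke induction hypothesis (ii) of Lemma~\ref{lem:m1lemma}: the unique $m_1$-disliked edge in $D$ is $(m_1,w)$ for some $w\in W$, which is incident to $m_1$ and hence, under our contradiction hypothesis $m_1\notin V_\Gamma$, does not lie in $E_\Gamma$. This contradicts the conclusion of the previous paragraph and completes the argument. The main potential pitfall, and the only step that really needs care, is making sure that $E_\Gamma\subseteq E_D$ (so that the bijection from Definition~\ref{def:disliked} applies uniformly to all edges of $\Gamma$) and that the enumeration of the possible disliking nodes is exhaustive; both are handled cleanly by using (i) to exclude $e_1$ from $\Gamma$ and by distinguishing loops from valid edges when applying Corollary~\ref{obs:almost-feasible-v-i}.
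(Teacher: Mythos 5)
Your proof is correct and follows essentially the same route as the paper's: both set up the component $\Gamma$ with $|E_\Gamma|=|V_\Gamma|+1$, deduce $E_\Gamma\subseteq E_D$ from the contradiction hypothesis, and derive a pigeonhole contradiction from the disliked-edge bijection together with Corollary~\ref{obs:almost-feasible-v-i} and induction hypothesis (ii). The only difference is cosmetic — the paper excludes $m_1$ as a possible disliker first and then counts, whereas you count first and then rule out $m_1$.
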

\noindent \emph{\underline{Proof of Claim~\ref{cl:Tispivoting}}}. 
Since $e_{j_t}\notin E_B$, using Lemma~\ref{lem:forest}, $e_{j_t}$ will either connect two trees or form an even cycle in one tree. Both cases create a connected component $\Gamma$ in $E_{B\cup \{e_{j_t}\}}$ with the number of edges being one more than the number of nodes. Since $e_{j_t}\in E_{\Gamma}$, it suffices to show that $m_1$ is on $\Gamma$.\par
If $m_1$ is not on $\Gamma$, then all the edges on $\Gamma$ are contained in $E_D$ since $D=B\cup \{j_t\}\setminus\{1\}$. By Corollary~\ref{obs:almost-feasible-v-i}, any edge in $E_D$ is disliked in $D$ by either $m_1$ or one of its endpoints. Notice that on $\Gamma$ no edge can be disliked in $D$ by $m_1$ since we have the induction hypothesis (ii). Thus, any edge on $\Gamma$ is disliked in $D$ by one of the nodes also on $\Gamma$, which is impossible since the number of edges does not match the number of nodes on $\Gamma$.\hfill $\small \blacksquare$\par
\medskip

The following shows that Lemma~\ref{unmatchedwoman} holds even though we do not have $u_1\in\mathcal{L}$.
\smallskip

\begin{claim}\label{cl:unmatchedwoman}
Either $B'=D$ or there exists $\bar w \in W$ such that $(\bar{w},\bar{w}) \in E_{B'} \cap E_{D'}$ and $\bar w$ is not properly matched in $\mu_{B'}$, where $\mu_{B'}$ is the matching corresponding to the feasible basis $B'$.
\end{claim}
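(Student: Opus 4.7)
My plan is to prove Claim~\ref{cl:unmatchedwoman} by induction over iterations during the phase $u_1\in\mathcal{M}$, using as inductive hypothesis that $(m_1,m_1)\in E_B$ with $x_{(m_1,m_1)}=1$, i.e.,~$m_1$ is not properly matched in $\mu_B$. For the base case, which is the first iteration with $u_1\in\mathcal{M}$, I would invoke the analysis of the preceding $u_1\in\mathcal{L}$ phase: Proposition~\ref{prop:separator} forces every intermediate ordinal basis $D$ arising in that phase to contain no edge incident to $m_1$, so no valid edge incident to $m_1$ can ever have entered $B$; hence $m_1$ is still unmatched at the transition. Once the inductive hypothesis propagates from $(B,D)$ to $(B',D')$, the conclusion of the claim follows almost immediately: by $|M|=|W|$ and the fact that $m_1$ is unmatched in $\mu_{B'}$, some woman $\bar w$ must also be unmatched in $\mu_{B'}$, so $(\bar w,\bar w)\in E_{B'}$; and since we are in the non-terminating branch we have $B'\setminus\{1\}=B'\cap D'\subseteq D'$, so the loop $(\bar w,\bar w)$, being distinct from column $1$, also lies in $D'$, yielding $(\bar w,\bar w)\in E_{B'}\cap E_{D'}$ as required.

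The inductive step therefore reduces to showing that either $B'=D$ or $(m_1,m_1)$ survives the cardinal pivot with value $1$. I would let $T$ be the tree of $G_B$ containing $m_1$; by Lemma~\ref{lem:forest} combined with the inductive hypothesis, the unique loop of $T$ is $(m_1,m_1)$. By Claim~\ref{cl:Tispivoting}, $e_{j_t}$ is incident to $T$, so exactly one of the two scenarios of Lemma~\ref{lem:cardpivot} occurs: either (I) $e_{j_t}$ joins $T$ with another tree $T'$ of $G_B$ (whose single loop is some $(v,v)$), or (II) $e_{j_t}$ creates an even cycle $Q$ inside $T$ consisting purely of valid edges. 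In Case (II), the cardinal pivot alters only $x$-values along $Q$; loops are unaffected, so $(m_1,m_1)\in E_{B'}$ with $x'_{(m_1,m_1)}=1$ and the inductive hypothesis is restored. In Case (I), the set $B\cup\{e_{j_t}\}\setminus\{(m_1,m_1)\}$ is a forest with single loops (the component merging $T$ and $T'$ keeps $(v,v)$ as its only loop) and so by Lemma~\ref{lem:forest} is a basis; thus $(m_1,m_1)$ is a legitimate leaving column. Since Algorithm~\ref{alg:pivoting} sets $i=1$ whenever $u_1\in\mathcal{M}$, the rule forces $(m_1,m_1)$ to leave the basis, so column $1$ exits $B$ and by Lemma~\ref{lem:scarf-halts-2} we obtain $B'=D$.

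The step I expect to be the main obstacle is checking carefully that the cardinal pivot choices prescribed by Algorithm~\ref{alg:pivoting} in Case (I) are simultaneously \emph{feasible} pivots (in the linear-algebraic sense) and compatible with the forest-with-single-loops structure in this $u_1\in\mathcal{M}$ regime. This is essentially a port of Lemma~\ref{lem:cardpivot} from the $u_1\in\mathcal{L}$ setting to the present one, with $m_1$ playing the role previously held by the separator $m_i$. The adaptation should transfer with little change, because the only input that the original proof takes from the separator structure is the fact that the separator's loop is the unique loop of its tree in $G_B$, and in our setting this fact is supplied by the inductive hypothesis together with the constraints on $D$ recorded in Lemma~\ref{lem:m1lemma}(i)--(ii).
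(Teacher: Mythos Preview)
Your overall strategy coincides with the paper's: both maintain the invariant $x_{(m_1,m_1)}=1$ across the $u_1\in\mathcal{M}$ phase and then use $|M|=|W|$ to produce the unmatched woman $\bar w$. The base case, the deduction that $(\bar w,\bar w)\in E_{B'}\cap E_{D'}$ once the invariant holds at $B'$, and your Case~(II) are all correct.

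The gap is in Case~(I). You argue that $B\cup\{e_{j_t}\}\setminus\{(m_1,m_1)\}$ is a forest with single loops and hence, by Lemma~\ref{lem:forest}, a basis; from this you conclude that $(m_1,m_1)$ is a ``legitimate leaving column'' and that Algorithm~\ref{alg:pivoting} must force it out, giving $B'=D$ in every instance of Case~(I). But Lemma~\ref{lem:forest} only certifies linear independence, not feasibility: a column $j_\ell$ is a candidate to leave in a cardinal pivot only if the resulting basis represents the uniquely determined post-pivot vertex $x'$. When the path $P$ joining the two loops is not $x$-augmenting---exactly the degenerate sub-case labelled ``Case~2'' in the proof of Lemma~\ref{lem:cardpivot}---we have $x'=x$, hence $x'_{(m_1,m_1)}=1$, and $(m_1,m_1)$ is \emph{not} a candidate to leave. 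Algorithm~\ref{alg:pivoting} then falls through to its else-branch and removes a woman-disliked edge of $P$ with $x$-value $0$; the loop $(m_1,m_1)$ survives in $B'$ with value $1$, and the invariant carries over rather than the algorithm terminating. So Case~(I) must itself be split: if $P$ is $x$-augmenting then indeed $B'=D$, otherwise the inductive hypothesis is restored. The paper sidesteps this geometric re-analysis of $P$ by conditioning directly on whether $x'_{(m_1,m_1)}$ drops to $0$; your promised port of Lemma~\ref{lem:cardpivot}, once carried out in full, would recover the missing sub-case, but as written your Case~(I) conclusion is too strong.
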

\noindent \emph{\underline{Proof of Claim~\ref{cl:unmatchedwoman}}}. 
Let $x,x'$ be the basic feasible solution associated with the basis $B,B'$, respectively. Assume $B'\neq D$. If $x'_{(m_1,m_1)}=1$, then we know the matching $\mu'_B$ does not properly match all agents. Thus, there exists some woman $\bar{w}$ who is also not properly matched. Hence, $x'_{(\bar{w},\bar{w})}=1$ and $(\bar{w},\bar{w})\in E_{B'}$. Since $B'\neq D$, we have $B'\setminus \{1\}\subseteq D'$, which implies $(\bar{w},\bar{w})\in E_{B'}\cap E_{D'}$. It therefore suffices to show that $x'_{(m_1,m_1)}=1$.

Now suppose that we are at the first iteration such that $u_1\in \cal{M}$ and $x'_{(m_1,m_1)}=0$. Then we have $x_{(m_1,m_1)}=1$ and $x'_{(m_1,m_1)}=0$, i.e.,~a non-degenerate cardinal pivot makes the value $x_{(m_1,m_1)}$ decrease. We will show that by Algorithm~\ref{alg:pivoting}, $x_{(m_1,m_1)}$ is a leaving variable, hence, by Lemma~\ref{lem:scarf-halts}, $B'=D$, a contradiction. 

Let $\Gamma$ be the connected component of $G_{B \cup \{j_t\}}$ that contains $e_{j_t}$. 

If $\Gamma$ contains an even cycle, then by Claim~\ref{cl:Tispivoting} $\Gamma$ contains exactly one loop $(m_1,m_1)$. Since the pivoting is non-degenerate, $x_{j_t}$ changes from $0$ to $1$. Since $e_{j_t}$ belongs to an even cycle, the cycle must be $x$-alternating, and the change of weights only happen inside the cycle. 
There is no way to change $x_{(m_1,m_1)}$ from $1$ to $0$ without violating the feasibility. Thus, $x'_{(m_1,m_1)}=0$ is impossible.\par
If, on the other hand, $e_{j_t}$ joins two different trees in $G_B$, then there are two loops on $\Gamma$. The change of weight appearing at $e_{j_t}$ will lead to the changes of weights on the $x$-augmenting path on $\Gamma$, which results in the change of weights on the loops. Therefore, $x'_{(m_1,m_1)}=0$, and by Algorithm~\ref{alg:pivoting}, $B'=D$.\par
Hence, once $x_{(m_1,m_1)}=1$ at some iteration, we have $x'_{(m_1,m_1)}=1$ at all future iterations until termination. Since $1$ is therefore not matched, there must be a woman $\bar w$ that is not matched, concluding the proof. 
\hfill $\small \blacksquare$\par

 

\medskip

To show the correctness of (iii), we can apply the above results and repeat the same arguments in Lemma~\ref{lem:cardpivot} with two subtle modifications as follows:\par
First, for formula (\ref{eq:alter-disliked-path}) in the proof:
$$P=(m_{i_1},m_{i_1}),(m_{i_1},w_{i_1}),(w_{i_1},m_{i_2}),\dots,(m_{i_{\frac{p}{2}}},w_{i_{\frac{p}{2}}}),(w_{i_{\frac{p}{2}}},w_{i_{\frac{p}{2}}}).$$
We assign $m_{i_1}=m_1$, and $w_{i_{\frac{p}{2}}}=\bar{w}$. Then the edges of $P$ are disliked by 
$$none,m_1,w_{i_1},m_{i_2},w_{i_2},\dots,m_{i_{\frac{p}{2}-1}},w_{i_{\frac{p}{2}-1}},m_{i_{\frac{p}{2}}},w_{i_{\frac{p}{2}}}$$
in order (notice that, $(m_{i_1},m_{i_1})=(m_1,m_1)$ does not exist in $E_D$ as $1\notin D$, thus is denoted by ``none''-disliked).\par
Second, for formula (\ref{eq:alter-disliked-cycle}) in the cycle case when $Q$ contains $m_1$:
$$Q=(m_{i_1},w_{i_1}),(w_{i_1},m_{i_2}),\dots,(m_{i_{\frac{p}{2}}},w_{i_{\frac{p}{2}}}),(w_{i_{\frac{p}{2}}},m_{i_1}).$$
Here $m_{i_1}=m_1$ and $w_{i_1}=w$ where $w$ is the woman defined in Lemma~\ref{lem:m1lemma}(ii). The edges on $Q$ are still disliked by $m_1,w_{i_1},m_{i_2},w_{i_2},\dots,m_{i_{\frac{p}{2}-1}},w_{i_{\frac{p}{2}-1}},m_{i_{\frac{p}{2}}},w_{i_{\frac{p}{2}}}$
in this order.\par
Use the remaining arguments in the proof of Lemma~\ref{lem:cardpivot}, we can deduce (iii).\par
To prove (iv), suppose $e_{j_\ell}$ is woman-disliked w.r.t.~$D$. Then $v_{i_\ell}$ is a woman with $c_{i_\ell,j_\ell}<c_{i_\ell,j_r}\in\mathcal{M}$. Thus $e_{j_r}$ is a valid edge and the other endpoint corresponds to a man, which is $v_{i_r}$. Then the utility of $v_{i_\ell}$ increases and that of $v_{i_r}$ decreases.\par
We still need to show that $e_{j^*}$ is a man-disliked valid edge w.r.t.~$D'$. It is man-disliked since $v_{i_r}$ is a man. Now suppose $e_{j^*}$ is a loop, then $e_{j^*}=(v_{i_r},v_{i_r})$. If $v_{i_r}= m_1$, we will terminate the algorithm at $(B',D')$ since $1\in D'$. However, as the loop $(m_1,m_1)$ does not leave $E_B$, we know $x'_{(m_1,m_1)}=1$ by our pivoting rule, which implies that $m_1$ is not properly matched by $\mu_{B'}$, thus $\mu_{B'}$ is not a stable matching, a contradiction with Theorem~\ref{thm:existence-sm}. Thus $v_{i_r}\neq m_1$. Since $v_{i_r}$ is a man, $(v_{i_r},\bar{w})$ is a valid edge. If $(v_{i_r},\bar{w})\notin E_{D'}$, then the corresponding column $c_{(v_{i_r},\bar{w})}$ is strictly greater than $u'$. Else if $(v_{i_r},\bar{w})\in E_{D'}$, then this edge cannot be disliked by anyone in $D'$, since $v_{i_r}$ and $\bar{w}$ dislike their loops, and $m_1$ dislikes some edge incident to $m_1$. Both cases contradict with the definition of the dominating basis.\par
Since the entering edge $e_{j^*}$ can never be a loop, we will maintain (i) for $(B',D')$. Also, we know that $u_1$ never increases by (iv), thus $v_{i_\ell}$ is a woman. Therefore (ii) also holds for $(B',D')$.\par
When $1$ leaves $B$, the algorithm terminates. If we are at (v), then we stop the induction.\end{proof}

\subsection{Missing proofs from Section~\ref{sec:not-all-stable}}\label{app:exponentially-many}

In order to prove Claim~\ref{cl:one-man-optimal} and Claim~\ref{cl:intermediate}, let us introduce the classical concept of rotations and related properties. For an extensive treatment of rotations, as well as proofs of basic facts on rotations stated here, we refer to~\cite{gusfield1989stable}. Given a stable matching $\mu$, a \emph{$\mu$-alternating cycle} is a cycle (in the classical sense) whose edges alternatively belong to $\mu$ and to $E\setminus \mu$. Let 
\begin{equation}\label{eq:rho} \rho = m_{i_0},e_0,w_{i_0'},e_0',m_{i_1},e_1,w_{i_1'},\dots, m_{i_{\ell-1}},e_{\ell}, w_{i_{\ell-1}'}\end{equation}
be a $\mu$-alternating cycle. We say that $\rho$ is a \emph{rotation exposed at $\mu$} if, for $j=0,\dots,\ell-1$,
\begin{enumerate}
    \item $m_{i_j}$ is matched to $w_{i_j}$ in $\mu$, and 
    \item $w_{i_{j+1}}$ is the first woman according to $m_{i_{j}}$'s preference list who prefers $m_{i_{j}}$ to her partner in $\mu$,  
\end{enumerate}
where indices are taken modulo $\ell$.

Rotations allow us to move from a stable matching to the other. More formally, by defining the symmetric difference operator $\Delta$, the following holds.

\begin{lemma}\label{lem:rotations-move}
Let $\rho$ be a rotation exposed at some stable matching $\mu$. Then $\mu':=\mu \Delta E_\rho$ is a stable matching. Moreover, for each man $m \in M$, either $m$ is matched to the same woman in $\mu$ and $\mu'$, or he prefers his assigned partner in $\mu$ to his assigned partner in $\mu'$. 
\end{lemma}

\begin{lemma}\label{lem:rotations-all}
Let $\mu$ be a stable matching. Then there is a sequence of stable matchings $\mu_0, \mu_1, \dots, \mu_s$ such that:
\begin{enumerate}
    \item $\mu_0$ is the man-optimal stable matching, while $\mu_s=\mu$;
    \item for $i=1,\dots, s$, there exists a rotation $\rho_i$ exposed at $\mu_{i-1}$ such that $\mu_i = \mu_{i-1} \Delta E_{\rho_i}$.
\end{enumerate}
\end{lemma}

Let us now prove an intermediate fact.

\begin{claim}\label{cl:example-intermediate}
The only rotation exposed at $\mu_0$ is 
$$\rho_0 = m_0, w_0,m_1,w_1,\dots, m_{k-1},w_{k-1}.$$
Moreover, $\mu':=\mu\Delta E_{\rho_0}$ is a stable matching where all men are matched to the second woman in their list. 
\end{claim}

\noindent \emph{\underline{Proof of Claim~\ref{cl:example-intermediate}}}. Since the instance has the stable matching $\mu_z\neq \mu_0$, there must be at least one rotation $\rho$ as in~\eqref{eq:rho} exposed in $\mu$ by Lemma~\ref{lem:rotations-all}. Assume w.l.o.g.~that $i_0=i$ for some $i =0,\dots,k-1$. Then, by definition of rotation, ${i_0'}=i$ and $i_1'=i+1$. Consequently, $i_1=i+1$. Iterating we deduce, $\rho=\rho_0$. Moreover, all men and all women are contained in $\rho$. Since all rotations exposed at a stable matching are vertex-disjoint (see again~\cite{gusfield1989stable}), $\rho_0$ is the only rotation exposed at $\mu_0$. 

Stability of $\mu'$ follows from Lemma~\ref{lem:rotations-move} and the fact that each man is matched to the second woman in his list by the definition of symmetric difference. \hfill $\small \blacksquare$\par
\medskip

We are now ready to prove Claim~\ref{cl:one-man-optimal} and Claim~\ref{cl:intermediate}. 

\medskip

\noindent \emph{\underline{Proof of Claim~\ref{cl:one-man-optimal}}}.  By Lemma~\ref{lem:rotations-all}, all stable matchings can be obtained from $\mu_0$ by iteratively taking the symmetric difference with rotations from a sequence, and each partial sequence of symmetric differences creates a  stable matching. By Claim~\ref{cl:example-intermediate}, the only rotation exposed in $\mu_0$ is $\rho_0$. Hence, all stable matchings other than $\mu_0$ can be obtained from $\mu'$ by a sequence of rotations eliminations. By Lemma~\ref{lem:rotations-move}, no man improves their partner when the symmetric difference with an exposed rotation is taken. Hence, in no stable matching other than $\mu_0$ a man has a partner he prefers to his partner in $\mu'$. Using again Claim~\ref{cl:example-intermediate}, we know that $\mu'$ is not $v$-optimal for any $v$. We conclude that $\mu_0$ is the only $m$-optimal matching for any man $m$.

To conclude the proof of the claim, it suffices to show that $\mu_z$ is the only $w$-optimal stable matching for any woman $w$. This follows by observing that $$m_i \leftrightarrow w_{i+\frac{k}{2}+2} \hbox{ for $i=0,\dots, k-1$}$$
defines a bijection between our class of instances and the instances where role of men and women as swapped (as usual, indices are taken modulo $k$). That is, the preference lists of the new instance are, for $i \in \{0,\dots, k-1\}$:
$$
w_i : m_i \succ m_{i+1} \succ m_{i+\frac{k}{2}} \succ m_{i+\frac{k}{2}+1} \quad \hbox{ and } \quad m_i : w_{i-\frac{k}{2}-2} \succ w_{i-\frac{k}{2}-1} \succ w_{i-1} \succ w_{i}.$$
The thesis then follows by the first part of the proof. \hfill $\small \blacksquare$\par
\medskip

\noindent \emph{\underline{Proof of Claim~\ref{cl:intermediate}}}. Consider again matching $\mu'$ defined in Claim~\ref{cl:example-intermediate}. For $i=0,\dots, \frac{k}{2}-1$, we have that
$$\rho_i:= m_{i}, w_{{i+1}}, m_{i+\frac{k}{2}}, w_{i+\frac{k   }{2}+1}
$$ is a rotation exposed in $\mu'$. It is well-known (see again~\cite{gusfield1989stable}) that if $\rho,\rho'$ are rotations exposed in a stable matching $\mu$, then $\rho'$ is a rotation exposed in $\mu \Delta \rho$. Hence, for each $S$ as in the hypothesis of the claim, we have that $(((\mu'\Delta \rho_{i_1}) \Delta \rho_{i_2})\dots \Delta \rho_{i_{|S|}})$ is a stable matching, where $S=\{\rho_{i_1},\rho_{i_2}, \dots, \rho_{i_{|S|}}\}$. Clearly, all such matchings are distinct. The statement follows. \hfill $\small \blacksquare$\par



\end{appendix}




\end{document}